\documentclass{amsproc}
\usepackage[margin=1.2in,nomarginpar]{geometry}
\usepackage{amssymb}
\usepackage{amsmath}
\usepackage{mathdots}
\usepackage{amsbsy}
\usepackage{amscd}
\usepackage{amsthm}

\usepackage{url}
\textheight 24.5cm
\textwidth 16.3cm
\oddsidemargin 0.in
\evensidemargin 0.in
\topmargin -1.8cm

\usepackage{xcolor}
\usepackage{amsmath,amssymb,amsthm,amsfonts,longtable}
\usepackage[english]{babel}
\usepackage{tikz-cd}
\usetikzlibrary{cd}
\usetikzlibrary{decorations.markings}
\tikzset{negated/.style={
		decoration={markings,
			mark= at position 0.5 with {
				\node[transform shape] (tempnode) {$\times$};
			}
		},
		postaction={decorate}
	}
}
\usepackage{array}
\usepackage[colorlinks,citecolor=red,urlcolor=blue,bookmarks=false,hypertexnames=true]{hyperref} 
\usepackage{multirow}
\usepackage{pbox}
\newtheorem{theorem}{Theorem}
\newtheorem{corollary}[theorem]{Corollary}
\newtheorem{lemma}[theorem]{Lemma}

\newtheorem{remark}[theorem]{Remark}

\newtheorem{example}[theorem]{Example}
\newtheorem{definition}[theorem]{Definition}
\newcommand{\Irr}{\textnormal{Irr}}

\newcommand{\cl}{\textnormal{cl}}
\newcommand{\nil}{\textnormal{c}}
\newcommand{\cd}{\textnormal{cd}}
\newcommand{\nl}{\textnormal{nl}}
\newcommand{\lin}{\textnormal{lin}}

\newcommand{\gal}{\textnormal{Gal}}
\newcommand{\aut}{\textnormal{Aut}}

\newcommand{\Syl}{\textnormal{Syl}}
\newcommand{\order}{\textnormal{Order}}

\title[]{A Combinatorial Technique for the Wedderburn Decomposition of Rational Group Algebras of Nested GVZ $p$-groups}
\author{Ram Karan Choudhary$^*$}
\address{Indian Institute of Technology, Bhubaneswar, Arugul Campus, Jatni, Khurda-752050, India.}
\email{ramkchoudhary1997@gmail.com}
\author{Sunil Kumar Prajapati}
\address{Indian Institute of Technology, Bhubaneswar, Arugul Campus, Jatni, Khurda-752050, India.}
\email{skprajapati@iitbbs.ac.in}
\thanks{$^{\textbf{*}}$ Corresponding author.
}
\subjclass[2020]{primary 20C05; secondary 20C15, 20D15}
\keywords{Rational group algebras, Wedderburn decomposition, GVZ-groups, nested groups}
\begin{document}
	
\begin{abstract}
	In this article, we present a combinatorial formula for the Wedderburn decomposition of rational group algebras of nested GVZ $p$-groups, where $p$ is an odd prime. Using this formula, we derive an explicit combinatorial expression for the Wedderburn decomposition of rational group algebras of all two-generator $p$-groups of class $2$. Additionally, we provide explicit combinatorial formulas for the Wedderburn decomposition of rational group algebras of certain families of nested GVZ $p$-groups with arbitrarily large nilpotency class. We also classify all nested GVZ $p$-groups of order at most $p^5$ and compute the Wedderburn decomposition of their rational group algebras. Finally, we determine a complete set of primitive central idempotents for the rational group algebras of nested GVZ $p$-groups.
\end{abstract}
	\maketitle

	\section{Introduction} 
	Throughout this paper, all groups are finite. For a group $G$, we write $\Irr(G)$ for the set of irreducible complex characters of $G$. Let $N \trianglelefteq G$ and $\chi \in \Irr(G)$. We say that $\chi$ is \emph{fully ramified} over $N$ if $\chi(g)=0$ for all $g \in G \setminus N$. A group $G$ is called of \emph{central type} if there exists $\chi \in \Irr(G)$ such that $\chi$ is fully ramified over $Z(G)$. This class of groups was first introduced by DeMeyer and Janusz~\cite{DJ}, who showed that a group $G$ is of central type if and only if each Sylow $p$-subgroup $\Syl_p(G)$ of $G$ is of central type and $Z(\Syl_p(G)) = Z(G) \cap \Syl_p(G)$. It is also known that groups of central type are solvable (see~\cite{HI}). Central type groups have been studied extensively in the literature, including~\cite{DJ, Espu, Gagola, HI}. Motivated by this definition, one says that a character $\chi \in \Irr(G)$ is of \emph{central type} if $\chi$ vanishes on $G \setminus Z(\chi)$, where 
	\[
	Z(\chi) = \{ g \in G : |\chi(g)| = \chi(1) \}
	\]
	is the center (or quasi-kernel) of $\chi$. Equivalently, $\chi$ is of central type if and only if $\bar{\chi} \in \Irr(G/\ker(\chi))$ is fully ramified over $Z(G/\ker(\chi))$, where $\chi$ is the lift of $\bar{\chi}$ to $G$, i.e., $G/\ker(\chi)$ is a group of central type with faithful character $\bar{\chi}$ (see \cite{LewisGVZ2}). Groups in which every irreducible complex character is of central type are called \emph{GVZ-groups}. GVZ-groups were first investigated in~\cite{Onogroup} under the name \emph{groups of Ono type}. For a group $G$, a conjugacy class $\mathcal{C}$ is said to be of \emph{Ono type} if for every $\chi \in \Irr(G)$ and every $g \in \mathcal{C}$, either $\chi(g)=0$ or $|\chi(g)|=\chi(1)$. A group $G$ is said to be of Ono type if every conjugacy class of $G$ is of Ono type. Groups of Ono type were first introduced by Ono~\cite{Ono}. A group $G$ is called \emph{nested} if for all characters $\chi, \psi \in \Irr(G)$, we have either $Z(\chi) \subseteq Z(\psi)$ or $Z(\psi) \subseteq Z(\chi)$. A GVZ-group $G$ is called a \emph{nested GVZ-group} if $G$ is nested. Note that a nested GVZ-group $G$ is a GVZ-group satisfying $Z(\psi) \subseteq Z(\chi)$ whenever $\chi(1) \leq \psi(1)$ for $\chi, \psi \in \Irr(G)$ (see \cite[Lemma 7.1]{LewisGVZ2}). Moreover, such a group $G$ is strictly nested by degrees, i.e.,
	\[
	Z(\psi) \subset Z(\chi) \quad \Longleftrightarrow \quad \chi(1) < \psi(1), \quad \text{for } \chi, \psi \in \Irr(G).
	\]
    A group $G$ has an irreducible character $\chi$ with $\chi(1) = |G/Z(\chi)|^{1/2}$
	if and only if $\chi(g)=0$ for all $g \in G \setminus Z(\chi)$ (see \cite[Corollary 2.30]{I}). Thus, the latter definition of a nested GVZ-group is equivalent to the one used by Nenciu in~\cite{NenciuGVZ, NenciuGVZ2}, which was motivated by problems posed by Berkovich~\cite{Berkovich} (Research Problems 24 and 30 therein). Moreover, a nested GVZ-group is nilpotent and isomorphic to the direct product of a nested GVZ $p$-group and an abelian group, where $p$ is a prime (see \cite[Corollary~2.5]{NenciuGVZ}). For further background on such groups, see~\cite{Burkett2, LewisGVZ, LewisGVZ3, Burkett, LewisGVZ2, NenciuGVZ, NenciuGVZ2}. In this article, we investigate the Wedderburn decomposition of rational group algebras of nested GVZ $p$-groups, where $p$ is an odd prime.

	Let $G$ be a finite group, and let $\mathbb{F}$ a field. According to the Wedderburn-Artin theorem, the group algebra $\mathbb{F}G$ is semisimple if and only if it decomposes into a direct sum of matrix algebras over division rings. More precisely, there is an isomorphism
	\[
	\mathbb{F}G \cong \bigoplus_{i=1}^{r} M_{n_i}(D_i),
	\]
	where each $M_{n_i}(D_i)$ is a full matrix ring of size $n_i$ over a division ring $D_i$ that is finite-dimensional over its center. These rings $M_{n_i}(D_i)$ are called the \emph{simple components} of $\mathbb{F}G$. Furthermore, by the Brauer-Witt theorem (see \cite{Yam}), each simple component is Brauer equivalent to a cyclotomic algebra. The Wedderburn decomposition of rational group algebras has garnered significant attention in recent years due to its importance in understanding various algebraic structures (see \cite{Herman, Jes-Rio, Rit-Seh}). Recent studies have employed Shoda pair theory to analyze this decomposition, as documented in several works including \cite{BM14, BGO, Jes-Lea-Paq, Jes-Olt-Rio, Olt07}. Mathematical software implementations, most notably the \textsc{Wedderga} package in {\sf GAP}~\cite{Gap}, have been developed based on these methods. However, exact computations often remain challenging, especially for groups of large order. For finite abelian groups, Perlis and Walker \cite{PW} provided a well-known combinatorial formula for the Wedderburn decomposition of their rational group algebras.
	
	\begin{theorem}\cite[Theorem 1]{PW}\label{Perlis-walker}
		Let $G$ be a finite abelian group of exponent $m$. Then the Wedderburn decomposition of $\mathbb{Q}G$ is given by
		\[\mathbb{Q}G \cong \bigoplus_{d|m} a_d \mathbb{Q}(\zeta_d),\]
		where $a_d$ is equal to the number of cyclic subgroups of $G$ of order $d$.
	\end{theorem}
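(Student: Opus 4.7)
The plan is to proceed in two steps: first establish the formula for cyclic groups by direct factorization of $x^n - 1$, then promote it to arbitrary finite abelian groups by analyzing the $\gal(\overline{\mathbb{Q}}/\mathbb{Q})$-action on irreducible characters.

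For the cyclic case $G = C_n = \langle g \rangle$, the assignment $g \mapsto x$ gives the algebra isomorphism $\mathbb{Q}C_n \cong \mathbb{Q}[x]/(x^n - 1)$. The factorization $x^n - 1 = \prod_{d \mid n} \Phi_d(x)$ into $\mathbb{Q}$-irreducible cyclotomic polynomials, together with the Chinese Remainder Theorem, yields
\[
\mathbb{Q}C_n \;\cong\; \bigoplus_{d \mid n} \mathbb{Q}[x]/(\Phi_d(x)) \;\cong\; \bigoplus_{d \mid n} \mathbb{Q}(\zeta_d),
\]
which matches the claim since $C_n$ has exactly one cyclic subgroup of each order $d \mid n$.

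For arbitrary abelian $G$ of exponent $m$, I would use the fact that the simple components of $\mathbb{Q}G$ are in bijection with the $\gal(\mathbb{Q}(\zeta_m)/\mathbb{Q}) \cong (\mathbb{Z}/m\mathbb{Z})^{\times}$-orbits on $\Irr(G)$, and that the component attached to the orbit of a linear character $\chi$ is isomorphic to $\mathbb{Q}(\chi) = \mathbb{Q}(\zeta_{o(\chi)})$, where $o(\chi)$ denotes the order of $\chi$ in $\hat{G}$. Since the Galois action sends $\chi$ to $\chi^k$ for $k$ coprime to $o(\chi)$, two characters of order exactly $d$ lie in the same orbit if and only if they generate the same cyclic subgroup of $\hat{G}$ of order $d$. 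Hence the multiplicity of $\mathbb{Q}(\zeta_d)$ as a simple component of $\mathbb{Q}G$ equals the number of cyclic subgroups of $\hat{G}$ of order $d$. The canonical isomorphism $\hat{G} \cong G$ for finite abelian groups preserves the number of cyclic subgroups of each order, producing $a_d$.

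The main obstacle is making the above correspondence precise with the correct field identification. The cleanest route is to form, for each Galois orbit $[\chi]$, the idempotent
\[
e_{[\chi]} \;=\; \sum_{\psi \in [\chi]} \frac{1}{|G|}\sum_{g \in G} \psi(g^{-1})\, g \;\in\; \mathbb{C}G,
\]
verify that it actually lies in $\mathbb{Q}G$ (its coefficients are sums over a Galois orbit of algebraic numbers, hence rational), and check directly that $\mathbb{Q}G\, e_{[\chi]}$ is a field generated over $\mathbb{Q}$ by the image of any $g \in G$ on which $\chi$ takes a primitive $o(\chi)$-th root of unity, and is therefore isomorphic to $\mathbb{Q}(\zeta_{o(\chi)})$. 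An alternative, more elementary inductive route uses the invariant factor decomposition of $G$ together with the formula $\mathbb{Q}(\zeta_a) \otimes_{\mathbb{Q}} \mathbb{Q}(\zeta_b) \cong \varphi(\gcd(a,b))\cdot \mathbb{Q}(\zeta_{\mathrm{lcm}(a,b)})$; however, tracking the counts $a_d$ across tensor products is the combinatorial heart of the argument and is where the real subtlety lies.
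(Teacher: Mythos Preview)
The paper does not give its own proof of this statement; Theorem~\ref{Perlis-walker} is quoted verbatim from Perlis--Walker \cite{PW} as a background result, so there is no in-paper argument to compare your proposal against.

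That said, your outline is correct. The Galois-orbit step you describe is essentially the content of Lemma~\ref{lemma:Ayoub} (Ayoub), which the paper also cites without proof: two linear characters are Galois conjugate over $\mathbb{Q}$ if and only if they have the same kernel, and the number of such classes with $\mathbb{Q}(\chi)=\mathbb{Q}(\zeta_d)$ equals the number $a_d$ of cyclic subgroups of order $d$. Your phrasing via cyclic subgroups of $\hat{G}$ is an equivalent way to see the same count; the paper's later arguments (e.g.\ Lemma~\ref{lemma:GVZgaloisconjugates}) work instead with kernels directly, which avoids the passage through the dual group.

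One small correction: the isomorphism $\hat{G}\cong G$ for finite abelian $G$ is not canonical (only the double-dual isomorphism $G\cong\hat{\hat{G}}$ is natural). This does not affect your argument, since any isomorphism of finite abelian groups preserves the number of cyclic subgroups of each order, and that is all you use.
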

	 A non-abelian group $G$ is called a \emph{VZ-group} if every $\chi \in \nl(G)$ is fully ramified over $Z(G)$. It is easy to see that a VZ-group is a special case of a nested GVZ-group. In \cite{Ram}, we formulate the computation of the Wedderburn decomposition of rational group algebra of a VZ $p$-group $G$ (where $p$ is an odd prime), solely based on computing the number of cyclic subgroups of $G/G'$, $Z(G)$ and $Z(G)/G'$, which is similar to Theorem~\ref{Perlis-walker}.
	 \begin{theorem}\cite[Theorem 1]{Ram}\label{lemma:Wedderburn VZ}
	 	Let $G$ be a finite VZ $p$-group, where $p$ is an odd prime. Let $m$ and $m'$ denote the exponents of $Z(G)$ and $Z(G)/G'$, respectively. Then the Wedderburn decomposition of $\mathbb{Q}G$ is given by
	 	$$\mathbb{Q}G \cong \mathbb{Q}(G/G')\;\bigoplus_{\substack{d\mid m\\ d \nmid m'}} a_dM_{|G/Z(G)|^{1/2}}\!\left(\mathbb{Q}(\zeta_d)\right)\;\bigoplus_{\substack{d|m\\ d|m'}}(a_d-a_d')M_{|G/Z(G)|^{1/2}}\!\left(\mathbb{Q}(\zeta_d)\right),$$
	 	where $a_d$ and $a_d'$ are the number of cyclic subgroups of order $d$ of $Z(G)$ and $Z(G)/G'$, respectively.
	 \end{theorem}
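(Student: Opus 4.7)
I would begin by decomposing $\Irr(G) = \Irr(G/G') \sqcup \nl(G)$. The linear block factors through $G/G'$ and contributes $\mathbb{Q}(G/G')$ directly to $\mathbb{Q}G$. Since $G$ is a VZ $p$-group, it is well known that $G$ has nilpotency class $2$ (so $G' \leq Z(G)$), and every $\chi \in \nl(G)$ satisfies $\chi(1)^2 = [G:Z(G)]$; this follows from $\chi$ vanishing off $Z(G)$ together with the orthogonality relation $\sum_g |\chi(g)|^2 = |G|$.

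Next I would set up the key bijection
\[
\nl(G) \;\longleftrightarrow\; \{\, \lambda \in \Irr(Z(G)) : G' \not\subseteq \ker(\lambda) \,\}, \qquad \chi \longmapsto \lambda_\chi,
\]
where $\lambda_\chi$ is determined by Schur's lemma via $\chi|_{Z(G)} = \chi(1)\,\lambda_\chi$. The inverse sends $\lambda$ to the unique irreducible constituent of $\mathrm{Ind}_{Z(G)}^{G} \lambda$, which must be non-linear because $\lambda|_{G'} \neq 1$ forbids extension to a linear character of $G$. The counting identity $|\nl(G)| = |Z(G)| - |Z(G)/G'|$, extracted from $|G| = |G/G'| + |\nl(G)|\cdot [G:Z(G)]$, confirms bijectivity. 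Because $\chi$ vanishes off $Z(G)$ and equals $\chi(1)\lambda_\chi$ on $Z(G)$, one obtains the character-field identification $\mathbb{Q}(\chi) = \mathbb{Q}(\lambda_\chi) = \mathbb{Q}(\zeta_d)$, where $d$ denotes the order of $\lambda_\chi$.

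Then I would count Galois orbits. By Pontryagin duality on the finite abelian group $Z(G)$, the $\gal(\mathbb{Q}(\zeta_d)/\mathbb{Q})$-orbits on order-$d$ characters of $Z(G)$ correspond to subgroups $K \leq Z(G)$ with $Z(G)/K$ cyclic of order $d$, and this count equals $a_d$. Those orbits whose representatives satisfy $G' \subseteq \ker(\lambda)$ correspond exactly to cyclic quotients of order $d$ of $Z(G)/G'$, contributing $a_d'$ (equal to $0$ whenever $d \nmid m'$). Subtracting yields $a_d - a_d'$ Galois orbits in $\nl(G)$ with character field $\mathbb{Q}(\zeta_d)$ when $d \mid m'$, and $a_d$ when $d \mid m$ but $d \nmid m'$.

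Finally, by the classical result of Roquette that every irreducible character of an odd $p$-group has Schur index $1$ over $\mathbb{Q}$, the simple $\mathbb{Q}$-component attached to each such orbit is isomorphic to $M_{\chi(1)}(\mathbb{Q}(\chi)) = M_{|G/Z(G)|^{1/2}}(\mathbb{Q}(\zeta_d))$, and summing over orbits gives the stated decomposition. The hard part will be the orbit-counting step: one must carefully translate the condition $G' \not\subseteq \ker(\lambda)$ into a statement about cyclic subgroups via Pontryagin duality, so that the subtraction $a_d - a_d'$ correctly enumerates the admissible Galois orbits; the Schur-index input, by contrast, is standard.
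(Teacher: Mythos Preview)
Your proposal is correct and follows essentially the same route as the paper. The paper does not prove this particular statement---it is quoted from \cite{Ram}---but the proof it gives for the generalization (Theorem~\ref{thm:WedderburnGVZ}) runs exactly along your lines: the bijection $\nl(G)\leftrightarrow\{\lambda\in\Irr(Z(G)):G'\nsubseteq\ker(\lambda)\}$ is the VZ specialization of Lemma~\ref{lemma:GVZcharacter}, your character-field identification and Galois-orbit count via cyclic subgroups are Lemmas~\ref{lemma:galoisGVZ} and~\ref{lemma:GVZgaloisconjugates} (the Pontryagin-duality step you flag as the hard part is packaged in the paper as Ayoub's Lemma~\ref{lemma:Ayoub}), and the Schur-index input you attribute to Roquette is the paper's Lemma~\ref{lemma:schurindexpgroup}.
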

	 
	 This work extends Theorem~\ref{lemma:Wedderburn VZ} by deriving a combinatorial formula for the Wedderburn decomposition of rational group algebras of nested GVZ $p$-groups, where $p$ is an odd prime. 
	 
	 \begin{theorem}\label{thm:WedderburnGVZ}
	 	Let $G$ be a finite nested GVZ $p$-group, where $p$ is an odd prime.  
	 	Let $\cd(G) = \{p^{\delta_i} : 0 \leq i \leq n,\; 0=\delta_0 < \delta_1 < \delta_2 < \cdots < \delta_n\}$, and for $0 \leq i \leq n$ define $Z_{\delta_i} := Z(\chi)$ for some $\chi \in \Irr_{p^{\delta_i}}(G)$. For $1 \leq r \leq n$, let $m_r$ and $m_r'$ denote the exponents of $Z_{\delta_r}/[Z_{\delta_r}, G]$ and $Z_{\delta_r}/[Z_{\delta_{r-1}}, G]$, respectively. Then the Wedderburn decomposition of $\mathbb{Q}G$ is given by
	 	\[
	 	\mathbb{Q}G \cong \mathbb{Q}(G/G')  
	 	\;\;\bigoplus_{r=1}^n \;\;\bigoplus_{\substack{d_r \mid m_r \\ d_r \nmid m_r'}} 
	 	a_{d_r}\, M_{p^{\delta_r}}\!\left(\mathbb{Q}(\zeta_{d_r})\right)
	 	\;\;\bigoplus_{r=1}^n \;\;\bigoplus_{\substack{d_r \mid m_r \\ d_r \mid m_r'}} 
	 	\bigl(a_{d_r} - a'_{d_r}\bigr)\, M_{p^{\delta_r}}\!\left(\mathbb{Q}(\zeta_{d_r})\right),
	 	\]
	 	where $a_{d_r}$ and $a'_{d_r}$ denote the number of cyclic subgroups of order $d_r$ in $Z_{\delta_r}/[Z_{\delta_r}, G]$ and $Z_{\delta_r}/[Z_{\delta_{r-1}}, G]$, respectively.
	 \end{theorem}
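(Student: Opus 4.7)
The plan is to establish Theorem~\ref{thm:WedderburnGVZ} by parameterizing the irreducible characters of $G$ of each degree $p^{\delta_r}$ via linear characters of the corresponding center $Z_{\delta_r}$, identifying the simple component attached to each Galois orbit, and then counting the orbits via Perlis--Walker-type duality. I would begin with the classical fact that for any $\chi \in \Irr(G)$, $Z(\chi)/\ker(\chi)$ is cyclic and $\chi|_{Z(\chi)} = \chi(1)\lambda_\chi$ for a linear character $\lambda_\chi$ of $Z(\chi)$. Since $G$ is nested GVZ, every $\chi \in \Irr_{p^{\delta_r}}(G)$ satisfies $Z(\chi) = Z_{\delta_r}$ and vanishes off $Z_{\delta_r}$, so $\chi$ is completely determined by $\lambda_\chi$; moreover $\ker(\chi)\cap Z_{\delta_r} = \ker(\lambda_\chi)$.

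The central technical step is to build a bijection between $\Irr_{p^{\delta_r}}(G)$ and the set of linear characters $\lambda$ of $Z_{\delta_r}$ whose kernel contains $[Z_{\delta_r},G]$ but does not contain $[Z_{\delta_{r-1}},G]$. The inclusion $[Z_{\delta_r},G] \subseteq \ker\lambda_\chi$ is immediate from $Z_{\delta_r} \subseteq Z(\chi)$; the non-inclusion of $[Z_{\delta_{r-1}},G]$ follows from the strictly nested property, since $[Z_{\delta_{r-1}},G] \subseteq \ker(\chi)$ would force $Z_{\delta_{r-1}}/\ker(\chi)$ to be central in $G/\ker(\chi)$, yielding $Z_{\delta_{r-1}} \subseteq Z(\chi) = Z_{\delta_r}$ and contradicting $Z_{\delta_r} \subsetneq Z_{\delta_{r-1}}$ (here we use the implicit structural fact $[Z_{\delta_{r-1}},G]\subseteq Z_{\delta_r}$ built into the theorem statement). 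For the reverse direction, given a good $\lambda$ with kernel $K$, one checks $K \trianglelefteq G$ using $[Z_{\delta_r},G] \subseteq K$, verifies that $Z_{\delta_r}/K$ is cyclic and equals $Z(G/K)$ so that $G/K$ is a group of central type with faithful character $\bar\lambda$, and invokes Isaacs' theory of fully ramified characters to produce the unique $\chi \in \Irr(G)$ with $\chi|_{Z_{\delta_r}} = p^{\delta_r}\lambda$.

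Once the bijection is in hand, the simple component attached to the Galois orbit of $\chi$ is read off easily: since $\chi$ vanishes off $Z_{\delta_r}$ and $\chi|_{Z_{\delta_r}} = p^{\delta_r}\lambda_\chi$, the character field is $\mathbb{Q}(\chi) = \mathbb{Q}(\lambda_\chi) = \mathbb{Q}(\zeta_{d_r})$ where $d_r$ is the order of $\lambda_\chi$, and Roquette's theorem on odd $p$-groups forces the Schur index to be $1$, so the corresponding simple component is $M_{p^{\delta_r}}(\mathbb{Q}(\zeta_{d_r}))$. To count the Galois orbits of good characters, I would use the duality $\widehat{A} \cong A$ for finite abelian groups, applied to $A_r := Z_{\delta_r}/[Z_{\delta_r},G]$ and its quotient $Z_{\delta_r}/[Z_{\delta_{r-1}},G]$: Galois orbits of linear characters of order $d_r$ correspond to cyclic subgroups of order $d_r$. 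Subtracting the orbits supported on the smaller quotient yields $a_{d_r}-a'_{d_r}$ when $d_r \mid m_r'$ and $a_{d_r}$ when $d_r \nmid m_r'$. Combining this with Perlis--Walker (Theorem~\ref{Perlis-walker}) applied to $G/G'$ for the $r=0$ (linear-character) contribution gives the stated decomposition.

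The hardest step, I expect, will be the reverse direction of the bijection in the second paragraph: exhibiting, for each good $\lambda$, an irreducible character $\chi$ of $G$ of degree exactly $p^{\delta_r}$ with the prescribed restriction. This requires combining the structural results of Nenciu and Lewis on nested GVZ $p$-groups with Isaacs' theory of fully ramified characters, and in particular verifying the equality $Z(G/K) = Z_{\delta_r}/K$ (rather than just $\supseteq$); the reverse inclusion ultimately rests on the fact that the image of $Z_{\delta_{r-1}}$ in $G/K$ is \emph{non}-central precisely because $[Z_{\delta_{r-1}},G] \not\subseteq K$, together with the strict nesting of the centers.
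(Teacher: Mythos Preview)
Your proposal is correct and follows essentially the same strategy as the paper: parameterize $\Irr_{p^{\delta_r}}(G)$ by linear characters of $Z_{\delta_r}$ trivial on $[Z_{\delta_r},G]$ but not on $[Z_{\delta_{r-1}},G]$, use Schur index $1$ for odd $p$-groups to identify each simple component as $M_{p^{\delta_r}}(\mathbb{Q}(\zeta_{d_r}))$, and count Galois orbits via cyclic subgroups of the two abelian quotients.

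The only substantive difference is organizational. The paper does not attempt to establish the bijection from scratch; it quotes it as Nenciu's theorem (Lemma~\ref{lemma:GVZcharacter}, i.e.\ \cite[Theorem~3.8]{NenciuGVZ}), and then proves two short lemmas (Lemmas~\ref{lemma:galoisGVZ} and~\ref{lemma:GVZgaloisconjugates}) showing that Galois conjugacy of $\chi_\mu$'s is equivalent to equality of kernels and that the orbit count is $a_{d_r}$ or $a_{d_r}-a'_{d_r}$. Your plan to reconstruct the bijection by hand---in particular, to verify $Z(G/K)=Z_{\delta_r}/K$ for the reverse direction---is more laborious than needed, and the reverse inclusion you sketch (``$Z_{\delta_{r-1}}/K$ non-central plus strict nesting'') does not by itself pin down $Z(G/K)$ among all normal subgroups between $Z_{\delta_r}$ and $Z_{\delta_{r-1}}$; you would still need Nenciu's structural input (e.g.\ that the centers of irreducible characters are exactly the $Z_{\delta_i}$) to close this. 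Since you already cite Nenciu and Lewis for structural facts, simply invoking \cite[Theorem~3.8]{NenciuGVZ} for the bijection, as the paper does, is the cleaner route and removes the step you flagged as hardest.
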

	
     Note that a VZ-group has nilpotency class~$2$ (see~\cite{FM}). Furthermore, any nilpotent group of class~$2$ is a GVZ-group (see~\cite[Theorem~2.31]{I}). Hence, any nested group of class~$2$ is necessarily a nested GVZ-group. In particular, all two-generator $p$-groups of class~$2$ are nested GVZ $p$-groups (see~\cite{Nenciu2generators}). However, a two-generator $p$-group of class~$2$ need not be a VZ $p$-group. For example, the groups $\mathrm{SmallGroup}(729,24)$, $\mathrm{SmallGroup}(729,25)$, and $\mathrm{SmallGroup}(729,60)$ in the {\sf GAP} library are two-generator $3$-groups of class~$2$ that are nested GVZ but not VZ (see Example~\ref{example:2generator5}). In this article, using Theorem~\ref{thm:WedderburnGVZ}, we derive explicit combinatorial formulas for the Wedderburn decomposition of rational group algebras of two-generator $p$-groups, where $p$ is an odd prime. 
     
     Moreover, there exist nested GVZ $p$-groups of arbitrarily large nilpotency class. For every $n \geq 1$, Nenciu~\cite{NenciuGVZ2} constructed a family of nested GVZ $p$-groups of order $p^{2n+1}$, exponent~$p$, and class~$n+1$, where $p > n+1$ is prime. Similarly, for every $n \geq 1$, Lewis~\cite{LewisGVZ2} constructed a family of nested GVZ $p$-groups of order $p^{2n+1}$, exponent~$p^{\,n+1}$, and class~$n+1$, where $p$ is an odd prime. In this article, using Theorem~\ref{thm:WedderburnGVZ}, we also derive explicit combinatorial formulas for the Wedderburn decomposition of rational group algebras of these families of nested GVZ $p$-groups, where $p$ is an odd prime. 
     
     Further, we prove Theorem~\ref{thm:isoGVZ}, which shows that for a group, the property of being a GVZ-group (respectively, a nested GVZ-group) is preserved under the notion of \emph{isoclinism}. 
     
     \begin{theorem}\label{thm:isoGVZ}
     	Let $G$ and $H$ be two finite isoclinic groups. If $G$ is a GVZ-group (respectively, a nested GVZ-group), then $H$ is also a GVZ-group (respectively, a nested GVZ-group).
     \end{theorem}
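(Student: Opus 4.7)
The plan is to prove both statements simultaneously by exploiting the standard character correspondence attached to an isoclinism. Fix isomorphisms $\phi: G/Z(G) \to H/Z(H)$ and $\theta: G' \to H'$ realising the isoclinism, so that $\theta([g_1,g_2]) = [h_1,h_2]$ whenever $\phi(g_i Z(G)) = h_i Z(H)$. Note first that every linear character is automatically of central type (its quasi-kernel is the whole group), so the content of the theorem lies in how non-linear characters and their quasi-kernels are transported from $G$ to $H$.

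The core step --- and what I expect to be the main obstacle --- is to establish a degree-preserving bijection
\[
\Lambda:\ \Irr(G)\setminus\lin(G)\ \longrightarrow\ \Irr(H)\setminus\lin(H), \qquad \chi \longmapsto \chi^{*},
\]
with the property that $|\chi(g)| = |\chi^{*}(h)|$ whenever $\phi(gZ(G)) = hZ(H)$. Since $Z(G) \leq Z(\chi)$ for every $\chi$, the function $|\chi|^{2}$ descends to $G/Z(G)$, and one expects this descended function to depend only on the isoclinism class of $G$. Concretely, I would construct $\Lambda$ by associating to each non-linear $\chi \in \Irr(G)$ a projective representation of $G/Z(G)$ whose cocycle is determined by the central character $\omega_{\chi} \in \Irr(Z(G))$, and then transporting this projective datum across $\phi$ by using $\theta$ to match cocycle values on commutators. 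The verification that $\Lambda$ preserves degrees and agrees with $\phi$ on the support of $|\chi|^{2}$ is the principal technical effort and is where most of the work will go.

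Once $\Lambda$ is established, the transfer of properties is formal. Indeed, $\phi$ then restricts to an isomorphism $Z(\chi)/Z(G) \to Z(\chi^{*})/Z(H)$, and $\chi$ vanishes on $G \setminus Z(\chi)$ if and only if $\chi^{*}$ vanishes on $H \setminus Z(\chi^{*})$. Combined with the trivial central-type property of linear characters, this yields that $G$ is a GVZ-group if and only if $H$ is. For the nested property, any containment $Z(\chi_{1}) \subseteq Z(\chi_{2})$ among non-linear characters of $G$ passes through $\phi$ to give $Z(\chi_{1}^{*}) \subseteq Z(\chi_{2}^{*})$ in $H$; since linear characters sit at the top of the quasi-kernel poset in both groups, a total order on quasi-kernels in $G$ transfers to a total order on quasi-kernels in $H$, completing the argument.
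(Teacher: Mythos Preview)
Your plan is sound in principle, but it takes a considerably heavier route than the paper does. The paper avoids characters entirely and instead appeals to purely group-theoretic characterisations: a group $G$ is GVZ if and only if the set $\{[g,x]:x\in G\}$ is already equal to the subgroup $[g,G]$ for every $g\in G$ (equivalently, $\cl_G(g)=g[g,G]$), and a GVZ-group is nested if and only if the family $\{[g,G]:g\in G\}$ is totally ordered by inclusion (both facts cited from Burkett--Lewis). Since an isoclinism carries the set $\{[g,x]:x\in G\}$ bijectively onto $\{[h,y]:y\in H\}$ via $\phi$ (using surjectivity of $\theta$), and $\phi$ preserves containments between such sets, both properties transfer in a couple of lines.

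Your character-theoretic route would also work, but the ``principal technical effort'' you flag --- building the degree- and modulus-preserving bijection $\Lambda$ on non-linear irreducibles via projective representations --- is a genuine theorem in its own right (essentially the character correspondence for isoclinic groups), and you have only sketched how you would approach it. What you gain is a self-contained argument that does not invoke the Burkett--Lewis characterisations; what you lose is brevity and the need to either prove or cite the isoclinic character correspondence carefully. If you pursue your route, be aware that a clean reference for the exact statement you need (preservation of $|\chi|$ pointwise along $\phi$) is not always stated in the literature in that form, so you may end up reproving it.
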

     
     Using Theorem~\ref{thm:isoGVZ}, one can check that there are several isoclinic families of nested GVZ $p$-groups. In this article, we completely classify all nested GVZ $p$-groups of order at most $p^5$ (see Corollary~\ref{cor:isoGVZp^5}), and derive the Wedderburn decomposition of rational group algebras of these groups.
     
     \begin{corollary}\label{cor:isoGVZp^5}
     	Let $G$ be a non-abelian group of order $p^5$, where $p$ is an odd prime. Then $G$ is a nested GVZ-group if and only if $G \in \Phi_{2} \cup \Phi_{5} \cup \Phi_{7} \cup \Phi_{8}$.
     \end{corollary}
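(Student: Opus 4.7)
The plan is to exploit Theorem~\ref{thm:isoGVZ}: since the property of being a nested GVZ-group is an isoclinism invariant, it suffices to test one representative from each isoclinism family of non-abelian groups of order $p^5$. By James' classification, for odd $p$ there are exactly ten such families $\Phi_1,\ldots,\Phi_{10}$, with $\Phi_1$ consisting of the abelian groups. Thus the task reduces to deciding, for each of the nine non-abelian families, whether some chosen representative is a nested GVZ-group.

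First, I would separate the families by nilpotency class. By \cite[Theorem~2.31]{I}, every nilpotent group of class~$2$ is automatically a GVZ-group, so for the class-$2$ families only the nestedness condition needs to be checked: one verifies whether the set $\{Z(\chi):\chi\in\Irr(G)\}$ is totally ordered by inclusion. For the families of nilpotency class at least $3$, both the GVZ property (every $\chi\in\Irr(G)$ vanishes off $Z(\chi)$) and the nestedness condition must be verified.

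Next, for each of the nine non-abelian families I would use James' explicit presentation (equivalently, a representative from the \textsf{GAP} small groups library) to compute $G'$, $Z(G)$, the conjugacy classes, $\cd(G)$, and the quasi-kernels $Z(\chi)$ of the non-linear irreducible characters. The families $\Phi_2$ and $\Phi_5$ contain two-generator $p$-groups of class~$2$, which are nested GVZ by~\cite{Nenciu2generators}, settling those cases immediately. For $\Phi_3$ and $\Phi_4$ one expects to exhibit two non-linear irreducible characters whose quasi-kernels are incomparable, breaking nestedness while the GVZ property still holds. For the higher-class families $\Phi_7$ and $\Phi_8$, a direct check from the presentations should confirm that every non-linear $\chi$ vanishes off $Z(\chi)$ and that the quasi-kernels form a chain under inclusion. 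For $\Phi_6$, $\Phi_9$, and $\Phi_{10}$ one would exhibit a non-linear $\chi$ that either fails to vanish off $Z(\chi)$ (so GVZ fails) or has a quasi-kernel incomparable with another (so nestedness fails).

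The main obstacle is bookkeeping: nine families, each depending on parameters and with possible exceptional behaviour at small primes, must be treated without redundancy. Theorem~\ref{thm:isoGVZ} is exactly what defuses this obstacle, since it reduces each family to a single finite verification on a representative, which can then be carried out uniformly in $p$ from James' parametric presentations. The residual concern is ensuring that the representative chosen from each $\Phi_i$ faithfully captures the class-$2$ versus class-$\ge 3$ dichotomy above, but this is handled automatically because nilpotency class is itself an isoclinism invariant.
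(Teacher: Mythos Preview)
Your overall strategy---reduce to one representative per isoclinism family via Theorem~\ref{thm:isoGVZ} and check case by case---is sound in principle, but the execution has a concrete gap at $\Phi_5$. You assert that $\Phi_5$ contains a two-generator $p$-group of class~$2$ and then invoke~\cite{Nenciu2generators}. This is false. Any two-generator class-$2$ $p$-group of order $p^5$ is some $G_{(\alpha,\beta,\gamma;\rho,\sigma)}$ with $\alpha+\beta+\gamma=5$ and $\alpha\ge\beta\ge\gamma\ge1$, and from the description of $Z(G)$ in Section~\ref{sec:2generator} one has $|G/Z(G)|=p^{2\gamma}$. Groups in $\Phi_5$ have $|Z(G)|=p$, hence $|G/Z(G)|=p^4$, forcing $\gamma=2$; but then $\beta\ge\gamma=2$ gives $\alpha+\beta\ge4>3=5-\gamma$, a contradiction. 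So no two-generator class-$2$ group of order $p^5$ lies in $\Phi_5$, and that branch of your argument does not go through; a separate verification is required there.

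The paper avoids the family-by-family bookkeeping via a different organizing principle. It first uses that a nested GVZ $p$-group with $|\cd(G)|=2$ is exactly a VZ-group, so the six non-abelian families with two character degrees ($\Phi_2,\Phi_3,\Phi_4,\Phi_5,\Phi_6,\Phi_9$) are settled in one stroke by the known VZ classification, which picks out $\Phi_2\cup\Phi_5$. Then $\Phi_{10}$ is eliminated via Lemma~\ref{lemma:nilclassGVZ}: there $\nil(G)=4>3=|\cd(G)|$, so the GVZ property already fails---no character computation needed. Only $\Phi_7$ and $\Phi_8$ require direct work, and even there the argument is structural rather than computational: $G/Z(G)$ is a VZ-group (so the degree-$p$ characters are of central type) and $(G,Z(G))$ is a Camina pair (so the degree-$p^2$ characters are of central type), with nestedness then immediate. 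Your route could be completed with a correct treatment of $\Phi_5$ and the promised explicit checks for the remaining families, but the paper's approach is shorter and sidesteps exactly the parameter-dependent pitfalls you yourself flag at the end.
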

     \noindent Moreover, a non-abelian $p$-group $G$ of order at most $p^{5}$ is a nested GVZ-group that is not a VZ-group if and only if $|G| = p^{5}$ and $G$ belongs to either the isoclinic family $\Phi_{7}$ or the isoclinic family $\Phi_{8}$.
     
     Finally, we present a concise analysis of the primitive central idempotents and their associated simple components in the Wedderburn decomposition of rational group algebras of GVZ $p$-groups. In particular, we prove Theorem~\ref{thm:pciGVZ}.

	\begin{theorem}\label{thm:pciGVZ}
		Let $G$ be a finite nested GVZ $p$-group, where $p$ is an odd prime. Let $\chi \in \nl(G)$ with $N = \ker(\chi)$. Then we have the following.
	    \begin{enumerate}
		   \item $e_{\mathbb{Q}}(\chi)= \epsilon(Z(\chi), N)$. 
				
		   \item $\mathbb{Q}G\epsilon(Z(\chi), N) \cong M_{|G/Z(\chi)|^{1/2}}(\mathbb{Q}(\zeta_{|Z(\chi)/N|}))$.
		\end{enumerate} 
	\end{theorem}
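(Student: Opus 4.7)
The plan is to first exploit the central-type structure of $\chi$ to rewrite $e_{\mathbb{Q}}(\chi)$ as an element supported on $Z(\chi)$, then identify it with $\epsilon(Z(\chi), N)$ via the cyclic structure of $Z(\chi)/N$, and finally invoke Roquette's theorem for the matrix-algebra form. I begin by unpacking the GVZ hypothesis: since $\chi$ is of central type, $\chi$ vanishes on $G \setminus Z(\chi)$ and $|\chi(z)| = \chi(1)$ for $z \in Z(\chi)$. Hence $z \mapsto \chi(z)/\chi(1)$ is a linear character $\lambda$ of $Z(\chi)$ with $\chi|_{Z(\chi)} = \chi(1)\lambda$. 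A direct check gives $\ker(\lambda) = \ker(\chi) = N$ (using $\ker(\chi) \subseteq Z(\chi)$), so $\lambda$ is faithful on $Z(\chi)/N$, which is therefore cyclic; consequently $\mathbb{Q}(\chi) = \mathbb{Q}(\lambda) = \mathbb{Q}(\zeta_{|Z(\chi)/N|})$, and the orthogonality relation $\langle\chi,\chi\rangle=1$ forces $\chi(1)^{2} = [G:Z(\chi)]$.

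For part (1), I will compute the complex primitive central idempotent directly:
\[
e_{\mathbb{C}}(\chi) \;=\; \frac{\chi(1)}{|G|}\sum_{g \in G}\overline{\chi(g)}\,g \;=\; \frac{\chi(1)^{2}}{|G|}\sum_{z \in Z(\chi)}\overline{\lambda(z)}\,z \;=\; \frac{1}{|Z(\chi)|}\sum_{z \in Z(\chi)}\overline{\lambda(z)}\,z.
\]
This is precisely the primitive central idempotent of $\mathbb{C}Z(\chi)$ attached to $\lambda$, viewed inside $\mathbb{C}G$. Applying the same computation to any Galois conjugate $\sigma\chi$ (for which $Z(\sigma\chi) = Z(\chi)$ and $\ker(\sigma\chi) = N$) gives $e_{\mathbb{C}}(\sigma\chi) = e_{\mathbb{C}}(\sigma\lambda)$, so summing over $\sigma \in \gal(\mathbb{Q}(\chi)/\mathbb{Q}) = \gal(\mathbb{Q}(\lambda)/\mathbb{Q})$ yields
\[
e_{\mathbb{Q}}(\chi) \;=\; \sum_{\sigma} e_{\mathbb{C}}(\sigma\lambda),
\]
which, by the Perlis--Walker description of $\mathbb{Q}(Z(\chi)/N)$, equals the primitive central idempotent of $\mathbb{Q}Z(\chi)$ associated with the simple component $\mathbb{Q}(\zeta_{|Z(\chi)/N|})$, namely $\epsilon(Z(\chi), N)$.

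For part (2), the Wedderburn component $\mathbb{Q}G e_{\mathbb{Q}}(\chi)$ has center $\mathbb{Q}(\chi) = \mathbb{Q}(\zeta_{|Z(\chi)/N|})$ and total $\mathbb{Q}$-dimension $\chi(1)^{2}[\mathbb{Q}(\chi):\mathbb{Q}]$. By Roquette's theorem, the Schur index over $\mathbb{Q}$ of every irreducible character of an odd $p$-group is $1$, so this component is the full matrix ring $M_{\chi(1)}(\mathbb{Q}(\chi))$, giving the stated isomorphism via $\chi(1) = |G/Z(\chi)|^{1/2}$. The main obstacle is the Galois-sum identification in part (1); once one observes that $e_{\mathbb{C}}(\chi)$ is supported on $Z(\chi)$, the rest reduces to a routine Perlis--Walker calculation on the cyclic quotient $Z(\chi)/N$, so no genuinely new ingredient is required beyond the central-type structure of $\chi$ and Roquette's Schur-index result.
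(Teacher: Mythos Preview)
Your proof is correct and follows essentially the same route as the paper: both reduce the computation of $e_{\mathbb{Q}}(\chi)$ to that of $e_{\mathbb{Q}}(\lambda)$ for the linear character $\lambda$ of $Z(\chi)$ underlying $\chi$ (the paper calls it $\mu$ and obtains it from Nenciu's structure lemma, while you derive it directly from the central-type property), and then invoke the standard formula $e_{\mathbb{Q}}(\lambda)=\epsilon(Z(\chi),N)$ for linear characters. For part~(2) the paper cites its own Theorem~\ref{thm:WedderburnGVZ}, whereas you argue directly via Roquette's Schur-index theorem and a dimension count; this makes your version slightly more self-contained, but the substance is the same.
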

	
	The structure of the article is as follows. In Section~\ref{sec:preliminaries}, we introduce notation, mostly following standard conventions, together with preliminary results that will be used throughout the paper. Section~\ref{sec:RationalGroupAlgebras} focuses on rational group algebras of nested GVZ $p$-groups, where $p$ is an odd prime, and contains the proof of Theorem~\ref{thm:WedderburnGVZ}. In Section~\ref{sec:2generator}, we discuss two-generator $p$-groups and derive explicit combinatorial formulas for the Wedderburn decomposition of their rational group algebras. Section~\ref{sec:GVZarbitrary} is devoted to the Wedderburn decomposition of rational group algebras corresponding to two families of nested GVZ $p$-groups of arbitrarily large nilpotency class. Section~\ref{sec:isoGVZ} contains the proofs of Theorem~\ref{thm:isoGVZ} and Corollary~\ref{cor:isoGVZp^5}. In this section, we also classify all nested GVZ $p$-groups of order at most $p^5$ and compute the Wedderburn decomposition of their rational group algebras. Finally, Section~\ref{sec:pci} examines the structure of primitive central idempotents in rational group algebras of nested GVZ $p$-groups and provides the proof of Theorem~\ref{thm:pciGVZ}.

	\section{Notation and Preliminaries}\label{sec:preliminaries}
	
	\subsection{Notation} We adopt the following notation, consistent with standard conventions. Unless otherwise specified, $p$ denotes an odd prime. For a finite group $G$, the notation introduced below will be used throughout.
	
	\begin{longtable}{cl}
		$[g, h]$ & $g^{-1}h^{-1}gh$ for $g, h \in G$\\
		$[g, G]$ & $\langle [g, x] : x\in G\rangle$ for $g \in G$\\
		$[N, G]$ & $\langle [n, g] : n \in N, g\in G\rangle$ for $N \trianglelefteq G$\\
		$G'$ & $[G, G]$, i.e., the commutator subgroup of $G$\\
		$\cl_G(g)$ & the conjugacy class of $g\in G$\\
		$\nil(G)$ & the nilpotency class of $G$\\
		$\Syl_p(G)$ &  the Sylow $p$-subgroup of $G$\\
		$|S|$ & the cardinality of a set $S$\\
		$|z|$ & the absolute value of $z \in \mathbb{C}$\\
		$\Irr(G)$ & the set of irreducible complex characters of $G$\\
		$\lin(G)$ & $\{\chi \in \Irr(G) : \chi(1)=1\}$\\
		$\nl(G)$ & $\{\chi \in \Irr(G) : \chi(1) \neq 1\}$\\
		$\Irr_m(G)$ & $\{\chi \in \Irr(G) : \chi(1)=m\}$\\
		$\cd(G)$ & $\{ \chi(1) : \chi \in \Irr(G) \}$\\
		$\mathbb{F}(\chi)$ & the field obtained by adjoining the values $\{\chi(g) : g\in G\}$ to the field $\mathbb{F}$ for some $\chi\in \Irr(G)$\\
		$m_\mathbb{Q}(\chi)$ & the Schur index of $\chi \in \Irr(G)$ over $\mathbb{Q}$\\
		$\Omega(\chi)$ & $m_{\mathbb{Q}}(\chi)\sum_{\sigma \in \gal(\mathbb{Q}(\chi) / \mathbb{Q})}^{}\chi^{\sigma}$ for $\chi \in \Irr(G)$\\
		$\ker(\chi)$ & $\{g \in G: \chi(g)=\chi(1)\}$ for $\chi \in \Irr(G)$\\
		$Z(\chi)$ & $\{g \in G: |\chi(g)|=\chi(1)\}$ for $\chi \in \Irr(G)$\\
		$\Irr(G|N)$ & $\{\chi \in \Irr(G) : N \nsubseteq \ker(\chi)\}$, where $N \trianglelefteq G$\\
		$\chi \downarrow_H$ & the restriction of a character $\chi$ of $G$ on $H$, where $H \leq G$\\
		$\mathbb{F}G$ & the group ring (algebra) of $G$ with coefficients in $\mathbb{F}$\\
		$M_{n}(D)$ & a full matrix ring of order $n$ over the skewfield $D$\\
		$Z(B)$ & the center of an algebraic structure $B$\\
		$\zeta_m$ & an $m$-th primitive root of unity\\
		$C_n$ & the cyclic group of order $n$
	\end{longtable}
	
	\subsection{Preliminaries}
	
	Here, we introduce some fundamental concepts and results that will be used repeatedly throughout this article. For a finite abelian $p$-group $G$, Yeh~\cite{Cyclic subgroups1} obtained an explicit formula for the number of subgroups of a prescribed type, where $p$ is any prime. More recently, an alternative expression was derived in~\cite{Cyclic subgroups2}, using a method based on certain matrices associated with the invariant factor decomposition of $G$. The following lemma provides the precise counting formula for cyclic subgroups.
	
	\begin{lemma}\cite[Theorem~4.3]{Cyclic subgroups2}\label{lemma:CyclicCounting}
		Let $G$ be a finite abelian $p$-group such that $G \cong C_{p^{\alpha_1}} \times C_{p^{\alpha_2}} \times \cdots \times C_{p^{\alpha_k}}$,  
		with $\alpha_1 \leq \alpha_2 \leq \cdots \leq \alpha_k$, where $p$ is a prime. For each $1 \leq \alpha \leq \alpha_k$, denote by $\mathcal{G}_p^k(\alpha)$ the number of cyclic subgroups of order $p^\alpha$ in $G$. Then
		\[
		\mathcal{G}_p^k(\alpha) = \frac{p \, h_p^{k-1}(\alpha) - h_p^{k-1}(\alpha-1)}{p-1},
		\]
		where $h_p^{k-1}(\alpha) = p^{(k-j)\alpha + \alpha_1 + \cdots + \alpha_{j-1}}$ whenever $\alpha_{j-1} \leq \alpha \leq \alpha_j$, for $j \in \{1, 2, \ldots, k\}$ with the convention $\alpha_0 = 0$.
	\end{lemma}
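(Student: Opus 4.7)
The plan is to count cyclic subgroups of order $p^\alpha$ by the standard element-counting argument, and then match the resulting expression with the piecewise definition of $h_p^{k-1}$. Since every cyclic subgroup of order $p^\alpha$ is generated by any of its $p^{\alpha-1}(p-1)$ generators, the count of such subgroups is
$$\mathcal{G}_p^k(\alpha) = \frac{N_\alpha}{p^{\alpha-1}(p-1)},$$
where $N_\alpha$ denotes the number of elements of $G$ of exact order $p^\alpha$. Thus the problem reduces to a clean computation of $N_\alpha$ and an identification of the result with the stated closed form.

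First I would compute $|G[p^\beta]|$, the cardinality of the subgroup $\{x \in G : x^{p^\beta}=1\}$, for an arbitrary $\beta \geq 0$. Using the invariant factor decomposition $G \cong \prod_{i=1}^k C_{p^{\alpha_i}}$, the subgroup $G[p^\beta]$ is the direct product of the $p^\beta$-torsion of each cyclic factor, so
$$|G[p^\beta]| = \prod_{i=1}^k p^{\min(\beta,\alpha_i)},$$
and then $N_\alpha = |G[p^\alpha]| - |G[p^{\alpha-1}]|$. Next, letting $j$ be the unique index with $\alpha_{j-1} \leq \alpha \leq \alpha_j$ (with $\alpha_0=0$), I would split the exponent sum at $i=j$, using $\min(\alpha,\alpha_i)=\alpha_i$ for $i<j$ and $\min(\alpha,\alpha_i)=\alpha$ for $i \geq j$, to obtain
$$|G[p^\alpha]| = p^{(\alpha_1+\cdots+\alpha_{j-1}) + (k-j+1)\alpha} = p^\alpha \cdot p^{(k-j)\alpha + \alpha_1+\cdots+\alpha_{j-1}} = p^\alpha\, h_p^{k-1}(\alpha).$$
Applying the same identity at $\alpha-1$ yields $|G[p^{\alpha-1}]| = p^{\alpha-1}\, h_p^{k-1}(\alpha-1)$, and substituting gives
$$\mathcal{G}_p^k(\alpha) = \frac{p^\alpha\, h_p^{k-1}(\alpha) - p^{\alpha-1}\, h_p^{k-1}(\alpha-1)}{p^{\alpha-1}(p-1)} = \frac{p\, h_p^{k-1}(\alpha) - h_p^{k-1}(\alpha-1)}{p-1},$$
which is exactly the asserted formula.

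The only real obstacle is the piecewise bookkeeping with the index $j$. One must verify that the two formulas for $h_p^{k-1}$ on the adjacent subintervals $[\alpha_{j-1},\alpha_j]$ and $[\alpha_j,\alpha_{j+1}]$ agree at the breakpoint $\alpha=\alpha_j$ (a quick check: evaluating the two expressions at $\alpha = \alpha_j$ both yield $p^{(k-j)\alpha_j+\alpha_1+\cdots+\alpha_{j-1}}$), and one must separately handle the case where $\alpha$ and $\alpha-1$ lie in different subintervals so that the identity $|G[p^{\alpha-1}]| = p^{\alpha-1}\,h_p^{k-1}(\alpha-1)$ remains valid with the correct index $j'$. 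Beyond this accounting, the proof is elementary and amounts to a direct count of $p$-torsion elements in a finite abelian $p$-group.
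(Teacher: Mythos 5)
Your proof is correct and complete. Note that the paper itself offers no proof of this statement: it is imported verbatim as \cite[Theorem~4.3]{Cyclic subgroups2}, and the surrounding text indicates that the source derives it ``using a method based on certain matrices associated with the invariant factor decomposition of $G$.'' Your argument is instead the classical elementary one: each cyclic subgroup of order $p^\alpha$ has exactly $\varphi(p^\alpha)=p^{\alpha-1}(p-1)$ generators, so it suffices to count elements of exact order $p^\alpha$ via $|G[p^\alpha]|-|G[p^{\alpha-1}]|$ with $|G[p^\beta]|=\prod_i p^{\min(\beta,\alpha_i)}$, and then to identify $|G[p^\alpha]|=p^\alpha\,h_p^{k-1}(\alpha)$ by splitting the product at the index $j$ with $\alpha_{j-1}\leq\alpha\leq\alpha_j$. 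You correctly flag and resolve the only delicate points: the consistency of the piecewise definition of $h_p^{k-1}$ at the breakpoints $\alpha=\alpha_j$, and the fact that $\alpha$ and $\alpha-1$ may fall in different subintervals (which is harmless precisely because of that consistency). This elementary route is arguably preferable for a self-contained exposition, while the citation buys the authors brevity and access to the more general subgroup-counting machinery of the source.
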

	
	We next recall some essential results concerning characters and representations that will be required later.
	
	\begin{lemma}\cite[Corollary~10.14]{I}\label{lemma:schurindexpgroup}
		Let $G$ be a $p$-group, where $p$ is an odd prime. Then 
		\[
		m_\mathbb{Q}(\chi) = 1 \quad \text{for all } \chi \in \Irr(G).
		\]
	\end{lemma}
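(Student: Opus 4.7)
The plan is to pass from the global Schur index to local ones via the Hasse-Brauer-Noether-Albert theorem, which gives
\[
m_{\mathbb{Q}}(\chi) = \mathrm{lcm}_v\, m_{\mathbb{Q}_v}(\chi),
\]
where $v$ ranges over the places of $\mathbb{Q}$. The claim then reduces to verifying that $m_{\mathbb{Q}_v}(\chi) = 1$ at every place of $\mathbb{Q}$.

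At the archimedean place, the key point is that $\chi(1)$ is a power of the odd prime $p$, hence odd. Since the real Schur index always belongs to $\{1,2\}$ and must divide $\chi(1)$, we get $m_{\mathbb{R}}(\chi) = 1$; equivalently, by the Frobenius-Schur indicator theorem, the indicator of $\chi$ cannot be $-1$, for otherwise $\chi(1)$ would be even. For a finite place corresponding to a rational prime $q \neq p$, I would invoke the Brauer-Witt theorem to realize $\chi$, up to Brauer equivalence, by a cyclotomic algebra built from a $q$-elementary subgroup of $G$; since $G$ is a $p$-group, no such subgroup can contain an element of order $q$, and the associated $q$-local cyclotomic algebras split over $\mathbb{Q}_q$, giving $m_{\mathbb{Q}_q}(\chi) = 1$.

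The main obstacle is the $p$-adic place, which is precisely the content of Roquette's theorem: for any odd prime $p$ and any finite $p$-group $G$, every $\chi \in \Irr(G)$ satisfies $m_{\mathbb{Q}_p}(\chi) = 1$. The proof exploits the fact that $\gal(\mathbb{Q}_p(\zeta_{p^n})/\mathbb{Q}_p) \cong (\mathbb{Z}/p^n\mathbb{Z})^{\times}$ is cyclic for odd $p$, which forces the cyclotomic algebras produced by the Brauer-Witt reduction to split over $\mathbb{Q}_p$. This cyclicity breaks down at $p = 2$ (the group $(\mathbb{Z}/2^n\mathbb{Z})^{\times}$ is not cyclic for $n \geq 3$), which is exactly why the hypothesis on $p$ cannot be dropped. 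Combining the three cases yields $m_{\mathbb{Q}}(\chi) = 1$, as required.
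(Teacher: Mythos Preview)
The paper does not prove this lemma; it merely cites Isaacs, Corollary~10.14, where the result (Roquette's theorem) is obtained by a direct inductive argument via Clifford theory that works over an arbitrary field of characteristic zero.

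Your local--global framework is a legitimate alternative route, and the treatment of the easy places is essentially correct: the archimedean argument is clean, and at a finite prime $q \neq p$ the conclusion holds, though the precise justification is the hyperelementary form of the Brauer--Witt reduction (the $q$-local index is controlled by $q$-hyperelementary sections, which in a $p$-group are cyclic) or, equivalently, the observation that the relevant cyclotomic algebra is a cyclic crossed product over an unramified extension of $\mathbb{Q}_q$ with a root of unity as parameter, hence has trivial local invariant.

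The real issue is at the prime $p$. What you invoke as ``Roquette's theorem'' is, in its standard formulation, precisely the global statement $m_{\mathbb{Q}}(\chi)=1$ you are asked to establish; its proof works uniformly over any field of characteristic zero, so the $\mathbb{Q}_p$-instance is not a genuinely simpler input available as a black box. You do correctly isolate the decisive structural fact---the cyclicity of $(\mathbb{Z}/p^{n}\mathbb{Z})^{\times}$ for odd $p$---but the passage from this cyclicity to the splitting of the cyclotomic crossed product is the entire content of the theorem, and your proposal names it rather than carries it out. The local--global reduction is thus sound as an organizing device but does not shorten the argument: the $p$-adic case still requires the full Roquette/Isaacs work.
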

	
	Let $G$ be a finite group. Define an equivalence relation on $\Irr(G)$ by Galois conjugacy over $\mathbb{Q}$. Two irreducible characters $\chi, \psi \in \Irr(G)$ are called \emph{Galois conjugates} over $\mathbb{Q}$ if $\mathbb{Q}(\chi) = \mathbb{Q}(\psi)$ and there exists $\sigma \in \gal(\mathbb{Q}(\chi)/\mathbb{Q})$ such that $\chi^\sigma = \psi$.
	
	\begin{lemma}\cite[Lemma~9.17]{I}\label{SC}
		Let $G$ be a finite group and $\chi \in \Irr(G)$. Denote by $E(\chi)$ the Galois conjugacy class of $\chi$ over $\mathbb{Q}$. Then
		\[
		|E(\chi)| = [\mathbb{Q}(\chi) : \mathbb{Q}].
		\]
	\end{lemma}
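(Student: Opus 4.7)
The plan is to recognize $E(\chi)$ as an orbit under a Galois group action and then apply the orbit-stabilizer theorem. I would first observe that every value $\chi(g)$ is a sum of roots of unity of order dividing $|G|$, so $\mathbb{Q}(\chi) \subseteq \mathbb{Q}(\zeta_{|G|})$. Since the cyclotomic extension $\mathbb{Q}(\zeta_{|G|})/\mathbb{Q}$ is abelian, every intermediate field is Galois over $\mathbb{Q}$; in particular $\mathbb{Q}(\chi)/\mathbb{Q}$ is a finite Galois extension, and hence $[\mathbb{Q}(\chi):\mathbb{Q}] = |\gal(\mathbb{Q}(\chi)/\mathbb{Q})|$.

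Next, I would introduce the natural action of $\Gamma := \gal(\mathbb{Q}(\chi)/\mathbb{Q})$ on characters whose values lie in $\mathbb{Q}(\chi)$: for $\sigma \in \Gamma$ and such a character $\psi$, set $\psi^\sigma(g) := \sigma(\psi(g))$ for all $g \in G$. A standard verification, applying $\sigma$ entry-wise to a representation affording $\psi$ (after extending $\sigma$ to a field containing all relevant character values), shows that $\psi^\sigma \in \Irr(G)$. In particular $\chi^\sigma \in \Irr(G)$ with $\mathbb{Q}(\chi^\sigma) = \sigma(\mathbb{Q}(\chi)) = \mathbb{Q}(\chi)$, so $\Gamma$ acts on a set of irreducible characters containing $\chi$. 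By the definition of Galois conjugacy recalled in the excerpt, the $\Gamma$-orbit of $\chi$ equals $E(\chi)$.

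By the orbit-stabilizer theorem, $|E(\chi)| = |\Gamma|/|\mathrm{Stab}_\Gamma(\chi)|$, so it suffices to show that the stabilizer is trivial. If $\sigma \in \Gamma$ satisfies $\chi^\sigma = \chi$, then $\sigma(\chi(g)) = \chi(g)$ for every $g \in G$; hence $\sigma$ fixes the generating set $\{\chi(g) : g \in G\}$ of $\mathbb{Q}(\chi)$ over $\mathbb{Q}$, forcing $\sigma = \mathrm{id}$. Combining these observations yields $|E(\chi)| = |\Gamma| = [\mathbb{Q}(\chi):\mathbb{Q}]$. I do not anticipate a serious obstacle here; the only delicate point is confirming that $\chi^\sigma$ is a bona fide irreducible character rather than merely a class function, and this is handled by the entry-wise Galois action on an irreducible representation affording $\chi$.
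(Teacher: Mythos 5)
Your proof is correct. The paper gives no proof of its own here --- it simply cites the result as Lemma~9.17 of Isaacs --- and your orbit--stabilizer argument is exactly the standard one: $\mathbb{Q}(\chi)\subseteq\mathbb{Q}(\zeta_{|G|})$ makes $\mathbb{Q}(\chi)/\mathbb{Q}$ Galois, the group $\gal(\mathbb{Q}(\chi)/\mathbb{Q})$ acts on $\Irr(G)$ with orbit $E(\chi)$, and the stabilizer of $\chi$ is trivial because the values $\chi(g)$ generate $\mathbb{Q}(\chi)$. The only point worth stating a little more carefully is the verification that $\chi^\sigma\in\Irr(G)$: one should extend $\sigma$ to a field containing the \emph{matrix entries} of a representation affording $\chi$ (e.g.\ to an automorphism of $\overline{\mathbb{Q}}$), not merely the character values, and then irreducibility follows from $[\chi^\sigma,\chi^\sigma]=\sigma([\chi,\chi])=1$; you flag this and it is routine.
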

	
	It follows that distinct Galois conjugacy classes correspond to distinct irreducible rational representations of $G$. Reiner~\cite[Theorem~3]{IR} further described the structure of the simple components in the Wedderburn decomposition of the rational group algebra $\mathbb{Q}G$ associated with these rational representations. We conclude this section by citing his result.
	
	\begin{lemma}\cite[Theorem~3]{IR}\label{Reiner}
		Let $\mathbb{K}$ be a field of characteristic zero and let $\mathbb{K}^*$ denote its algebraic closure. Suppose $T$ is an irreducible $\mathbb{K}$-representation of $G$, extended linearly to a $\mathbb{K}$-representation of $\mathbb{K}G$. Set
		\[
		A = \{ T(x) : x \in \mathbb{K}G \}.
		\]
		Then $A$ is a simple algebra over $\mathbb{K}$, and may be written as $A = M_n(D)$, where $D$ is a division ring. Moreover,
		\[
		Z(D) \cong \mathbb{K}(\chi_i) \quad \text{and} \quad [D : Z(D)] = \big(m_\mathbb{K}(\chi_i)\big)^2 \quad (1 \leq i \leq k),
		\]
		where $U_i$ are irreducible $\mathbb{K}^*$-representations of $G$ affording the characters $\chi_i$, 
		\[
		T = m_\mathbb{K}(\chi_i)\bigoplus_{i=1}^k U_i,
		\]
		with $k = [\mathbb{K}(\chi_i) : \mathbb{K}]$, and $m_\mathbb{K}(\chi_i)$ denoting the Schur index of $\chi_i$ over $\mathbb{K}$.
	\end{lemma}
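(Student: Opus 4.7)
The plan is to derive the structure of $A$ from the Wedderburn--Artin theorem and then extract the remaining invariants by extending scalars to $\mathbb{K}^*$ and invoking Galois descent. Let $V$ denote the representation space of $T$. Since $T$ is irreducible, $V$ is a simple $\mathbb{K}G$-module; by Schur's lemma, $D := \mathrm{End}_{\mathbb{K}G}(V)$ is a finite-dimensional division algebra over $\mathbb{K}$, and by Jacobson's density theorem together with $\dim_{\mathbb{K}} V < \infty$ we have $A = T(\mathbb{K}G) = \mathrm{End}_{D}(V)$. Thus $A \cong M_n(D)$ is a simple $\mathbb{K}$-algebra with $n = \dim_D V$, and $Z(A) = Z(M_n(D)) = Z(D)$; call this field $F$.

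To identify $F$ with $\mathbb{K}(\chi_i)$ and compute $[D:F]$, I would base-change to the algebraic closure. Since $A$ is central simple over $F$ and $F/\mathbb{K}$ is separable (as $\mathrm{char}\,\mathbb{K} = 0$), the \'etale algebra $F \otimes_{\mathbb{K}} \mathbb{K}^*$ splits as $\prod_{\sigma : F \hookrightarrow \mathbb{K}^*} \mathbb{K}^*$, giving
\[
A \otimes_{\mathbb{K}} \mathbb{K}^* \;\cong\; \prod_{\sigma} \bigl(A \otimes_{F,\sigma} \mathbb{K}^*\bigr) \;\cong\; \prod_{i=1}^{k} M_{ns}(\mathbb{K}^*),
\]
where $k = [F:\mathbb{K}]$ and $s^{2} = [D:F]$, each factor being central simple over $\mathbb{K}^*$ and hence a full matrix ring. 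The semisimple $\mathbb{K}^*G$-module $V \otimes_{\mathbb{K}} \mathbb{K}^*$ therefore has exactly $k$ distinct irreducible constituents $U_i$, one per block. A Galois argument---$V \otimes \mathbb{K}^*$ is preserved by $\gal(\mathbb{K}^*/\mathbb{K})$ because $V$ is defined over $\mathbb{K}$, while the $\mathbb{K}$-irreducibility of $V$ forces the $U_i$ to constitute a single Galois orbit---shows that $\{\chi_i\}$ is a $\gal(\mathbb{K}^*/\mathbb{K})$-orbit of size $[\mathbb{K}(\chi_1):\mathbb{K}]$. Matching this with the $k$ blocks yields $F = \mathbb{K}(\chi_i)$ uniformly along the orbit.

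Finally, I would match dimensions to pin down the multiplicity. The unique simple module of the block $M_{ns}(\mathbb{K}^*)$ has $\mathbb{K}^*$-dimension $ns$, which must equal $\chi_i(1)$; and $\dim_{\mathbb{K}^*}(V \otimes \mathbb{K}^*) = \dim_{\mathbb{K}} V = n \cdot [D:\mathbb{K}] = n s^{2} k$. Comparing with $\sum_{i=1}^{k} a\, \chi_i(1) = a k \cdot ns$ (where the multiplicity $a$ is constant along the orbit by Galois equivariance) forces $a = s$. Identifying this common multiplicity with the classical Schur index gives $s = m_{\mathbb{K}}(\chi_i)$, $[D:F] = m_{\mathbb{K}}(\chi_i)^{2}$, and $T \cong m_{\mathbb{K}}(\chi_i)\bigoplus_{i=1}^{k} U_i$ after extension of scalars to $\mathbb{K}^*$. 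The main subtlety is the Galois-descent step verifying that the irreducible $\mathbb{K}^*$-constituents form a single orbit with constant multiplicity; once this is in hand, everything else is a routine chase of dimensions and central simple algebra invariants.
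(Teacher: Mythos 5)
The paper does not prove this statement; it is quoted as Theorem~3 of Reiner's article \cite{IR}, so there is no internal proof to compare against. Your sketch follows the same classical route as Reiner's original argument (Wedderburn--Artin, extension of scalars to $\mathbb{K}^*$, and a Galois-orbit/dimension count) and is essentially correct, modulo two points you compress: the identification $Z(D)\cong\mathbb{K}(\chi_i)$ really goes through the central characters (the class sums span $Z(\mathbb{K}G)$, which surjects onto $Z(A)$ and acts on $U_i$ by $\omega_{\chi_i}$, with $\mathbb{K}(\omega_{\chi_i})=\mathbb{K}(\chi_i)$ because distinct irreducible characters have distinct central characters), and the final step equating the common multiplicity with $m_{\mathbb{K}}(\chi_i)$ is either the definition of the Schur index or requires the standard minimality argument over $\mathbb{K}(\chi_i)$, depending on which definition one starts from.
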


	\section{Rational group algebra}\label{sec:RationalGroupAlgebras}
	In this section, we establish the proof of Theorem~\ref{thm:WedderburnGVZ}.  
	Let $G$ be a nested GVZ-group. Then for every $\chi \in \Irr(G)$, we have  $\chi(1)^2 = |G/Z(\chi)|$ (see \cite[Corollary~2.30]{I}). Moreover, if $\chi, \psi \in \Irr(G)$ satisfy $\chi(1)=\psi(1)$, then $Z(\chi)=Z(\psi)$. Let  
	\[
	\cd(G) = \{p^{\delta_i} : 0 \leq i \leq n, \; 0=\delta_0 < \delta_1 < \cdots < \delta_n\},
	\]  
	and define $Z_{\delta_i}:=Z(\chi)$ for some $\chi \in \Irr_{p^{\delta_i}}(G)$, where $\Irr_{p^{\delta_i}}(G) = \{\chi \in \Irr(G) : \chi(1)=p^{\delta_i}\}$. Note that for each $\delta_r \in \{\delta_1, \dots, \delta_n\}$, the quotient group $Z_{\delta_r}/[Z_{\delta_r}, G]$ is abelian. With this notation, we recall the following result.
	
	\begin{lemma}\cite[Theorem~3.8]{NenciuGVZ}\label{lemma:GVZcharacter}
		Let $G$ be a nested GVZ-group with  $\cd(G)=\{p^{\delta_i} : 0 \leq i \leq n, \; 0=\delta_0 < \delta_1 < \cdots < \delta_n\}$. Then for each $r \in \{1, \dots, n\}$, there is a bijection between the sets  
		\[
		\Irr\big(Z_{\delta_r}/[Z_{\delta_r}, G]  \mid [Z_{\delta_{r-1}}, G]/[Z_{\delta_r}, G]\big)
		\quad \text{and} \quad 
		\Irr_{p^{\delta_r}}(G).
		\]  
		Moreover, for $\bar{\mu} \in \Irr(Z_{\delta_r}/[Z_{\delta_r}, G] \mid [Z_{\delta_{r-1}}, G]/[Z_{\delta_r}, G])$, the associated character $\chi_\mu \in \Irr_{p^{\delta_r}}(G)$ is given by
		\begin{equation}\label{GVZcharacter}
			\chi_\mu(g) =
			\begin{cases}
				p^{\delta_r}\mu(g) & \text{if } g \in Z_{\delta_r}, \\
				0 & \text{otherwise},
			\end{cases}
		\end{equation}
		where $\mu$ denotes the lift of $\bar{\mu}$ to $Z_{\delta_r}$.
	\end{lemma}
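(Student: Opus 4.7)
The plan is to prove the lemma by constructing mutually inverse maps and verifying that they land in the claimed sets, leaning on Clifford-theoretic arguments together with the nested GVZ structure.

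For the forward direction, take $\chi \in \Irr_{p^{\delta_r}}(G)$. Since $G$ is nested, all characters of degree $p^{\delta_r}$ share the same quasi-kernel, so $Z(\chi) = Z_{\delta_r}$, and being a GVZ-group forces $\chi$ to vanish off $Z_{\delta_r}$. For $g \in Z_{\delta_r}$, $\chi(g)/\chi(1)$ is a root of unity, so $\chi\downarrow_{Z_{\delta_r}} = p^{\delta_r}\mu$ for a unique linear $\mu \in \Irr(Z_{\delta_r})$. The inclusion $[Z_{\delta_r}, G] \subseteq \ker(\chi) \subseteq \ker(\mu)$, which follows from the identity $Z(\chi)/\ker(\chi) = Z(G/\ker(\chi))$, shows that $\mu$ descends to a linear character $\bar\mu$ of $Z_{\delta_r}/[Z_{\delta_r}, G]$.

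The critical step is to verify that $\bar\mu$ is non-trivial on $[Z_{\delta_{r-1}}, G]/[Z_{\delta_r}, G]$. I would argue by contradiction: if $\mu$ were trivial on $[Z_{\delta_{r-1}}, G]$, then using $[Z_{\delta_{r-1}}, G] \subseteq Z_{\delta_r}$ (a structural consequence of the nested GVZ hypothesis, since for any $\psi \in \Irr_{p^{\delta_{r-1}}}(G)$ we have $[Z(\psi), G] \subseteq \ker(\psi)$), one obtains an extension $\tilde\mu$ of $\mu$ to a linear character of $Z_{\delta_{r-1}}$ by pulling back a suitable character of $Z_{\delta_{r-1}}/[Z_{\delta_{r-1}}, G]$. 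Inducing $\tilde\mu$ to $G$ and applying Frobenius reciprocity would then force an irreducible constituent of $\mu^G$ of degree $p^{\delta_{r-1}}$; but $\chi$ also lies over $\mu$ with degree $p^{\delta_r} > p^{\delta_{r-1}}$, contradicting the nested GVZ fact that characters over a fixed linear $\mu$ on $Z_{\delta_r}$ all share the same degree.

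For the reverse direction, given $\bar\mu$ in the target set, define $\chi_\mu$ by (\ref{GVZcharacter}). I would check that $\chi_\mu$ is a class function using normality of $Z_{\delta_r}$ in $G$ together with triviality of $\mu$ on $[Z_{\delta_r}, G]$, compute
\[
\langle \chi_\mu, \chi_\mu\rangle_G = \frac{1}{|G|}\sum_{g \in Z_{\delta_r}} p^{2\delta_r}|\mu(g)|^2 = \frac{|Z_{\delta_r}|\,p^{2\delta_r}}{|G|} = 1
\]
from $|G:Z_{\delta_r}| = p^{2\delta_r}$, and conclude $\chi_\mu = \pm\psi$ for some $\psi \in \Irr(G)$; positivity of $\chi_\mu(1) = p^{\delta_r}$ selects the plus sign, so $\chi_\mu \in \Irr_{p^{\delta_r}}(G)$. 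Injectivity of $\chi \mapsto \bar\mu$ is immediate from the explicit formula, and surjectivity follows by matching cardinalities: one verifies $|\Irr_{p^{\delta_r}}(G)| = |Z_{\delta_r}/[Z_{\delta_r}, G]| - |Z_{\delta_r}/[Z_{\delta_{r-1}}, G]|$ by summing the degree identity $\sum_r p^{2\delta_r}|\Irr_{p^{\delta_r}}(G)| = |G|$ telescopically across the chain of centers $Z_{\delta_0} \supset Z_{\delta_1} \supset \cdots \supset Z_{\delta_n}$.

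The main obstacle is the non-triviality step: it is exactly where the nested hypothesis bites, requiring a careful interplay between the strict nesting by degree and the descending chain of commutator subgroups $[Z_{\delta_i}, G]$. The extension-and-induction argument must be handled carefully to avoid a circular appeal to the bijection one is trying to establish, typically via an induction on $r$ or a direct appeal to the structural results on nested GVZ $p$-groups from the Nenciu--Lewis framework.
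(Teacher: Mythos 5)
First, note that the paper does not prove this lemma at all: it is imported verbatim as \cite[Theorem~3.8]{NenciuGVZ}, so there is no in-paper argument to compare against. Judged on its own terms, your reconstruction has the right architecture (restrict to $Z_{\delta_r}$, descend to the abelian quotient, check non-triviality on $[Z_{\delta_{r-1}},G]/[Z_{\delta_r},G]$, then count), but one step fails as written. You compute $\langle\chi_\mu,\chi_\mu\rangle_G=1$ and conclude $\chi_\mu=\pm\psi$ with $\psi\in\Irr(G)$. That inference is only valid for \emph{generalized} characters, i.e.\ $\mathbb{Z}$-linear combinations of irreducibles, and $\chi_\mu$ is not known to be one at this stage: since $\mu$ is $G$-invariant (it kills $[Z_{\delta_r},G]$) and $Z_{\delta_r}\trianglelefteq G$, one has $\mu^G=p^{\delta_r}\chi_\mu$, so $\chi_\mu$ is a priori only a nonnegative combination of irreducibles with coefficients in $\frac{1}{p^{\delta_r}}\mathbb{Z}$, and a rational nonnegative combination of norm $1$ need not be a single irreducible (e.g.\ coefficients $3/5,4/5$). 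The standard repair is to analyze the constituents of $\mu^G$ directly: any $\psi$ over $\mu$ satisfies $\psi\!\downarrow_{Z_{\delta_r}}=\psi(1)\mu$, hence $Z_{\delta_r}\subseteq Z(\psi)$, and the non-triviality of $\mu$ on $[Z_{\delta_{r-1}},G]$ rules out $Z(\psi)=Z_{\delta_s}$ for $s<r$ (since that would put $[Z_{\delta_{r-1}},G]\subseteq[Z(\psi),G]\subseteq\ker\psi$, forcing $\mu$ trivial on it); then $\psi(1)^2=|G/Z_{\delta_r}|$ forces $\mu^G=p^{\delta_r}\psi$ and $\chi_\mu=\psi$. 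Alternatively, your own counting argument already closes the gap if you reorganize: the forward map $\chi\mapsto\bar\mu$ is a well-defined injection into the target set, the telescoping identity $\sum_r p^{2\delta_r}\,|\Irr_{p^{\delta_r}}(G)|=|G|$ together with $[Z_{\delta_n},G]=[Z(G),G]=1$ forces it to be a bijection, and the inverse is then automatically given by formula \eqref{GVZcharacter}. As presented, though, the counting is used only for surjectivity of a map whose well-definedness rests on the flawed norm argument.

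Two smaller points. Your ``critical step'' can be done much more directly: if $[Z_{\delta_{r-1}},G]\subseteq\ker\chi$, then $Z_{\delta_{r-1}}\ker(\chi)/\ker(\chi)$ is central in $G/\ker(\chi)$, so $Z_{\delta_{r-1}}\subseteq Z(\chi)=Z_{\delta_r}$, contradicting strict nesting by degrees; the extension-and-induction detour is workable (the extension $\tilde\mu$ is $G$-invariant, so constituents of $\tilde\mu^G$ have degree at most $p^{\delta_{r-1}}$, yet must coincide with the unique irreducible over $\mu$) but unnecessarily delicate. Finally, you silently use the containments $[Z_{\delta_r},G]\subseteq[Z_{\delta_{r-1}},G]\subseteq Z_{\delta_r}$ and $Z_{\delta_n}=Z(G)$; these do hold for nested GVZ-groups but deserve at least a citation to the Nenciu--Lewis structure results, as they are exactly what makes the statement of the lemma parse.
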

	
	Let $G$ be a finite group, and let $\chi, \psi \in \Irr(G)$ be Galois conjugate over $\mathbb{Q}$. Then necessarily $\ker(\chi)=\ker(\psi)$. The converse is true when $\chi, \psi \in \lin(G)$. However, for general $\chi, \psi \in \nl(G)$ with $\ker(\chi)=\ker(\psi)$, it need not follow that they are Galois conjugates. In the case of nested GVZ-groups, the converse does hold for nonlinear irreducible complex characters, as we show below.
	
	\begin{lemma}\label{lemma:galoisGVZ}
		Let $G$ be a nested GVZ-group with  $\cd(G)=\{p^{\delta_i} : 0 \leq i \leq n, \; 0=\delta_0 < \delta_1 < \cdots < \delta_n\}$. For $\chi, \psi \in \Irr_{p^{\delta_r}}(G)$ with some $r \in \{1, \dots, n\}$, we have the following.
		\[
		\chi \text{ and } \psi \text{ are Galois conjugates over } \mathbb{Q}
		 \Longleftrightarrow  \ker(\chi)=\ker(\psi).
		\]
	\end{lemma}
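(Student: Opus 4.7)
The plan is to dispose of the forward implication quickly (Galois conjugate characters always share the same kernel, since $\ker(\chi)=\{g : \chi(g)=\chi(1)\}$ and $\chi(1)\in\mathbb{Q}$ is fixed by any Galois automorphism), and then concentrate on the nontrivial converse. Assume $\chi,\psi\in\Irr_{p^{\delta_r}}(G)$ with $\ker(\chi)=\ker(\psi)=N$. By Lemma~\ref{lemma:GVZcharacter}, there exist linear characters $\bar{\mu}_\chi,\bar{\mu}_\psi\in\Irr(Z_{\delta_r}/[Z_{\delta_r},G]\mid [Z_{\delta_{r-1}},G]/[Z_{\delta_r},G])$ whose lifts $\mu_\chi,\mu_\psi$ to $Z_{\delta_r}$ satisfy $\chi(g)=p^{\delta_r}\mu_\chi(g)$ on $Z_{\delta_r}$ and $\chi(g)=0$ off $Z_{\delta_r}$ (and similarly for $\psi$).

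The first step is to identify the kernels correctly. Since $\chi$ vanishes outside $Z_{\delta_r}$, we get $\ker(\chi)\subseteq Z_{\delta_r}$, and on $Z_{\delta_r}$ the condition $\chi(g)=\chi(1)$ reduces to $\mu_\chi(g)=1$. Hence $\ker(\chi)=\ker(\mu_\chi)$, and similarly $\ker(\psi)=\ker(\mu_\psi)$. Thus $\mu_\chi$ and $\mu_\psi$ descend to faithful linear characters of the abelian quotient $A:=Z_{\delta_r}/N$. Since an abelian group admits a faithful linear character if and only if it is cyclic, $A$ must be cyclic.

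Now $\mu_\chi$ and $\mu_\psi$ are two faithful linear characters of the cyclic group $A$ of order $n:=|A|$. All such characters form a single Galois orbit over $\mathbb{Q}$: they are precisely the maps sending a generator of $A$ to $\zeta_n^k$ with $\gcd(k,n)=1$, and $\gal(\mathbb{Q}(\zeta_n)/\mathbb{Q})$ acts transitively on these. Pick $\sigma\in\gal(\mathbb{Q}(\mu_\chi)/\mathbb{Q})$ with $\mu_\chi^\sigma=\mu_\psi$. Because $\chi$ and $\mu_\chi$ take the same nonzero values up to the rational factor $p^{\delta_r}$, we have $\mathbb{Q}(\chi)=\mathbb{Q}(\mu_\chi)$ (and similarly for $\psi$); applying $\sigma$ termwise in \eqref{GVZcharacter} yields
\[
\chi^\sigma(g)=p^{\delta_r}\mu_\chi^\sigma(g)=p^{\delta_r}\mu_\psi(g)=\psi(g)\qquad(g\in Z_{\delta_r}),
\]
and $\chi^\sigma(g)=0=\psi(g)$ for $g\notin Z_{\delta_r}$. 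Therefore $\chi^\sigma=\psi$ and $\mathbb{Q}(\chi)=\mathbb{Q}(\psi)$, which is exactly Galois conjugacy over~$\mathbb{Q}$.

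I expect the main conceptual point, rather than an obstacle, to be the observation that $Z_{\delta_r}/N$ is cyclic: this is what converts the hypothesis ``same kernel'' into ``single Galois orbit'' and is the step that truly uses the GVZ structure (via Lemma~\ref{lemma:GVZcharacter}, which forces $\chi$ to be determined by a linear character of $Z_{\delta_r}$ that is faithful modulo $N$). Everything else is bookkeeping with the explicit formula~\eqref{GVZcharacter}.
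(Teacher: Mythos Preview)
Your proof is correct and follows essentially the same route as the paper's: reduce to the associated linear characters $\mu_\chi,\mu_\psi$ of $Z_{\delta_r}$ via Lemma~\ref{lemma:GVZcharacter}, observe $\ker(\chi)=\ker(\mu_\chi)$, deduce that $\mu_\chi$ and $\mu_\psi$ are Galois conjugate linear characters, and transport the Galois automorphism back to $\chi,\psi$ using~\eqref{GVZcharacter} together with $\mathbb{Q}(\chi)=\mathbb{Q}(\mu_\chi)$. The only difference is that you spell out the standard reason why two linear characters with the same kernel are Galois conjugate (the quotient $Z_{\delta_r}/N$ is cyclic, and faithful linear characters of a cyclic group form a single Galois orbit), whereas the paper simply asserts this step.
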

	
	\begin{proof}
		If $\chi$ and $\psi$ are Galois conjugates over $\mathbb{Q}$, then $\ker(\chi)=\ker(\psi)$ is immediate. Conversely, suppose $\chi, \psi \in \Irr_{p^{\delta_r}}(G)$ with $\ker(\chi)=\ker(\psi)$.  
		By definition, $Z_{\delta_i} := Z(\phi)$ for some $\phi \in \Irr_{p^{\delta_i}}(G)$. From \eqref{GVZcharacter}, there exist $\mu, \nu \in \Irr(Z_{\delta_r})$ such that  
		\[
		\chi\!\downarrow_{Z_{\delta_r}} = p^{\delta_r}\mu, \quad 
		\psi\!\downarrow_{Z_{\delta_r}} = p^{\delta_r}\nu,
		\]  
		where $\mu$ and $\nu$ are the lifts of some $\bar{\mu} \in \Irr(Z_{\delta_r}/[Z_{\delta_r}, G] \mid [Z_{\delta_{r-1}}, G]/[Z_{\delta_r}, G])$ to $Z_{\delta_r}$ and $\bar{\nu} \in \Irr(Z_{\delta_r}/[Z_{\delta_r}, G] \mid [Z_{\delta_{r-1}}, G]/[Z_{\delta_r}, G])$ to $Z_{\delta_r}$, respectively.  
		Since $\ker(\chi)=\ker(\mu)$ and $\ker(\psi)=\ker(\nu)$, we deduce that $\mu$ and $\nu$ are Galois conjugates. Hence, $\mathbb{Q}(\mu)=\mathbb{Q}(\nu)$. Therefore, there exists $\sigma \in \gal(\mathbb{Q}(\mu)/\mathbb{Q})$ with $\mu^\sigma=\nu$. As $\mathbb{Q}(\chi)=\mathbb{Q}(\mu)$ and $\mathbb{Q}(\psi)=\mathbb{Q}(\nu)$, it follows that $\chi^\sigma=\psi$. Thus, $\chi$ and $\psi$ are Galois conjugates over $\mathbb{Q}$. This completes the proof of Lemma~\ref{lemma:galoisGVZ}.
	\end{proof}
	
	Now, for $\chi, \psi \in \Irr(G)$, we say that $\chi$ and $\psi$ are \emph{equivalent} if $\ker(\chi)=\ker(\psi)$.
	
	\begin{lemma}\cite[Lemma 1]{Ayoub}\label{lemma:Ayoub}
		Let $G$ be a finite abelian group of exponent $m$, and let $d \mid m$. If $a_d$ denotes the number of cyclic subgroups of $G$ of order $d$, then the number of inequivalent characters $\chi$ with $\mathbb{Q}(\chi)=\mathbb{Q}(\zeta_d)$ is $a_d$.
	\end{lemma}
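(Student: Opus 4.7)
The plan is to translate the problem into counting cyclic subgroups in the Pontryagin dual $\widehat{G}$, and then invoke the self-duality $\widehat{G}\cong G$ of finite abelian groups to transport the count back to $G$.

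First, because $G$ is abelian, every $\chi\in\Irr(G)$ is a homomorphism $G\to\mathbb{C}^{\times}$, so its image is a cyclic group of $e$-th roots of unity for some divisor $e\mid m$. Consequently $\mathbb{Q}(\chi)=\mathbb{Q}(\zeta_e)$, where $e$ coincides with the order of $\chi$ in the dual group $\widehat{G}=\operatorname{Hom}(G,\mathbb{C}^{\times})$ equipped with pointwise multiplication. In particular, the characters $\chi$ with $\mathbb{Q}(\chi)=\mathbb{Q}(\zeta_d)$ are precisely the elements of $\widehat{G}$ of order exactly $d$.

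Next I would verify that two such characters $\chi,\psi$ satisfy $\ker(\chi)=\ker(\psi)$ if and only if $\langle\chi\rangle=\langle\psi\rangle$ in $\widehat{G}$. Setting $N:=\ker(\chi)=\ker(\psi)$, both $\chi$ and $\psi$ descend to faithful characters of the cyclic group $G/N\cong C_d$; any two faithful characters of $C_d$ differ by a power coprime to $d$, so $\langle\chi\rangle=\langle\psi\rangle$. Conversely, every generator of $\langle\chi\rangle$ has the same kernel as $\chi$. Thus the equivalence classes of characters with $\mathbb{Q}(\chi)=\mathbb{Q}(\zeta_d)$ are in bijection with the cyclic subgroups of order $d$ inside $\widehat{G}$.

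Finally, invoking the standard isomorphism $\widehat{G}\cong G$ for finite abelian groups turns this count into the number of cyclic subgroups of order $d$ in $G$, which is $a_d$ by definition. The argument is essentially bookkeeping once the duality dictionary is in place; the step most deserving of care is the equivalence ``same kernel if and only if same cyclic subgroup generated in $\widehat{G}$'', which rests on the classical description of faithful characters of a finite cyclic group.
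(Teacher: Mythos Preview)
Your proof is correct. The paper does not supply its own argument for this lemma; it is quoted verbatim from Ayoub and Ayoub (reference~\cite{Ayoub}) and used as a black box. Your approach via Pontryagin duality is the standard one: identify linear characters with $\mathbb{Q}(\chi)=\mathbb{Q}(\zeta_d)$ as the elements of order $d$ in $\widehat{G}$, show that two such characters share a kernel exactly when they generate the same cyclic subgroup of $\widehat{G}$, and transport the count back to $G$ via $\widehat{G}\cong G$. Each step is justified, and the only point requiring care---that equal kernels force the characters to be powers of one another---you handle correctly by passing to the cyclic quotient $G/N$.
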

	
	Analogous to Lemma~\ref{lemma:Ayoub}, and under the above hypotheses, we now determine the number of inequivalent irreducible characters of a nested GVZ-group corresponding to a given cyclotomic field.
	
	\begin{lemma}\label{lemma:GVZgaloisconjugates}
		Let $G$ be a nested GVZ-group with $\cd(G)=\{p^{\delta_i} : 0 \leq i \leq n, \; 0=\delta_0 < \delta_1 < \cdots < \delta_n\}$. Define $Z_{\delta_i}:=Z(\chi)$ for some $\chi \in \Irr_{p^{\delta_i}}(G)$. For each $r \in \{1, \dots, n\}$, let $d_r$ be a divisor of $\exp(Z_{\delta_r}/[Z_{\delta_r}, G])$. Let $a_{d_r}$ and $a'_{d_r}$ denote the number of cyclic subgroups of order $d_r$ in $Z_{\delta_r}/[Z_{\delta_r}, G]$ and $Z_{\delta_r}/[Z_{\delta_{r-1}}, G]$, respectively. Finally, let $m_{d_r}$ be the number of inequivalent characters $\chi \in \Irr_{p^{\delta_r}}(G)$ with $\mathbb{Q}(\chi)=\mathbb{Q}(\zeta_{d_r})$. Then we have the following.
		\begin{enumerate}
			\item If $d_r \mid \exp(Z_{\delta_r}/[Z_{\delta_r}, G])$ but $d_r \nmid \exp(Z_{\delta_r}/[Z_{\delta_{r-1}}, G])$, then $m_{d_r}=a_{d_r}$.
			\item If $d_r$ divides both $\exp(Z_{\delta_r}/[Z_{\delta_r}, G])$ and $\exp(Z_{\delta_r}/[Z_{\delta_{r-1}}, G])$, then $m_{d_r}=a_{d_r}-a'_{d_r}$.
		\end{enumerate}
	\end{lemma}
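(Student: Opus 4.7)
My plan is to transport the counting from $\Irr_{p^{\delta_r}}(G)$ down to the abelian quotient $A := Z_{\delta_r}/[Z_{\delta_r}, G]$ via the bijection of Lemma~\ref{lemma:GVZcharacter}, and then apply the abelian counting Lemma~\ref{lemma:Ayoub} twice, subtracting off the characters that become trivial on the subgroup $B := [Z_{\delta_{r-1}}, G]/[Z_{\delta_r}, G] \leq A$.

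First I would verify that the bijection $\bar\mu \mapsto \chi_\mu$ from Lemma~\ref{lemma:GVZcharacter} is compatible with the structure we care about. From the explicit formula \eqref{GVZcharacter}, the values of $\chi_\mu$ are $p^{\delta_r}\mu(g)$ or $0$, so $\mathbb{Q}(\chi_\mu) = \mathbb{Q}(\mu) = \mathbb{Q}(\bar\mu)$. Similarly, $\ker(\chi_\mu) = \ker(\mu) = \pi^{-1}(\ker(\bar\mu))$ where $\pi : Z_{\delta_r} \to A$ is the projection, so two such characters $\chi_\mu, \chi_\nu$ have equal kernels in $G$ iff $\ker(\bar\mu) = \ker(\bar\nu)$ in $A$. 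Combining this with Lemma~\ref{lemma:galoisGVZ}, the equivalence relation on $\Irr_{p^{\delta_r}}(G)$ (Galois conjugacy = equality of kernels) transports under the bijection to equality of kernels on $\Irr(A \mid B)$.

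Next I would decompose $\Irr(A) = \Irr(A/B) \sqcup \Irr(A \mid B)$ in the standard way (inflating characters of $A/B$ to $A$). Two such characters in $\Irr(A/B)$ have equal kernels as characters of $A/B$ iff they have equal kernels as characters of $A$, so the partition into equivalence classes restricts cleanly. Applying Lemma~\ref{lemma:Ayoub} to the abelian $p$-group $A$ (of exponent $m_r$) yields exactly $a_{d_r}$ equivalence classes of characters $\bar\mu \in \Irr(A)$ with $\mathbb{Q}(\bar\mu) = \mathbb{Q}(\zeta_{d_r})$, and applying it to $A/B \cong Z_{\delta_r}/[Z_{\delta_{r-1}}, G]$ (of exponent $m_r'$) yields $a'_{d_r}$ equivalence classes with the same field of character values, provided $d_r \mid m_r'$.

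Finally, I split into the two cases. If $d_r \mid m_r$ but $d_r \nmid m_r'$, then no character of $A/B$ can have field $\mathbb{Q}(\zeta_{d_r})$ (its order in the dual would force $d_r \mid m_r'$), so every equivalence class of $\Irr(A)$ with that field of values lies in $\Irr(A \mid B)$, giving $m_{d_r} = a_{d_r}$. If instead $d_r$ divides both $m_r$ and $m_r'$, then the $a_{d_r}$ equivalence classes in $\Irr(A)$ split as $a'_{d_r}$ classes inside $\Irr(A/B)$ and $a_{d_r} - a'_{d_r}$ classes inside $\Irr(A \mid B)$, yielding $m_{d_r} = a_{d_r} - a'_{d_r}$. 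There is no serious obstacle here; the only subtle point to handle carefully is that the equivalence relations on $\Irr_{p^{\delta_r}}(G)$, on $\Irr(A)$, and on $\Irr(A/B)$ all coincide under the inflation-bijection chain, which is exactly what Lemma~\ref{lemma:galoisGVZ} and the kernel computation above ensure.
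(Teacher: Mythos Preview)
Your proposal is correct and follows essentially the same route as the paper: transport the problem to $A = Z_{\delta_r}/[Z_{\delta_r},G]$ via the bijection of Lemma~\ref{lemma:GVZcharacter}, observe that kernels and character fields are preserved, and then apply Lemma~\ref{lemma:Ayoub} to $A$ and to $A/B$ and subtract. The paper's proof is considerably terser (it leaves the subtraction argument and the two-case split implicit), and note that your appeal to Lemma~\ref{lemma:galoisGVZ} is not actually needed here since ``inequivalent'' is already defined as having distinct kernels.
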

	
	\begin{proof}
		Let $\chi \in \Irr_{p^{\delta_r}}(G)$. Then $\chi=\chi_\mu$ for some $\mu \in \Irr(Z_{\delta_r})$, as in \eqref{GVZcharacter}, where $\mu$ is the lift of $\bar{\mu}\in \Irr(Z_{\delta_r}/[Z_{\delta_r}, G] \mid [Z_{\delta_{r-1}}, G]/[Z_{\delta_r}, G])$ to $Z_{\delta_r}$. We observe that $\ker(\chi_\mu)=\ker(\mu)$ and $\mathbb{Q}(\chi_\mu)=\mathbb{Q}(\mu)$. By Lemma~\ref{lemma:GVZcharacter}, there is a bijection between the sets $\Irr(Z_{\delta_r}/[Z_{\delta_r}, G] \mid [Z_{\delta_{r-1}}, G]/[Z_{\delta_r}, G])$ and $\Irr_{p^{\delta_r}}(G)$. Consequently, by applying Lemma \ref{lemma:Ayoub}, the results follow. This completes the proof of Lemma \ref{lemma:GVZgaloisconjugates}.
	\end{proof}
	
	We are now ready to prove Theorem~\ref{thm:WedderburnGVZ}.

	\begin{proof}[Proof of Theorem~\ref{thm:WedderburnGVZ}]
			Let $G$ be a finite nested GVZ $p$-group, where $p$ is an odd prime, and let $\chi \in \Irr(G)$. Suppose $\rho$ is an irreducible $\mathbb{Q}$-representation of $G$ affording the character $\Omega(\chi)$. Denote by $A_\mathbb{Q}(\chi)$ the simple component in the Wedderburn decomposition of $\mathbb{Q}G$ corresponding to $\rho$, so that $A_\mathbb{Q}(\chi) \cong M_q(D)$ for some $q \in \mathbb{N}$ and a division algebra $D$. By Lemma~\ref{lemma:schurindexpgroup}, we have $m_\mathbb{Q}(\chi) = 1$. Moreover, Lemma~\ref{Reiner} shows that $[D:Z(D)] = m_\mathbb{Q}(\chi)^2$ and $Z(D) = \mathbb{Q}(\chi)$. Hence, $D = Z(D) = \mathbb{Q}(\chi)$. Next, consider
			\[
			\rho \cong \bigoplus_{i=1}^l \rho_i,
			\]
			where $l = [\mathbb{Q}(\chi): \mathbb{Q}]$ and each $\rho_i$ is an irreducible complex representation of $G$ affording the character $\chi^{\sigma_i}$ for some $\sigma_i \in \mathrm{Gal}(\mathbb{Q}(\chi)/\mathbb{Q})$. Since $m_\mathbb{Q}(\chi) = 1$, it follows from \cite[Theorem 3.3.1]{JR} that $q = \chi(1)$.  
			
			Note that $\lin(G) \cong G/G'$. Therefore, the simple components in the Wedderburn decomposition of $\mathbb{Q}G$ corresponding to all inequivalent irreducible $\mathbb{Q}$-representations of $G$ affording the character $\Omega(\chi)$ with $\chi \in \lin(G)$ are precisely isomorphic to $\mathbb{Q}(G/G')$.  
			
			Furthermore, we have
			\[
			\cd(G) = \{p^{\delta_i} : 0 \leq i \leq n, \; 0 = \delta_0 < \delta_1 < \cdots < \delta_n\},
			\]
			and for each $i$, let $Z_{\delta_i} := Z(\chi)$ for some $\chi \in \Irr_{p^{\delta_i}}(G)$. Note that $Z_{\delta_i}/[Z_{\delta_i}, G]$ is abelian for every $i \in \{0,1,\dots,n\}$.  
			
			Fix $r \in \{1,2,\dots,n\}$, and let $\rho$ be an irreducible $\mathbb{Q}$-representation of $G$ affording the character $\Omega(\chi_\mu)$, where $\chi_\mu \in \Irr_{p^{\delta_r}}(G)$ (given in \eqref{GVZcharacter}). Here, $\chi_\mu(1) = p^{\delta_r}$ and $\mathbb{Q}(\chi_\mu) = \mathbb{Q}(\mu)$. Therefore, by the above discussion, we have
			\[
			A_\mathbb{Q}(\chi_\mu) \cong M_{p^{\delta_r}}(\mathbb{Q}(\mu)).
			\]
			Moreover, $\mathbb{Q}(\chi_\mu) = \mathbb{Q}(\mu) = \mathbb{Q}(\zeta_{d_r})$ for some $d_r$ dividing $\exp(Z_{\delta_r}/[Z_{\delta_r}, G])$.  
			
			We now distinguish two cases.  
			
			\noindent\textbf{Case 1.} If $d_r \mid \exp(Z_{\delta_r}/[Z_{\delta_r}, G])$ but $d_r \nmid \exp(Z_{\delta_r}/[Z_{\delta_{r-1}}, G])$, then by Lemmas~\ref{lemma:galoisGVZ} and~\ref{lemma:GVZgaloisconjugates}(1), the number of inequivalent irreducible $\mathbb{Q}$-representations of $G$ affording the character $\Omega(\chi_\mu)$ for some character $\chi_\mu \in \Irr_{p^{\delta_r}}(G)$ with $\mathbb{Q}(\chi_\mu) = \mathbb{Q}(\zeta_{d_r})$ equals $a_{d_r}$, where $a_{d_r}$ is the number of cyclic subgroups of order $d_r$ in $Z_{\delta_r}/[Z_{\delta_r}, G]$.  
			
			\noindent\textbf{Case 2.} If $d_r \mid \exp(Z_{\delta_r}/[Z_{\delta_r}, G])$ and $d_r \mid \exp(Z_{\delta_r}/[Z_{\delta_{r-1}}, G])$, then by Lemmas~\ref{lemma:galoisGVZ} and~\ref{lemma:GVZgaloisconjugates}(2), the number of inequivalent irreducible $\mathbb{Q}$-representations of $G$ affording the character $\Omega(\chi_\mu)$ for some character $\chi_\mu \in \Irr_{p^{\delta_r}}(G)$ with $\mathbb{Q}(\chi_\mu) = \mathbb{Q}(\zeta_{d_r})$ equals $a_{d_r} - a_{d_r}'$, where $a_{d_r}$ and $a_{d_r}'$ denote the numbers of cyclic subgroups of order $d_r$ in $Z_{\delta_r}/[Z_{\delta_r}, G]$ and $Z_{\delta_r}/[Z_{\delta_{r-1}}, G]$, respectively.  
			
			Now, let $m_r$ and $m_r'$ be the exponents of $Z_{\delta_r}/[Z_{\delta_r}, G]$ and $Z_{\delta_r}/[Z_{\delta_{r-1}}, G]$, respectively. By combining the two cases, the simple components of $\mathbb{Q}G$ corresponding to all inequivalent irreducible $\mathbb{Q}$-representations of $G$ affording the character $\Omega(\chi_\mu)$, where $\chi_\mu \in \Irr_{p^{\delta_r}}(G)$, contribute
            $$\bigoplus_{\substack{d_r \mid m_r \\ d_r \nmid m_r'}} 
            a_{d_r}\, M_{p^{\delta_r}}\!\left(\mathbb{Q}(\zeta_{d_r})\right) \;\;\bigoplus_{\substack{d_r \mid m_r \\ d_r \mid m_r'}} 
            \bigl(a_{d_r} - a'_{d_r}\bigr)\, M_{p^{\delta_r}}\!\left(\mathbb{Q}(\zeta_{d_r})\right)$$
			to the Wedderburn decomposition of $\mathbb{Q}G$.  
			
			Finally, by collecting the simple components corresponding to all inequivalent irreducible $\mathbb{Q}$-representations of $G$ affording the character $\Omega(\chi)$ with $\chi \in \Irr_{p^{\delta_i}}(G)$ for all $i \in \{0,1,\dots,n\}$, the result follows. This completes the proof of Theorem~\ref{thm:WedderburnGVZ}.
	\end{proof}

	\section{Two-generator $p$-groups of nilpotency class $2$}\label{sec:2generator}
	
	The classification of two-generator $p$-groups of class $2$ is given in \cite{2generatorp-group}. In this section, we begin by introducing some notation and terminology that will allow us to restate the main result of \cite{2generatorp-group}.  
	Let $p$ be a prime. For a given integer $n>2$, define a set of $5$-tuples
	$$\tau_n=\{(\alpha, \beta, \gamma; \rho, \sigma) ~:~ \alpha \geq \beta\geq\gamma\geq 1,~ \alpha+\beta+\gamma=n,~ 0\leq \rho \leq \gamma,~ 0\leq\sigma \leq \gamma \}.$$ 
	For each $(\alpha, \beta, \gamma; \rho, \sigma)\in \tau_n$, consider the group
	\begin{equation}\label{present:2generator p-group}
		G=G_{(\alpha, \beta, \gamma; \rho, \sigma)}=\langle a, b~:~[a, b]^{p^\gamma}=[a, b, a]=[a, b, b]=1, ~a^{p^\alpha}=[a, b]^{p^\rho}, ~b^{p^\beta}=[a, b]^{p^\sigma} \rangle.
	\end{equation}
	It is clear from the presentation that $G=G_{(\alpha, \beta, \gamma; \rho, \sigma)}$ is a two-generator $p$-group of class $2$ with order $p^n$. Moreover, the derived subgroup is 
	 $$G'=\langle [a, b] \rangle \cong C_{p^\gamma}.$$ 
	 and the center is 
	 $$Z(G) =\langle a^{p^\gamma}, b^{p^\gamma}, [a, b] \rangle.$$
	 
\noindent 	 When $\rho \leq \sigma$, the subgroup $\langle a^{p^\gamma}\rangle \cong C_{p^{\alpha-\rho}}$ forms a cyclic direct factor $Z(G)$, and 
	 $$Z(G)/{\langle a^{p^\gamma}\rangle} =\big \langle b^{p^\gamma}\langle a^{p^\gamma}\rangle,~ [a, b]\langle a^{p^\gamma}\rangle \big\rangle \cong C_{p^{\beta-\gamma}} \times C_{p^{\rho}}.$$ 
	 Similarly, if $\sigma < \rho$, then $\langle b^{p^\gamma}\rangle \cong C_{p^{\beta-\sigma}}$ is a cyclic direct factor of $Z(G)$, and 
	 $$Z(G)/{\langle b^{p^\gamma}\rangle} =\big \langle a^{p^\gamma}\langle b^{p^\gamma}\rangle,~ [a, b]\langle b^{p^\gamma}\rangle \big\rangle \cong C_{p^{\alpha-\gamma}} \times C_{p^{\sigma}}.$$
	 Thus, we obtain 
	$$Z(G) \cong \begin{cases}
		C_{p^{\alpha-\rho}} \times C_{p^{\beta-\gamma}} \times C_{p^{\rho}} & \text{if } \rho \leq \sigma,\\
		C_{p^{\alpha-\gamma}} \times C_{p^{\beta-\sigma}} \times C_{p^{\sigma}} & \text{if } \sigma < \rho.
	\end{cases}$$
	
   Next, we introduce the following subsets of $\tau_n$:
	\begin{align*}
		\tau_{n_1}&=\{(\alpha, \beta, \gamma; \rho, \gamma) \in \tau_n ~:~ \alpha > \beta\geq\gamma\geq \rho \geq 0 \},\\
		\tau_{n_2}&=\{(\alpha, \beta, \gamma; \gamma, \sigma) \in \tau_n ~:~ \alpha > \beta\geq\gamma > \sigma \geq 0 \},\\
		\tau_{n_3}&=\{(\alpha, \beta, \gamma; \rho, \sigma) \in \tau_n ~:~ \alpha > \beta\geq\gamma ~\text{and}~ \min(\gamma, ~\sigma+\alpha-\beta)>\rho>\sigma \geq 0 \},\\
		\tau_{n_4}&=\{(\alpha, \alpha, \gamma; \rho, \gamma) \in \tau_n ~:~ \alpha > \gamma \geq \rho \geq 0 \},\\
		\tau_{n_5}&=\{(\gamma, \gamma, \gamma; \rho, \gamma) \in \tau_n ~:~ 0 \leq \rho \leq \gamma \}.
	\end{align*}
	
\noindent For an odd prime $p$, a $5$-tuple $(\alpha, \beta, \gamma; \rho, \sigma)\in \tau_n$ is called \emph{$p$-good 5-tuple} if it belongs to $\tau_{n_1}\cup\tau_{n_2}\cup\tau_{n_3}\cup\tau_{n_4}\cup\tau_{n_5}$.
	\begin{lemma}\cite[Theorem 1.1]{2generatorp-group}\label{lemma:classification2generator}
		Let $G$ be a two-generator $p$-group of class $2$ and order $p^n$, with $p$ an odd prime. Then there exists a unique $p$-good $5$-tuple $(\alpha, \beta, \gamma; \rho, \sigma)\in \tau_{n_1}\cup \tau_{n_2}\cup\tau_{n_3}\cup\tau_{n_4}\cup\tau_{n_5}$ such that $G \cong G_{(\alpha, \beta, \gamma; \rho, \sigma)}$.
	\end{lemma}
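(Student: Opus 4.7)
The plan is to prove existence and uniqueness separately, exploiting the fact that a two-generator $p$-group of class~$2$ is almost determined by its abelianization and derived subgroup.

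For existence, let $G$ be two-generated of class~$2$ with $|G|=p^n$. Since $G'\leq Z(G)$, the commutator map $G/Z(G)\times G/Z(G)\to G'$ is bilinear, and because $G=\langle a,b\rangle$, the group $G'$ is cyclic, generated by $c=[a,b]$; let $|G'|=p^\gamma$. By the Burnside basis theorem, $G/G'$ is a two-generated abelian $p$-group, so $G/G'\cong C_{p^\alpha}\times C_{p^\beta}$ for some $\alpha\geq\beta\geq 1$. A standard lifting argument (using that $p$ is odd so that the Hall--Petrescu collection gives $(xy)^{p^k}\equiv x^{p^k}y^{p^k}\pmod{G'^{p^k}}$) produces a minimal generating set, still called $\{a,b\}$, with $a^{p^\alpha},b^{p^\beta}\in G'$. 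Writing $a^{p^\alpha}=c^{p^\rho}$, $b^{p^\beta}=c^{p^\sigma}$ with $0\leq \rho,\sigma\leq\gamma$, and verifying $\alpha+\beta+\gamma=n$ by counting orders, produces a presentation of the form \eqref{present:2generator p-group}, so $G\cong G_{(\alpha,\beta,\gamma;\rho,\sigma)}$ for some $(\alpha,\beta,\gamma;\rho,\sigma)\in\tau_n$.

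The second step is to modify the generators so that the $5$-tuple lands in $\tau_{n_1}\cup\tau_{n_2}\cup\tau_{n_3}\cup\tau_{n_4}\cup\tau_{n_5}$. The key technical device is a family of Tietze-style substitutions $a\mapsto a^ub^v c^w$, $b\mapsto a^sb^t c^u$ with appropriate units modulo $p$; because commutators are bilinear, these transform $(\rho,\sigma)$ in a controlled way while leaving $\alpha,\beta,\gamma$ fixed. A careful case analysis shows: when $\alpha>\beta$ one can normalize either $\sigma=\gamma$ (giving $\tau_{n_1}$), or $\rho=\gamma$ with $\sigma<\gamma$ (giving $\tau_{n_2}$), or, when neither of these normalizations is available, force the strict inequalities defining $\tau_{n_3}$; when $\alpha=\beta>\gamma$ a symmetric argument yields $\tau_{n_4}$; when $\alpha=\beta=\gamma$ the fully symmetric normalization yields $\tau_{n_5}$.

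For uniqueness, I would show that each ingredient of the $p$-good $5$-tuple is an isomorphism invariant of $G$. The integer $\gamma$ is read off from $|G'|$. The pair $(\alpha,\beta)$ is read off from the invariant factors of $G/G'\cong C_{p^\alpha}\times C_{p^\beta}$. Finally, $\rho$ and $\sigma$ (together with the distinction $\rho\leq\sigma$ versus $\sigma<\rho$) are recovered from the structure of $Z(G)$ as given by the two cases displayed above \eqref{present:2generator p-group}, i.e.\ from the invariant factors of $Z(G)$ together with the index of the cyclic direct factor generated by $a^{p^\gamma}$ or $b^{p^\gamma}$. The defining inequalities in each $\tau_{n_i}$ exclude every nontrivial swap or shift that could send one $p$-good tuple to another, so these invariants pin down $(\alpha,\beta,\gamma;\rho,\sigma)$ uniquely.

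The main obstacle is the second step: verifying that a generic $5$-tuple in $\tau_n$ can actually be transformed, by an automorphism of $G$, into a $p$-good tuple, and doing so without double-counting across the five disjoint families. This is essentially a bookkeeping problem, but it is delicate because the allowed substitutions alter $\rho$ and $\sigma$ in ways that depend sensitively on the relative sizes of $\alpha,\beta,\gamma$, and the case $\alpha=\beta$ (where $a$ and $b$ play symmetric roles) must be analyzed separately to avoid redundant normalizations; the argument in \cite{2generatorp-group} handles this with an explicit construction of the required automorphisms.
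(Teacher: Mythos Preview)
The paper does not prove this lemma; it is quoted from \cite[Theorem~1.1]{2generatorp-group} and used as a black box. There is therefore no proof in the present paper to compare against, and your sketch is effectively a sketch of the argument in the cited source.

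Your architecture (obtain \emph{some} tuple in $\tau_n$, normalize it by generator substitutions to a $p$-good tuple, then show the $p$-good tuple is an isomorphism invariant) is indeed the strategy of \cite{2generatorp-group}, but two steps are not yet proofs.

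First, the normalization is where essentially all the work lies. ``A careful case analysis shows'' hides the entire content: in \cite{2generatorp-group} this occupies several pages, and the case distinctions that produce $\tau_{n_3}$ (where the bound $\min(\gamma,\sigma+\alpha-\beta)>\rho$ appears) are not visible from the general shape of the substitutions $a\mapsto a^ub^vc^w$ you write down. Without exhibiting the actual automorphisms and tracking how they move $(\rho,\sigma)$, this step is an assertion, not an argument.

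Second, your uniqueness argument has a genuine gap. You propose to recover $\rho$ and $\sigma$ from the invariant factors of $Z(G)$. But the description of $Z(G)$ recorded just above the lemma involves only the \emph{smaller} of $\rho,\sigma$: when $\sigma<\rho$ one has $Z(G)\cong C_{p^{\alpha-\gamma}}\times C_{p^{\beta-\sigma}}\times C_{p^{\sigma}}$, and $\rho$ does not appear. In particular, two $p$-good tuples in $\tau_{n_3}$ with the same $(\alpha,\beta,\gamma,\sigma)$ but different $\rho$ yield isomorphic centers, so $Z(G)$ cannot separate them. Your fallback---``the index of the cyclic direct factor generated by $a^{p^\gamma}$ or $b^{p^\gamma}$''---depends on the chosen generating pair and is not, as written, an isomorphism invariant of $G$. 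The cited paper distinguishes such groups by a finer analysis of the possible automorphisms, not by the center alone.
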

	
 Nenciu~\cite{Nenciu2generators} showed that every two-generator $p$-group of class $2$ is a nested GVZ $p$-group. The next lemma describes some additional properties of such groups, which is a summary of \cite[Theorems 3.1, 3.3 and 4.1]{Nenciu2generators}.
	\begin{lemma}\label{lemma:property2generator}
		Let $G= G_{(\alpha, \beta, \gamma; \rho, \sigma)}$ as defined in~\eqref{present:2generator p-group} be a two-generator $p$-group of class $2$ and order $p^n$, where $p$ is an odd prime. Then we have the following.
		\begin{enumerate}
			\item $\cd(G)=\{1, p, \cdots, p^\gamma\}$.
					\item If $\chi \in \Irr(G)$ with $\chi(1)=p^\delta$ for some $\delta \in \{0, 1, \dots, \gamma\}$, then $Z(\chi)=\langle a^{p^\delta},~ b^{p^\delta},~ [a, b]\rangle$ and $[Z(\chi), G] = \langle {[a, b]}^{p^\delta} \rangle$.
		\end{enumerate}
	\end{lemma}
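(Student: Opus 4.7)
The plan is to establish (1) and (2) simultaneously by analysing $\Irr(G\,|\,\lambda)$ for each linear character $\lambda$ of $G' = \langle [a,b]\rangle \cong C_{p^\gamma}$. Since $\nil(G) = 2$, we have $G' \subseteq Z(G)$; hence for any $\chi \in \nl(G)$, Clifford's theorem together with Schur's lemma forces $\chi\!\downarrow_{G'} = \chi(1)\,\lambda$ for a unique $\lambda \in \Irr(G')$, and $\lambda$ has order $p^\delta$ for some $1 \leq \delta \leq \gamma$. A short argument (applying Schur's lemma to $\rho(g)$ for a representation $\rho$ affording $\chi$) shows that $g \in Z(\chi)$ iff $[g,h] \in \ker(\chi) \cap G' = \ker(\lambda)$ for all $h \in G$. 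Equivalently, $Z(\chi)/Z(G)$ is the radical of the alternating pairing
$$\beta_\lambda \colon G/Z(G) \times G/Z(G) \longrightarrow \mathbb{C}^{\times}, \qquad \beta_\lambda(\bar g_1, \bar g_2) = \lambda([g_1, g_2]),$$
which is well-defined since $G$ has class $2$. The same argument, combined with \cite[Theorem~2.31]{I}, yields $\chi(1)^2 = [G : Z(\chi)]$.

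Next, I would compute $\beta_\lambda$ explicitly from the presentation~\eqref{present:2generator p-group}. The relations $[a,b,a] = [a,b,b] = 1$ together with the centrality of $[a,b]$ give the collection identity
$$[\,a^x b^y,\; a^u b^v\,] = [a,b]^{\,xv - yu},$$
so $\beta_\lambda(\overline{a^x b^y}, \overline{a^u b^v}) = \lambda([a,b])^{xv - yu}$. Since $\lambda$ has order $p^\delta$, this equals $1$ identically in $(u,v)$ iff $p^\delta \mid x$ and $p^\delta \mid y$. Hence the radical of $\beta_\lambda$ is the image of $\langle a^{p^\delta}, b^{p^\delta}\rangle$ in $G/Z(G)$, and its preimage in $G$ equals
$$H_\delta \;:=\; \langle a^{p^\delta},\, b^{p^\delta},\, [a,b]\rangle,$$
using $Z(G) = \langle a^{p^\gamma}, b^{p^\gamma}, [a,b]\rangle \subseteq H_\delta$ whenever $\delta \leq \gamma$.

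An index count in the abelianisation $G/G' \cong C_{p^\alpha} \times C_{p^\beta}$ — where $H_\delta/G' = \langle \bar a^{p^\delta}, \bar b^{p^\delta}\rangle$ has order $p^{\alpha+\beta-2\delta}$ — gives $[G : H_\delta] = p^{2\delta}$, so $\chi(1) = p^\delta$. Letting $\lambda$ range over characters of $G'$ of every order $p^\delta$ with $1 \leq \delta \leq \gamma$, together with the contribution of the linear characters at $\delta = 0$, yields $\cd(G) = \{1, p, \dots, p^\gamma\}$, proving (1). Since $Z(\chi) = H_\delta$ depends only on $\chi(1) = p^\delta$, the first half of~(2) follows. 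For the second half, centrality of $[a,b]$ reduces
$$[Z(\chi), G] \;=\; [H_\delta, G] \;=\; \bigl\langle [a^{p^\delta}, b],\, [b^{p^\delta}, a] \bigr\rangle \;=\; \langle [a,b]^{p^\delta}\rangle,$$
completing~(2).

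The only nontrivial ingredient is the identification of $Z(\chi)$ as the preimage of the radical of $\beta_\lambda$; this is standard class-$2$ character theory and is the engine driving \cite{Nenciu2generators}. Beyond that, the argument is a clean commutator calculation and an index count directly from the presentation, so no real obstacle arises.
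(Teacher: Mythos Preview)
Your proof is correct. Note, however, that the paper does not prove this lemma at all: it simply records it as a summary of \cite[Theorems~3.1, 3.3 and 4.1]{Nenciu2generators}. What you have written is essentially the argument Nenciu gives in that reference --- analyse the alternating form $\beta_\lambda(\bar g_1,\bar g_2)=\lambda([g_1,g_2])$ on $G/Z(G)$ induced by a linear character $\lambda$ of $G'$, compute its radical from the class-$2$ identity $[a^xb^y,a^ub^v]=[a,b]^{xv-yu}$, and read off $Z(\chi)$ and $\chi(1)^2=[G:Z(\chi)]$ --- so your self-contained derivation recovers exactly the cited result rather than offering a genuinely different route.
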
 

	We are now ready to prove Theorem~\ref{thm:Wedderburn2generator}, which provides a combinatorial description of the Wedderburn decomposition of rational group algebras for the family of groups defined in~\eqref{present:2generator p-group}.
	\begin{theorem}\label{thm:Wedderburn2generator}
		Let $G= G_{(\alpha, \beta, \gamma; \rho, \sigma)}$ (defined in~\eqref{present:2generator p-group}) be a two-generator $p$-group of class $2$ and order $p^n$, where $p$ is an odd prime. Then
		$$\mathbb{Q}G \cong \mathbb{Q} \bigoplus_{\lambda=1}^\beta (p^\lambda+p^{\lambda-1})\mathbb{Q}(\zeta_{p^\lambda}) \bigoplus_{\lambda=\beta+1}^\alpha p^\beta \mathbb{Q}(\zeta_{p^\lambda}) \bigoplus_{\delta=1}^\gamma \bigoplus_{\lambda=1}^{m_\delta}(a_{p^\lambda}-a_{p^\lambda}') M_{p^{\delta}}(\mathbb{Q}(\zeta_{p^\lambda})),$$
		where $a_{p^\lambda}$ and $a_{p^\lambda}'$ are the number of cyclic subgroups of  order $p^\lambda$ of $\big \langle a^{p^\delta},~ b^{p^\delta},~ [a, b]\big\rangle\big/\big\langle {[a, b]}^{p^\delta} \big\rangle$ and $\big \langle a^{p^\delta},~ b^{p^\delta},~ [a, b]\big\rangle\big/\big\langle {[a, b]}^{p^{\delta-1}} \big\rangle$, respectively, and $p^{m_\delta}$ denotes the exponent of $\big \langle a^{p^\delta},~ b^{p^\delta},~ [a, b]\big\rangle\big/\big\langle {[a, b]}^{p^\delta} \big\rangle$.
	\end{theorem}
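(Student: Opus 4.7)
The plan is to specialize Theorem~\ref{thm:WedderburnGVZ} to this family using the structural information from Lemma~\ref{lemma:property2generator}. Since every two-generator $p$-group of class~$2$ is a nested GVZ $p$-group~\cite{Nenciu2generators}, Theorem~\ref{thm:WedderburnGVZ} applies. By Lemma~\ref{lemma:property2generator}(1) we have $\cd(G)=\{1,p,\dots,p^\gamma\}$, so in the notation of Theorem~\ref{thm:WedderburnGVZ} we take $n=\gamma$ and $\delta_r=r$ for $0\le r\le\gamma$. Lemma~\ref{lemma:property2generator}(2) then identifies, for each $\delta\in\{1,\dots,\gamma\}$, the relevant subgroups as
\[
Z_{\delta}=\langle a^{p^\delta},\,b^{p^\delta},\,[a,b]\rangle,\qquad [Z_{\delta},G]=\langle [a,b]^{p^\delta}\rangle,
\]
so the quotients $Z_{\delta}/[Z_{\delta},G]$ and $Z_{\delta}/[Z_{\delta-1},G]$ are precisely the ones appearing in the statement.

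For the linear part, I first identify $G/G'$. Reducing the relations in~\eqref{present:2generator p-group} modulo $G'=\langle [a,b]\rangle$ gives $a^{p^\alpha}\equiv 1$ and $b^{p^\beta}\equiv 1$, whence $G/G'\cong C_{p^\alpha}\times C_{p^\beta}$. Applying Theorem~\ref{Perlis-walker} to $\mathbb{Q}(G/G')$ and counting cyclic subgroups of order $p^\lambda$ in $C_{p^\alpha}\times C_{p^\beta}$ via Lemma~\ref{lemma:CyclicCounting} with $k=2$, $\alpha_1=\beta$, $\alpha_2=\alpha$, a short case analysis yields $p^\lambda+p^{\lambda-1}$ such subgroups for $1\le\lambda\le\beta$ and $p^\beta$ such subgroups for $\beta<\lambda\le\alpha$. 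This reproduces the first three direct summands of the claim.

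For the nonlinear part, I substitute the data above into the formula of Theorem~\ref{thm:WedderburnGVZ}. The only step requiring an observation is to merge the two index ranges $\{d_r\mid m_r,\,d_r\nmid m_r'\}$ and $\{d_r\mid m_r,\,d_r\mid m_r'\}$ into a single sum: whenever $d_r\nmid m_r'$, the abelian quotient $Z_{\delta}/[Z_{\delta-1},G]$ has no cyclic subgroup of order $d_r$, so $a'_{d_r}=0$ and the two conventions agree. Consequently both families coalesce into $\bigoplus_{\lambda=1}^{m_\delta}(a_{p^\lambda}-a'_{p^\lambda})M_{p^\delta}(\mathbb{Q}(\zeta_{p^\lambda}))$ (the $\lambda=0$ term drops out since $a_1-a'_1=0$).

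No step requires ideas beyond Theorem~\ref{thm:WedderburnGVZ}, Theorem~\ref{Perlis-walker}, and Lemmas~\ref{lemma:CyclicCounting} and~\ref{lemma:property2generator}; the main effort is bookkeeping, principally the two-variable cyclic subgroup count in $C_{p^\alpha}\times C_{p^\beta}$ and verifying that the (generally non-cyclic) structure of $Z_{\delta}/[Z_{\delta},G]$ does not interfere with the clean indexing $1\le\lambda\le m_\delta$ in the nonlinear summands.
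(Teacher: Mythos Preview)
Your proposal is correct and follows essentially the same route as the paper: identify $G/G'\cong C_{p^\alpha}\times C_{p^\beta}$, count its cyclic subgroups via Lemma~\ref{lemma:CyclicCounting} to obtain the linear part through Theorem~\ref{Perlis-walker}, read off $Z_\delta$ and $[Z_\delta,G]$ from Lemma~\ref{lemma:property2generator}, and feed these into Theorem~\ref{thm:WedderburnGVZ}. Your justification for collapsing the two index ranges of Theorem~\ref{thm:WedderburnGVZ} into a single sum (that $a'_{d_r}=0$ whenever $d_r\nmid m_r'$) is in fact slightly more explicit than the paper, which records only that $m_r'\mid m_r$ and then asserts that the result follows.
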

	\begin{proof}
		As $(\alpha, \beta, \gamma; \rho, \gamma) \in \tau_n$, we have $\alpha \geq \beta \geq \gamma \geq 1$. From \eqref{present:2generator p-group}, we have
		\begin{equation*}
			G=G_{(\alpha, \beta, \gamma; \rho, \sigma)}=\langle a, b~:~[a, b]^{p^\gamma}=[a, b, a]=[a, b, b]=1, ~a^{p^\alpha}=[a, b]^{p^\rho}, ~b^{p^\beta}=[a, b]^{p^\sigma} \rangle.
		\end{equation*}
		Note that $G'=\big \langle [a, b] \big \rangle \cong C_{p^\gamma}$ and the quotient $G/G'=\big \langle aG', bG' \big \rangle \cong C_{p^\alpha} \times C_{p^\beta}$. For each $1 \leq \lambda \leq \alpha$, let $\mathcal{G/G'}_p(\lambda)$ denote the number of cyclic subgroups of order $p^\lambda$ in $G/G'$. By Lemma~\ref{lemma:CyclicCounting}, for $\lambda$ with $1 \leq \lambda \leq \beta$, we have $k=2$ and 
		\begin{align*}
			\mathcal{G/G'}_p(\lambda)=  \frac{p h_p^1(\lambda)- h_p^1(\lambda-1)}{p-1},
		\end{align*}
		where, $h_p^1(\lambda)=p^\lambda$ and $h_p^1(\lambda-1)=p^{\lambda-1}$. Hence, we get
		\begin{align*}
			\mathcal{G/G'}_p(\lambda)= \frac{p^{\lambda+1}-p^{\lambda-1}}{p-1}=p^{\lambda}+p^{\lambda-1}.
		\end{align*}
		Similarly, for $\lambda$ with $\beta+1 \leq \lambda \leq \alpha$, we have $k=2$ and 
		\begin{align*}
			\mathcal{G/G'}_p(\lambda)=  \frac{p h_p^1(\lambda)- h_p^1(\lambda-1)}{p-1} = \frac{p^{\beta+1}-p^{\beta}}{p-1}=p^{\beta}
		\end{align*}
	     as $h_p^1(\lambda)=h_p^1(\lambda-1)=p^{\beta}$ (see Lemma~\ref{lemma:CyclicCounting}). Hence, from Theorem~\ref{Perlis-walker}, we get
		$$\mathbb{Q}(G/G') \cong \mathbb{Q} \bigoplus_{\lambda=1}^\beta (p^\lambda+p^{\lambda-1})\mathbb{Q}(\zeta_{p^\lambda}) \bigoplus_{\lambda=\beta+1}^\alpha p^\beta \mathbb{Q}(\zeta_{p^\lambda}).$$	
			
		For each $\delta \in \{0,1,\dots,\gamma\}$, let $Z_\delta=Z(\chi)$ for $\chi \in \Irr(G)$ with $\chi(1)=p^\delta$. Then by Lemma~\ref{lemma:property2generator}, we get  
	     $$Z_\delta=\langle a^{p^\delta},~ b^{p^\delta},~ [a, b]\rangle \quad \text{and} \quad [Z_\delta, G] = \langle {[a, b]}^{p^\delta} \rangle.$$ 
	     Moreover, $\cd(G)=\{1, p, \dots, p^\gamma\}$. 
		Suppose $p^{m_\delta}$ is the exponent of $Z_\delta/[Z_\delta, G]$ for each $\delta \in \{1, 2, \cdots, \gamma\}$. We have 
		$$Z_\delta/[Z_{\delta-1}, G]= (Z_\delta/[Z_\delta, G])\big /([Z_{\delta -1}]/[Z_\delta, G]).$$
		This implies that $\exp(Z_\delta/[Z_{\delta-1}, G])$ divides $p^{m_\delta}$. Therefore, by applying Theorem~\ref{thm:WedderburnGVZ}, the result follows. This completes the proof of Theorem~\ref{thm:Wedderburn2generator}.
	\end{proof}

	\subsection{An explicit formula for $\mathbb{Q}G_{(\alpha,\beta,\gamma;\rho,\sigma)}$ with $(\alpha,\beta,\gamma;\rho,\sigma) \in \tau_{n_5}$} 
	
	In this subsection, we derive an explicit combinatorial expression for the Wedderburn decomposition of rational group algebras corresponding to a particular subclass of two-generator $p$-groups of class $2$. This serves as a concrete illustration of Theorem~\ref{thm:Wedderburn2generator}. Here, we restrict our attention to groups that are isomorphic to $G_{(\alpha, \beta, \gamma; \rho, \sigma)}$ with $(\alpha, \beta, \gamma; \rho, \sigma) \in \tau_{n_5}$. We begin with the following lemma.  
	\begin{lemma}\label{lemma:2generator5}
		Let $G= G_{(\alpha, \beta, \gamma; \rho, \sigma)}$ as defined in~\eqref{present:2generator p-group} be a two generator $p$-group of class two and of order $p^n$ for an odd prime $p$, such that $(\alpha, \beta, \gamma; \rho, \sigma) \in \tau_{n_5}$. Suppose $1 \leq \delta \leq \gamma$. Then we have the following.
		\begin{enumerate}
			\item $\big \langle a^{p^\delta},~ b^{p^\delta},~ [a, b]\big\rangle\big/\big\langle {[a, b]}^{p^\delta} \big\rangle \cong \begin{cases}
				C_{p^{\gamma-\rho}} \times C_{p^{\gamma-\delta}} \times C_{p^\rho} & \text{if } \rho < \delta<\gamma-\rho, \\
				C_{p^{\gamma-\delta}} \times C_{p^{\gamma-\delta}} \times C_{p^\delta} & \text{otherwise}.
			\end{cases}$
			
			\item $\big \langle a^{p^\delta},~ b^{p^\delta},~ [a, b]\big\rangle\big/\big\langle {[a, b]}^{p^{\delta-1}} \big\rangle \cong \begin{cases}
				C_{p^{\gamma-\rho-1}} \times C_{p^{\gamma-\delta}} \times C_{p^\rho} & \text{if } \rho < \delta<\gamma-\rho, \\
				C_{p^{\gamma-\delta}} \times C_{p^{\gamma-\delta}} \times C_{p^{\delta-1}} & \text{otherwise}.
			\end{cases}$
		\end{enumerate}
	\end{lemma}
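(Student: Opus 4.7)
The approach is to present each quotient as a finitely generated abelian group on three generators and read off its structure from a Smith normal form. Because $G$ has nilpotency class $2$, the commutator $[a,b]$ is central in $G$, so $Z_\delta=\langle a^{p^\delta},b^{p^\delta},[a,b]\rangle$ is itself abelian and both quotients in parts~(1) and~(2) are abelian groups generated by $x=a^{p^\delta}$, $y=b^{p^\delta}$, $z=[a,b]$. The hypothesis $(\alpha,\beta,\gamma;\rho,\sigma)=(\gamma,\gamma,\gamma;\rho,\gamma)\in\tau_{n_5}$, together with~\eqref{present:2generator p-group}, yields the power relations $a^{p^\gamma}=[a,b]^{p^\rho}$, $b^{p^\gamma}=1$, and $[a,b]^{p^\gamma}=1$, which translate modulo $\langle[a,b]^{p^\delta}\rangle$ to
\[
z^{p^\delta}=1,\qquad y^{p^{\gamma-\delta}}=1,\qquad x^{p^{\gamma-\delta}}=z^{p^\rho}.
\]
Since $y$ appears in no other relation, $\langle y\rangle\cong C_{p^{\gamma-\delta}}$ splits off as a direct factor, reducing the task to describing the two-generator abelian group $A=\langle x,z\mid z^{p^\delta}=1,\ x^{p^{\gamma-\delta}}=z^{p^\rho}\rangle$.

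For a $2\times 2$ relation matrix the invariant factors are just $(g,\,|\det|/g)$, where $g$ is the gcd of the entries. The matrix
\[
M=\begin{pmatrix}0 & p^\delta\\ p^{\gamma-\delta} & -p^\rho\end{pmatrix}
\]
has $\det M=-p^\gamma$, and $\gcd$ of its entries equals $p^{\min(\delta,\gamma-\delta,\rho)}$. This minimum equals $p^\rho$ precisely in the interior range $\rho<\delta<\gamma-\rho$, giving invariant factors $(p^\rho,p^{\gamma-\rho})$ and hence $A\cong C_{p^\rho}\times C_{p^{\gamma-\rho}}$. In the complementary range, either $\rho\geq\delta$ (so $z^{p^\rho}=1$ already, and the relation $x^{p^{\gamma-\delta}}=z^{p^\rho}$ collapses to $x^{p^{\gamma-\delta}}=1$) or $\rho<\delta$ with $\rho\geq\gamma-\delta$ (whence a single column operation subtracting $p^{\rho-(\gamma-\delta)}$ times column~1 from column~2 clears the $-p^\rho$ entry); both sub-cases produce invariant factors $(p^{\gamma-\delta},p^\delta)$, so $A\cong C_{p^{\gamma-\delta}}\times C_{p^\delta}$. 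Combining with the factor $\langle y\rangle\cong C_{p^{\gamma-\delta}}$ gives part~(1).

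Part~(2) follows by the verbatim argument with $z^{p^\delta}=1$ replaced by $z^{p^{\delta-1}}=1$: only the $(1,2)$ entry of $M$ changes from $p^\delta$ to $p^{\delta-1}$, so $\det$ becomes $-p^{\gamma-1}$ while the case split remains governed by whether $\rho<\delta<\gamma-\rho$ (using that $\rho<\delta$ and $\rho\leq\delta-1$ are equivalent). The interior case yields invariant factors $(p^\rho,p^{\gamma-1-\rho})$ and the complementary case yields $(p^{\gamma-\delta},p^{\delta-1})$, giving the stated isomorphisms. The only place one must be slightly careful is at the boundary values $\rho=\delta$ and $\delta=\gamma-\rho$, which the lemma places in the \emph{otherwise} branch; a direct check shows that at each boundary both formulas coincide up to isomorphism, so no ambiguity arises. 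The main obstacle, therefore, is purely bookkeeping: organizing the case distinctions and confirming consistency at the boundaries, while the algebraic content is a routine Smith-normal-form computation.
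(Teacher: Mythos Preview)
Your proof is correct and takes a genuinely different, more systematic route than the paper. The paper argues by hand: it first identifies $\langle b^{p^\delta}[Z_\delta,G]\rangle$ as a cyclic direct factor, then splits into the cases $1\le\delta\le\rho$ versus $\rho<\delta\le\gamma$ (with a further sub-split $\rho<\delta<\gamma-\rho$ versus $\gamma-\rho\le\delta\le\gamma$), and in each case determines by direct order computations which of $a^{p^\delta}$ or $[a,b]$ generates the larger remaining cyclic factor. Your approach instead writes down the $2\times2$ relation matrix for the generators $x=a^{p^\delta}$ and $z=[a,b]$ and reads off the invariant factors from the gcd of the entries and the determinant. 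This is cleaner and more mechanical: once the relation matrix is in place, no further element-by-element order analysis is needed, and the same template immediately handles part~(2) by changing a single entry. The paper's approach, by contrast, makes the group-theoretic structure more visible (one sees explicitly which elements generate which factors), at the cost of more bookkeeping.

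One small imprecision worth tightening: you assert that $\min(\delta,\gamma-\delta,\rho)=\rho$ \emph{precisely} when $\rho<\delta<\gamma-\rho$, but the literal condition is $\rho\le\delta\le\gamma-\rho$ for part~(1) and $\rho<\delta\le\gamma-\rho$ for part~(2). Your closing remark about the boundary values $\delta=\rho$ and $\delta=\gamma-\rho$ correctly absorbs this discrepancy (the two formulas do coincide there), so the argument is sound; just be aware that the ``precisely'' is doing less work than it appears and the boundary check is actually load-bearing. Also, the column operation you describe to clear the $-p^\rho$ entry should \emph{add} rather than subtract the multiple of column~1, though this is peripheral since your gcd/determinant computation already determines the invariant factors without needing the explicit reduction.
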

	\begin{proof}
		Since $(\alpha, \beta, \gamma; \rho, \sigma) \in \tau_{n_5}$, we have $\alpha=\beta=\sigma=\gamma$ and $0\leq\rho \leq \gamma$. Thus, from \eqref{present:2generator p-group}, we have
		\begin{equation*}
			G=G_{(\gamma, \gamma, \gamma; \rho, \gamma)}=\langle a, b~:~[a, b]^{p^\gamma}=[a, b, a]=[a, b, b]=1, ~a^{p^\gamma}=[a, b]^{p^\rho}, ~b^{p^\gamma}=1 \rangle.
		\end{equation*}
		
		For $1 \leq \delta \leq \gamma$, we have   
		$$Z_\delta =\big \langle a^{p^\delta},~ b^{p^\delta},~ [a, b]\big\rangle; \quad [Z_\delta, G]=\big\langle {[a, b]}^{p^\delta} \big\rangle \quad \text{and} \quad [Z_{\delta-1}, G]=\big\langle {[a, b]}^{p^{\delta-1}} \big\rangle,$$
		where $Z_{\delta}=Z(\chi)$ for some $\chi\in \Irr_{p^{\delta}}(G)$ and $Z_{\delta-1}=Z(\psi)$ for some $\psi\in \Irr_{p^{\delta-1}}(G)$ (see Lemma \ref{lemma:property2generator}).
		\begin{enumerate}
			\item First note that  $\big\langle b^{p^\delta}[Z_\delta, G] \big\rangle \cong C_{p^{\gamma-\delta}}$ is a cyclic factor of $Z_\delta/[Z_\delta, G]$. Further analysis proceeds in two cases.
			
\noindent \textbf{Case 1 ($1 \leq \delta \leq \rho$).}				
			In this case, observe that $\big\langle a^{p^\delta}[Z_\delta, G] \big\rangle \cong C_{p^{\gamma-\delta}}$ and $\big\langle [a, b][Z_\delta, G] \big\rangle\cong C_{p^{\delta}}$ are also cyclic factors of $Z_\delta/[Z_\delta, G]$, yielding 
			$$Z_\delta/[Z_\delta, G] \cong C_{p^{\gamma-\delta}} \times C_{p^{\gamma-\delta}} \times C_{p^\delta}.$$

\noindent \textbf{Case 2 ($\rho < \delta \leq \gamma$).}
In this case, $${(a^{p^\delta})}^{p^{\gamma-\rho}}={(a^{p^\gamma})}^{p^{\delta-\rho}}={({[a, b]}^{p^\rho})}^{p^{\delta-\rho}}={[a, b]}^{p^\delta}.$$ 
\noindent \textbf{Sub-case 2(a) ($\rho < \delta<\gamma-\rho$).} In this sub-case, we have the following observations.
\begin{itemize}
\item $\order(a^{p^\delta}[Z_\delta, G])=p^{\gamma-\rho}>\order([a, b][Z_\delta, G])=p^\delta$ in $Z_\delta/[Z_\delta, G]$.
\item $\big\langle a^{p^\delta}[Z_\delta, G] \big\rangle \cong C_{p^{\gamma-\rho}}$ is also a cyclic factor of $Z_\delta/[Z_\delta, G]$.
\item ${(a^{p^\delta})}^{p^{\gamma-\delta}}=a^{p^\gamma}={[a, b]}^{p^\rho}$.
\end{itemize}
Therefore, in this sub-case, $$Z_\delta/[Z_\delta, G] \cong C_{p^{\gamma-\rho}} \times C_{p^{\gamma-\delta}} \times C_{p^\rho}.$$

\noindent \textbf{Sub-case 2(b) ($\gamma-\rho \leq \delta \leq \gamma$).} In this sub-case, we have the following observations.
\begin{itemize}
\item $\order(a^{p^\delta}[Z_\delta, G])=p^{\gamma-\rho} \leq \order([a, b][Z_\delta, G])=p^\delta$ in $Z_\delta/[Z_\delta, G]$.
\item $\big\langle [a, b][Z_\delta, G] \big\rangle \cong C_{p^\delta}$ is a cyclic factor of $Z_\delta/[Z_\delta, G]$.
\item ${(a^{p^\delta})}^{p^{\gamma-\delta}}=a^{p^\gamma}={[a, b]}^{p^\rho}$.
\end{itemize}
			Therefore, in this sub-case, we have
			$$Z_\delta/[Z_\delta, G] \cong C_{p^{\gamma-\delta}} \times C_{p^{\gamma-\delta}} \times C_{p^\delta}.$$
			
			\item The proof proceeds in the same way by replacing $[Z_\delta, G]$ with $[Z_{\delta-1}, G]$. Note that  $\big\langle b^{p^\delta}[Z_{\delta-1}, G] \big\rangle \cong C_{p^{\gamma-\delta}}$ is a cyclic factor of $Z_\delta/[Z_{\delta-1}, G]$. Further analysis again proceeds in two cases.
			
	\noindent\textbf{ Case 1 ($1 \leq \delta \leq \rho$).} In this case, observe that $\big\langle a^{p^\delta}[Z_{\delta-1}, G] \big\rangle \cong C_{p^{\gamma-\delta}}$ and $\big\langle [a, b][Z_{\delta-1}, G] \big\rangle \cong C_{p^{\delta-1}}$ are also cyclic factors of $Z_\delta/[Z_{\delta-1}, G]$. Therefore, we have 
			$$Z_\delta/[Z_{\delta-1}, G] \cong C_{p^{\gamma-\delta}} \times C_{p^{\gamma-\delta}} \times C_{p^{\delta-1}}.$$
			
		\noindent \textbf{Case 2 ($\rho < \delta \leq \gamma$).} In this case, we have
		    $${(a^{p^\delta})}^{p^{\gamma-\rho-1}}={(a^{p^\gamma})}^{p^{\delta-\rho-1}}={({[a, b]}^{p^\rho})}^{p^{\delta-\rho-1}}={[a, b]}^{p^{\delta-1}}.$$
		    
		    \noindent \textbf{Sub-case 2(a) ($\rho < \delta<\gamma-\rho$).} In this sub-case, we have the following observations.
		    \begin{itemize}
		    	\item $\order(a^{p^\delta}[Z_{\delta-1}, G])=p^{\gamma-\rho-1}>\order([a, b][Z_{\delta-1}, G])=p^{\delta-1}$ in $Z_\delta/[Z_{\delta-1}, G]$.
		    	
		    	\item $\big\langle a^{p^\delta}[Z_{\delta-1}, G] \big\rangle \cong C_{p^{\gamma-\rho}}$ is a cyclic factor of $Z_\delta/[Z_{\delta-1}, G]$.
		    	
		    	\item ${(a^{p^\delta})}^{p^{\gamma-\delta}}=a^{p^\gamma}={[a, b]}^{p^\rho}$.
		    \end{itemize}
			Hence, in this sub-case, we get
			$$Z_\delta/[Z_{\delta-1}, G] \cong C_{p^{\gamma-\rho-1}} \times C_{p^{\gamma-\delta}} \times C_{p^\rho}.$$
			
			\noindent \textbf{Sub-case 2(b) ($\gamma-\rho \leq \delta \leq \gamma$).} In this sub-case, we have the following observations.
			\begin{itemize}
				\item $\order(a^{p^\delta}[Z_{\delta-1}, G])=p^{\gamma-\rho-1} \leq \order([a, b][Z_{\delta-1}, G])=p^{\delta-1}$ in $Z_\delta/[Z_{\delta-1}, G]$.
				
				\item $\big\langle [a, b][Z_{\delta-1}, G] \big\rangle \cong C_{p^\delta}$ is a cyclic factor of $Z_\delta/[Z_{\delta-1}, G]$.
				
				\item ${(a^{p^\delta})}^{p^{\gamma-\delta}}=a^{p^\gamma}={[a, b]}^{p^\rho}$.
			\end{itemize}
			 Hence, in this sub-case, we get
			$$Z_\delta/[Z_{\delta-1}, G] \cong C_{p^{\gamma-\delta}} \times C_{p^{\gamma-\delta}} \times C_{p^{\delta-1}}.$$
		\end{enumerate}
		This completes the proof of Lemma~\ref{lemma:2generator5}.
	\end{proof}
	
	We are now in a position to prove Theorem~\ref{thm:Wedderburn2generator5}, which establishes an explicit combinatorial formula for the Wedderburn decomposition of rational group algebras associated with two-generator $p$-groups  $G_{(\alpha, \beta, \gamma; \rho, \sigma)}$ satisfying $(\alpha, \beta, \gamma; \rho, \sigma) \in \tau_{n_5}$.
	\begin{theorem}\label{thm:Wedderburn2generator5}
		Let $G= G_{(\alpha, \beta, \gamma; \rho, \sigma)}$ as defined in~\eqref{present:2generator p-group} be a two-generator $p$-group of class two and of order $p^n$ for an odd prime $p$, such that $(\alpha, \beta, \gamma; \rho, \sigma) \in \tau_{n_5}$. Then we have the following.
		\begin{enumerate}
			\item {\bf Case ($\gamma \leq 2 \rho+1$).} In this case, the Wedderburn decomposition of $\mathbb{Q}G$ is given by
			\begin{align*}
				\mathbb{Q}G \cong &\mathbb{Q} \bigoplus_{m=1}^{\gamma}(p^{m}+p^{m-1}) \mathbb{Q}(\zeta_{p^m}) \bigoplus_{\delta=1}^{\lfloor \frac{\gamma}{2}\rfloor} (p^{2\delta}+p^{2\delta-1}) M_{p^\delta}(\mathbb{Q}(\zeta_{p^\delta}))\\ &\bigoplus_{\delta=1}^{\lfloor \frac{\gamma}{2}\rfloor}\bigoplus_{m=\delta+1}^{\gamma-\delta} (p^{m+\delta}-p^{m+\delta-2})M_{p^\delta}(\mathbb{Q}(\zeta_{p^m})) \bigoplus_{\delta=\lfloor \frac{\gamma}{2}\rfloor +1}^\gamma p^{2(\gamma-\delta)} M_{p^\delta}(\mathbb{Q}(\zeta_{p^\delta})).
			\end{align*}
			
			\item {\bf Case ($\gamma > 2 \rho+1$).} In this case, the Wedderburn decomposition of $\mathbb{Q}G$ is given by
			\begin{align*}
				\mathbb{Q}G \cong &\mathbb{Q} \bigoplus_{m=1}^{\gamma}(p^{m}+p^{m-1}) \mathbb{Q}(\zeta_{p^m}) \bigoplus_{\delta=1}^{\rho} (p^{2\delta}+p^{2\delta-1}) M_{p^\delta}(\mathbb{Q}(\zeta_{p^\delta}))\\ &\bigoplus_{\delta=1}^{\rho}\bigoplus_{m=\delta+1}^{\gamma-\delta} (p^{m+\delta}-p^{m+\delta-2})M_{p^\delta}(\mathbb{Q}(\zeta_{p^m})) \bigoplus_{\delta=\rho+1}^{\gamma-\rho-1} p^{\gamma+\rho-\delta} M_{p^\delta}(\mathbb{Q}(\zeta_{p^{\gamma-\rho}}))\\
				& \bigoplus_{\delta=\gamma-\rho}^\gamma p^{2(\gamma-\delta)} M_{p^\delta}(\mathbb{Q}(\zeta_{p^\delta})).
			\end{align*}
		\end{enumerate}
	\end{theorem}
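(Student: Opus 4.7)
The plan is to apply Theorem~\ref{thm:Wedderburn2generator} directly and to compute the coefficients $a_{p^\lambda}$, $a'_{p^\lambda}$ explicitly by combining the structural description provided by Lemma~\ref{lemma:2generator5} with Yeh's counting formula (Lemma~\ref{lemma:CyclicCounting}). Since $(\alpha,\beta,\gamma;\rho,\sigma)\in\tau_{n_5}$ forces $\alpha=\beta=\sigma=\gamma$, the abelianization satisfies $G/G'\cong C_{p^\gamma}\times C_{p^\gamma}$, so the $\beta+1\leq\lambda\leq\alpha$ range in Theorem~\ref{thm:Wedderburn2generator} collapses and the contribution from $\mathbb{Q}(G/G')$ is exactly $\mathbb{Q}\bigoplus_{m=1}^{\gamma}(p^m+p^{m-1})\mathbb{Q}(\zeta_{p^m})$, which explains the first summand in both cases.

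Next, I would fix $\delta\in\{1,\dots,\gamma\}$ and use Lemma~\ref{lemma:2generator5} to split the analysis according to whether $\delta$ lies in the ``middle range'' $\rho<\delta<\gamma-\rho$ or in the ``otherwise'' range. The case split in the theorem statement is then forced by whether this middle range is empty: since $\rho<\delta<\gamma-\rho$ has an integer solution exactly when $\gamma>2\rho+1$, Case (1) collects only the ``otherwise'' structure $C_{p^{\gamma-\delta}}\times C_{p^{\gamma-\delta}}\times C_{p^\delta}$ (respectively $C_{p^{\gamma-\delta}}\times C_{p^{\gamma-\delta}}\times C_{p^{\delta-1}}$) for all $\delta$, while Case (2) splits the range of $\delta$ into $1\leq\delta\leq\rho$, $\rho<\delta<\gamma-\rho$, and $\gamma-\rho\leq\delta\leq\gamma$, with different abelian structures in each piece.

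For each abelian quotient arising from Lemma~\ref{lemma:2generator5}, I would apply Lemma~\ref{lemma:CyclicCounting} with $k=3$ to produce closed-form counts $a_{p^\lambda}$ and $a'_{p^\lambda}$, and then compute $a_{p^\lambda}-a'_{p^\lambda}$. Concretely, for the ``otherwise'' structure, the numerator telescopes and yields a contribution of the form $(p^{2(\gamma-\delta)}+p^{2(\gamma-\delta)-1})$ in the $\lambda=\delta$ slot and $p^{m+\delta}-p^{m+\delta-2}$ in the $\delta<m\leq\gamma-\delta$ slot; this is precisely what is needed to recover the second and third summands in Case (1), after noting that the range $\delta+1\leq m\leq\gamma-\delta$ is nonempty iff $\delta\leq\lfloor\gamma/2\rfloor$, and that for $\delta>\lfloor\gamma/2\rfloor$ only the $\lambda=\delta$ term survives. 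For the middle range in Case (2), the same computation on $C_{p^{\gamma-\rho}}\times C_{p^{\gamma-\delta}}\times C_{p^\rho}$ versus $C_{p^{\gamma-\rho-1}}\times C_{p^{\gamma-\delta}}\times C_{p^\rho}$ will produce a single surviving field $\mathbb{Q}(\zeta_{p^{\gamma-\rho}})$ with multiplicity $p^{\gamma+\rho-\delta}$, matching the fourth summand.

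The main obstacle I anticipate is the bookkeeping: for each of the three (or, in Case (2), four) ranges of $\delta$, the function $h_p^2(\lambda)$ in Lemma~\ref{lemma:CyclicCounting} changes its piecewise description at three cutoffs determined by the invariant factors, so the differences $a_{p^\lambda}-a'_{p^\lambda}$ must be evaluated carefully on each subinterval of $\lambda$ and many of them vanish. The key simplification is that $Z_\delta/[Z_{\delta-1},G]$ differs from $Z_\delta/[Z_\delta,G]$ only in the last cyclic factor (dropping by one power of $p$), so most subintervals contribute zero and only the intervals where the exponent of the quotient strictly exceeds that of the subquotient produce nonzero terms. Once these cancellations are carried out systematically in each of the two cases, the four (respectively five) summands in the statement appear, completing the proof.
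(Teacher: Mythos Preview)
Your plan is exactly the paper's: apply Theorem~\ref{thm:Wedderburn2generator}, feed in the quotient structures from Lemma~\ref{lemma:2generator5}, split into the two cases according to whether the middle range $\rho<\delta<\gamma-\rho$ is empty, and evaluate $a_{p^\lambda}-a'_{p^\lambda}$ via Lemma~\ref{lemma:CyclicCounting} on each sub-range of $\delta$ (further split by $\delta\le\lfloor\gamma/2\rfloor$ versus $\delta>\lfloor\gamma/2\rfloor$ in the ``otherwise'' regime). One arithmetic slip to fix when you carry it out: for the ``otherwise'' structure with $\delta\le\lfloor\gamma/2\rfloor$ the $\lambda=\delta$ contribution is $p^{2\delta}+p^{2\delta-1}$ (not $p^{2(\gamma-\delta)}+p^{2(\gamma-\delta)-1}$), and for $\delta>\lfloor\gamma/2\rfloor$ it is $p^{2(\gamma-\delta)}$ alone, since there $a'_{p^\delta}=0$ rather than producing a second term.
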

	
	\begin{proof} 
		Since $(\alpha, \beta, \gamma; \rho, \sigma) \in \tau_{n_5}$, we have $\alpha=\beta=\sigma=\gamma$ and $0\leq\rho \leq \gamma$. Thus, from \eqref{present:2generator p-group}, we have
		\begin{equation*}
			G=G_{(\gamma, \gamma, \gamma; \rho, \gamma)}=\langle a, b~:~[a, b]^{p^\gamma}=[a, b, a]=[a, b, b]=1, ~a^{p^\gamma}=[a, b]^{p^\rho}, ~b^{p^\gamma}=1 \rangle.
		\end{equation*}
		
		For $1 \leq \delta \leq \gamma$, we have 
		$$Z_\delta =\big \langle a^{p^\delta},~ b^{p^\delta},~ [a, b]\big\rangle; \quad [Z_\delta, G]=\big\langle {[a, b]}^{p^\delta} \big\rangle \quad \text{and} \quad [Z_{\delta-1}, G]=\big\langle {[a, b]}^{p^{\delta-1}} \big\rangle,$$
		where $Z_{\delta}=Z(\chi)$ for some $\chi\in \Irr_{p^{\delta}}(G)$ and $Z_{\delta-1}=Z(\psi)$ for some $\psi\in \Irr_{p^{\delta-1}}(G)$ (see Lemma \ref{lemma:property2generator}).
		
	    Furthermore, for $1 \leq \delta \leq \gamma$, let $a_{p^m}$ and $a_{p^m}'$ denote the number of cyclic subgroups of  order $p^m$ of $Z_\delta/[Z_\delta, G]$ and $Z_\delta/[Z_{\delta-1}, G]$, respectively. In view of Lemma~\ref{lemma:2generator5}, such groups are divided into two categories according to the values of $\gamma$ and $\rho$, and the proof is therefore considered in the following two cases.

		\begin{enumerate}
			\item {\bf Case ($\gamma \leq 2 \rho+1$).} In this case, observe that there does not exist any $\delta$ such that $\rho < \delta < \gamma-\rho$. Therefore, from Lemma~\ref{lemma:2generator5}), for each $\delta \in \{1,2,\dots,\gamma\}$, we have 
			\[
			Z_\delta/[Z_\delta, G] \cong C_{p^{\gamma-\delta}} \times C_{p^{\gamma-\delta}} \times C_{p^{\delta}} \quad \text{and} \quad Z_\delta/[Z_{\delta-1}, G] \cong C_{p^{\gamma-\delta}} \times C_{p^{\gamma-\delta}} \times C_{p^{\delta-1}}.
			\]
	We complete the rest of the proof in the following two sub-cases. 		
			
		\noindent \textbf{Sub-case ($\gamma-\delta \geq \delta$).}	 
			Note that 
			\[
			\gamma-\delta \geq \delta \iff \delta \leq \Big\lfloor \tfrac{\gamma}{2}\Big\rfloor,
			\]
			where $\lfloor \cdot \rfloor$ denotes the greatest integer function.\\  

			In this sub-case, we have $1 \leq \delta \leq \lfloor \tfrac{\gamma}{2}\rfloor$. By Lemma~\ref{lemma:CyclicCounting}, one can check that $a_{p^m} = a'_{p^m}$ for $1 \leq m \leq \delta-1$. Again from Lemma~\ref{lemma:CyclicCounting}, we have $k=3$ and
			\begin{align*}
				a_{p^\delta} = \frac{p h_p^2(\delta)- h_p^2(\delta-1)}{p-1},
			\end{align*}
			where $h^2_p(\delta)=p^{2\delta}$ and $h^2_p(\delta-1)=p^{2(\delta-1)}$. Hence, 
			\begin{align*}
				a_{p^\delta} = \frac{p^{2\delta+1}-p^{2\delta-2}}{p-1}.
			\end{align*}
			Similarly, 
			\begin{align*}
				a'_{p^\delta} &= \frac{p h_p^2(\delta)- h_p^2(\delta-1)}{p-1} 
				= \frac{p^{2\delta-1}-p^{2\delta-2}}{p-1}.
			\end{align*}
			as $h^2_p(\delta)=p^{2\delta-1}$ and $h^2_p(\delta-1)=p^{2(\delta-1)}$ (see Lemma~\ref{lemma:CyclicCounting}). Therefore, 
			\[
			a_{p^\delta}-a'_{p^\delta}=\frac{p^{2\delta+1}-p^{2\delta-1}}{p-1}=p^{2\delta}+p^{2\delta-1}.
			\]  
			Now, for $\delta+1 \leq m \leq \gamma-\delta$, Lemma~\ref{lemma:CyclicCounting} gives
			\begin{align*}
				a_{p^m} = \frac{p^{m+\delta+1}-p^{m+\delta-1}}{p-1} \quad \text{and} \quad
		       a'_{p^m} = \frac{p^{m+\delta}-p^{m+\delta-2}}{p-1}.
			\end{align*}
			Hence,
			\[
			a_{p^m}-a'_{p^m}= p^{m+\delta}-p^{m+\delta-2}.
			\]   

\noindent \textbf{Sub-case ($\gamma-\delta < \delta$).} Note that
\[
\gamma-\delta < \delta \iff \delta > \Big\lfloor \tfrac{\gamma}{2}\Big\rfloor.
\]
For each $\lfloor \tfrac{\gamma}{2}\rfloor +1 \leq \delta \leq \gamma$, Lemma~\ref{lemma:CyclicCounting} shows that $a_{p^m}=a'_{p^m}$ for all $1 \leq m \leq \delta-1$. Furthermore, $k=3$ and 
\[
a_{p^\delta} = \frac{p h_p^2(\delta)- h_p^2(\delta-1)}{p-1} 
= \frac{p^{2(\gamma-\delta)+1}-p^{2(\gamma-\delta)}}{p-1}
= p^{2(\gamma-\delta)},
\]  
as $h^2_p(\delta)= h^2_p(\delta-1)=p^{2(\gamma-\delta)}$ (see Lemma~\ref{lemma:CyclicCounting}). Also, observe that $a'_{p^{\delta}}=0$.
Thus, the result follows from Theorem~\ref{thm:Wedderburn2generator}.

			\item {\bf Case ($\gamma > 2 \rho+1$).} In this case, there exists some $\delta$ such that $\rho < \delta < \gamma-\rho$. We proceed the proof in the following three sub-cases.		
				
\noindent \textbf{Sub-case ($1 \leq \delta \leq \rho$).} In this sub-case, from Lemma~\ref{lemma:2generator5}, we have
			\[
			Z_\delta/[Z_\delta, G] \cong C_{p^{\gamma-\delta}} \times C_{p^{\gamma-\delta}} \times C_{p^{\delta}}
			\quad \text{and} \quad
			Z_\delta/[Z_{\delta-1}, G] \cong C_{p^{\gamma-\delta}} \times C_{p^{\gamma-\delta}} \times C_{p^{\delta-1}}.
			\]
			Note that in this sub-case, $\gamma-\delta \geq \delta$. Hence, for each $1 \leq \delta \leq \rho$, by Lemma~\ref{lemma:CyclicCounting}, we have $a_{p^m}=a'_{p^m}$ whenever $1 \leq m \leq \delta-1$. Moreover, analogous to the previous case, we have
			\begin{align*}
				a_{p^\delta} = \frac{p^{2\delta+1}-p^{2\delta-2}}{p-1} \quad \text{and} \quad
				a'_{p^\delta} = \frac{p^{2\delta-1}-p^{2\delta-2}}{p-1}.
			\end{align*}
			Therefore, we get
			\[
			a_{p^\delta}-a'_{p^\delta}=\frac{p^{2\delta+1}-p^{2\delta-1}}{p-1}=p^{2\delta}+p^{2\delta-1}.
			\]  
	Further, for $\delta+1 \leq m \leq \gamma-\delta$, analogous to the previous case, Lemma~\ref{lemma:CyclicCounting} yields
			\begin{align*}
				a_{p^m}= \frac{p^{m+\delta+1}-p^{m+\delta-1}}{p-1} \quad \text{and} \quad
				a'_{p^m}= \frac{p^{m+\delta}-p^{m+\delta-2}}{p-1}.
			\end{align*}
			Hence, 
			\[
			a_{p^m}-a'_{p^m}= p^{m+\delta}-p^{m+\delta-2}.
			\]  
			
\noindent \textbf{Sub-case ($\rho+1 \leq \delta \leq \gamma-\rho-1$).}
In this sub-case, from Lemma~\ref{lemma:2generator5}, we have
			\[
			Z_\delta/[Z_\delta, G] \cong C_{p^{\gamma-\rho}} \times C_{p^{\gamma-\delta}} \times C_{p^{\rho}}
			\quad \text{and} \quad
			Z_\delta/[Z_{\delta-1}, G] \cong C_{p^{\gamma-\rho-1}} \times C_{p^{\gamma-\delta}} \times C_{p^{\rho}}.
			\]
			  Since $\gamma-\rho > \rho+1$ and $\delta\in \{\rho+1, \ldots, \gamma-\rho-1\}$, one can see that $\rho+1\leq \gamma-\delta \leq \gamma-\rho-1$.			  
Hence, in this sub-case, $\rho < \gamma-\delta < \gamma-\rho$. By Lemma~\ref{lemma:CyclicCounting}, we have $a_{p^m}=a'_{p^m}$ for $1 \leq m \leq \gamma-\rho-1$. Again from Lemma~\ref{lemma:CyclicCounting}, we have $k=3$ and
			\[
			a_{p^{\gamma-\rho}} = \frac{p h_p^2(\gamma-\rho)- h_p^2(\gamma-\rho-1)}{p-1} 
			= \frac{p^{\gamma-\delta+\rho+1}-p^{\gamma-\delta+\rho}}{p-1}
			= p^{\gamma+\rho-\delta},
			\]  
			as $h^2_p(\gamma-\rho)= h^2_p(\gamma-\rho-1)=p^{\gamma-\delta+\rho}$. Further, note that $a'_{p^{\gamma-\rho}}=0$. 
			
\noindent \textbf{Sub-case ($\gamma-\rho \leq \delta \leq \gamma$).} In this sub-case, we have
			\[
			Z_\delta/[Z_\delta, G] \cong C_{p^{\gamma-\delta}} \times C_{p^{\gamma-\delta}} \times C_{p^{\delta}} \quad \text{and} \quad
			Z_\delta/[Z_{\delta-1}, G] \cong C_{p^{\gamma-\delta}} \times C_{p^{\gamma-\delta}} \times C_{p^{\delta-1}}
			\]
			(see Lemma~\ref{lemma:2generator5}). In this range of $\delta$, we have $\gamma-\delta \leq \delta$. Now,  by Lemma~\ref{lemma:CyclicCounting}, we again obtain $a_{p^m}=a'_{p^m}$ for $1 \leq m \leq \delta-1$. Furthermore, we have $k=3$ and
			\[
			a_{p^\delta} = \frac{p h_p^2(\delta)- h_p^2(\delta-1)}{p-1} 
			= \frac{p^{2(\gamma-\delta)+1}-p^{2(\gamma-\delta)}}{p-1}
			= p^{2(\gamma-\delta)},
			\]  
			as $h^2_p(\delta)= h^2_p(\delta-1)=p^{2(\gamma-\delta)}$ (see Lemma~\ref{lemma:CyclicCounting}). Again, note that $a'_{p^{\delta}}=0$. Thus, the result follows from Theorem~\ref{thm:Wedderburn2generator}.  
		\end{enumerate}
		This completes the proof of Theorem~\ref{thm:Wedderburn2generator5}.
	\end{proof}
	
	Next, we have Corollary~\ref{cor:2generators5}.
	\begin{corollary}\label{cor:2generators5}
		Let $G_1 = G_{(\gamma, \gamma, \gamma; \rho_1, \gamma)}$ and 
		$G_2 = G_{(\gamma, \gamma, \gamma; \rho_2, \gamma)}$ be $p$-groups of class two, each generated by two elements and of order $p^n$, for an odd prime $p$, as defined in~\eqref{present:2generator p-group}. Suppose that $\frac{\gamma-1}{2} \leq \min\{\rho_1,\, \rho_2\}$. Then $\mathbb{Q}G_1 \cong \mathbb{Q}G_2$.
	\end{corollary}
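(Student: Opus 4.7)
The plan is to reduce the statement directly to Case~(1) of Theorem~\ref{thm:Wedderburn2generator5}. First, I would rewrite the hypothesis $\tfrac{\gamma-1}{2}\leq\min\{\rho_1,\rho_2\}$ in the equivalent form $\gamma\leq 2\rho_i+1$ for $i=1,2$. This is precisely the inequality characterizing Case~(1) of Theorem~\ref{thm:Wedderburn2generator5}, so both $\mathbb{Q}G_1$ and $\mathbb{Q}G_2$ admit the Wedderburn decomposition supplied by that case.

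The crucial observation is then that the explicit formula given in Case~(1),
\[
\mathbb{Q}G \cong \mathbb{Q} \bigoplus_{m=1}^{\gamma}(p^{m}+p^{m-1}) \mathbb{Q}(\zeta_{p^m}) \bigoplus_{\delta=1}^{\lfloor \gamma/2\rfloor} (p^{2\delta}+p^{2\delta-1}) M_{p^\delta}(\mathbb{Q}(\zeta_{p^\delta})) \bigoplus_{\delta=1}^{\lfloor \gamma/2\rfloor}\bigoplus_{m=\delta+1}^{\gamma-\delta} (p^{m+\delta}-p^{m+\delta-2})M_{p^\delta}(\mathbb{Q}(\zeta_{p^m})) \bigoplus_{\delta=\lfloor \gamma/2\rfloor +1}^\gamma p^{2(\gamma-\delta)} M_{p^\delta}(\mathbb{Q}(\zeta_{p^\delta})),
\]
depends only on the parameters $p$ and $\gamma$, and not on $\rho$. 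Since $G_1$ and $G_2$ share both the prime $p$ and the value $\gamma$, their Wedderburn decompositions agree summand by summand, which yields $\mathbb{Q}G_1 \cong \mathbb{Q}G_2$.

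There is essentially no obstacle here, since all the substantive computation was absorbed into the proof of Theorem~\ref{thm:Wedderburn2generator5}; the point to highlight is \emph{why} the $\rho$-dependence vanishes. The reason, traced back to Lemma~\ref{lemma:2generator5}, is that under $\gamma\leq 2\rho+1$ there is no $\delta$ satisfying $\rho<\delta<\gamma-\rho$, so for every $\delta\in\{1,\ldots,\gamma\}$ the decompositions of $Z_\delta/[Z_\delta,G]$ and $Z_\delta/[Z_{\delta-1},G]$ fall into the ``otherwise'' branch of Lemma~\ref{lemma:2generator5}, whose cyclic-factor structure is determined solely by $\delta$ and $\gamma$. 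This uniformity forces the Wedderburn summands to be independent of the particular choice of $\rho_1$ or $\rho_2$, completing the argument.
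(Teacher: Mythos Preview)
Your proposal is correct and follows essentially the same approach as the paper, which simply states that the result follows directly from the first case of Theorem~\ref{thm:Wedderburn2generator5}. Your additional explanation of why the $\rho$-dependence disappears (via the ``otherwise'' branch of Lemma~\ref{lemma:2generator5}) is a helpful elaboration but not required for the argument.
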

	\begin{proof}
		It follows directly from the first case of Theorem~\ref{thm:Wedderburn2generator5}.
	\end{proof}
	
	We conclude this section with the following example. This example shows that two non-isomorphic, two-generator $p$-groups of class $2$ can have isomorphic rational group algebras if they satisfy the hypothesis of Corollary~\ref{cor:2generators5}. However, this need not hold if the hypothesis of Corollary~\ref{cor:2generators5} is relaxed. 
	
	\begin{example}\label{example:2generator5}
		\textnormal{
			Let 
			\[
			G_1 = \langle a, b : [a, b]^{9} = [a, b, a] = [a, b, b] = 1,\; a^{9} = 1,\; b^{9} = 1 \rangle
			\]
			and
			\[
			G_2 = \langle a, b : [a, b]^{9} = [a, b, a] = [a, b, b] = 1,\; a^{9} = {[a, b]}^{3},\; b^{9} = 1 \rangle
			\]
			be two groups.  Note that both $G_1$ and $G_2$ are two-generator $3$-groups of nilpotency class $2$ and of order $729$, with $G_1 = G_{(2,2,2;2,2)}$ and $G_2 = G_{(2,2,2;1,2)}$
			(see \eqref{present:2generator p-group}). Hence, by the first case of Theorem~\ref{thm:Wedderburn2generator5}, we obtain
			$$\mathbb{Q}G_1 \cong \mathbb{Q}G_2  \cong \mathbb{Q} \oplus 4\mathbb{Q}(\zeta_3) \oplus 12 \mathbb{Q}(\zeta_9) \oplus 9 M_3(\mathbb{Q}(\zeta_3)) \oplus M_9(\mathbb{Q}(\zeta_9)). $$
			Next, let 
			\[
			G_3 = \langle a, b : [a, b]^{9} = [a, b, a] = [a, b, b] = 1,\; a^{9} = [a, b],\; b^{9} = 1 \rangle
			\]
			be a group. Note that $G_3$ is a two-generator $3$-groups of nilpotency class $2$ and of order $729$, with $G_3 = G_{(2,2,2;0,2)}$ (see \eqref{present:2generator p-group}). Hence, by the second case of Theorem~\ref{thm:Wedderburn2generator5}, we obtain
			$$\mathbb{Q}G_3 \cong \mathbb{Q} \oplus 4\mathbb{Q}(\zeta_3) \oplus 12 \mathbb{Q}(\zeta_9) \oplus 3 M_3(\mathbb{Q}(\zeta_9)) \oplus M_9(\mathbb{Q}(\zeta_9)). $$
			Furthermore, $G_1$, $G_2$ and $G_3$ correspond to $\mathrm{SmallGroup}(729,24)$, $\mathrm{SmallGroup}(729,25)$ and $\mathrm{SmallGroup}(729,60)$, respectively, in the {\sf GAP} SmallGroups library. The above Wedderburn decomposition can also be verified using the \textsc{Wedderga} package in {\sf GAP}.
		}
	\end{example}

	\section{Nested GVZ $p$-groups of arbitrarily large nilpotency class}\label{sec:GVZarbitrary}
	
	It is known that GVZ $p$-groups exist with arbitrarily large nilpotency class. In this section, we compute the Wedderburn decomposition of rational group algebras corresponding to two distinct families of nested GVZ $p$-groups with arbitrarily large nilpotency class. In Subsection~\ref{subsec:GVZ1}, we determine the Wedderburn decomposition of rational group algebras of a family of nested GVZ $p$-groups of arbitrarily large nilpotency class and exponent $p$. In Subsection~\ref{subsec:GVZ2}, we deal with the Wedderburn decomposition of rational group algebras for a family of nested GVZ $p$-groups of arbitrarily large nilpotency class and arbitrarily large exponent.
	
	\subsection{Nested GVZ $p$-groups of arbitrary large nilpotency class and exponent $p$}\label{subsec:GVZ1}
	We begin with a family of nested GVZ $p$-groups introduced by Nenciu~\cite{NenciuGVZ2}. The members of this family are $p$-groups of exponent $p$. For every $n \geq 1$, we construct a group $G_n$, which is a nested GVZ-group of order $p^{2n+1}$, exponent $p$, and nilpotency class $n+1$, where $p > n+1$ is a prime. 
	
	Let $G_1$ be the extra-special $p$-group of order $p^3$ and exponent $p$. We may describe $G_1$ as
	$$G_1=(\langle b_0 \rangle \times \langle a_1 \rangle) \rtimes \langle b_1 \rangle,$$
	with the relation $[b_1, a_1]=b_0$. Assume that $G_{n-1}$ is defined. Now, we construct $G_n$ by setting
	$$G_n=(G_{n-1} \times \langle a_n \rangle) \rtimes \langle b_n \rangle,$$
	where $a_n^p=b_n^p,~ [b_n, a_1]=b_{n-1},~ [a_2, b_n]=b_{n-2}, \ldots, [a_{n-1}, b_n]=b_1,~ [a_n, b_n]=b_0$, and all remaining commutators are trivial. 
	
	In terms of generators and relations, we obtain
	\begin{equation}\label{presentation:GVZ1}
		G_n=\langle a_1, a_2, \ldots, a_n, b_0, b_1, \ldots, b_n \rangle,
	\end{equation}
	where each generator has order $p$, $[a_i, a_j]=[b_i, b_j]=1$ for all $0\leq i<j\leq n$, and
	\begin{equation*}
		[a_i, b_j] = \begin{cases}
			1 & \text{if } i > j, \\
			b_{j-1} & \text{if } 1 < i \leq j,\\
			b_{j-1}^{p-1} & \text{if } 1=i \leq j.
		\end{cases}
	\end{equation*}
	
	We now recall the following result from~\cite{NenciuGVZ2}.
	\begin{lemma}\cite[Theorem~5.1]{NenciuGVZ2}\label{lemma:propertyGVZ1}
		For every $n \geq 1$, the group $G_n$ defined in~\eqref{presentation:GVZ1} is a nested GVZ-group with the following properties.
		\begin{enumerate}
			\item $\cd(G_n)=\{1, p, \ldots, p^n\}$.
			\item If $\chi \in \Irr(G_n)$ with $\chi(1)=p^r$ for some $r \in \{0,1,\ldots,n\}$, then
			\begin{enumerate}
				\item $Z(\chi)=\langle a_{r+1}, \ldots, a_n, b_0, b_1, \ldots, b_{n-r}\rangle$,
				\item $\langle b_0, b_1, \ldots, b_{n-r-1}\rangle \subseteq \ker(\chi)$.
			\end{enumerate}
		\end{enumerate}
	\end{lemma}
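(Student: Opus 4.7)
The plan is to prove Lemma~\ref{lemma:propertyGVZ1} by induction on $n$, using the recursive construction $G_n=(G_{n-1}\times\langle a_n\rangle)\rtimes\langle b_n\rangle$. The base case $n=1$ reduces to the classical extra-special $p$-group of order $p^3$ and exponent $p$: its character degrees are $\{1,p\}$, the unique irreducible of degree $p$ has center $\langle b_0\rangle=Z(G_1)$, and every linear character contains $\langle b_0\rangle=G_1'$ in its kernel. For the inductive step, the first task is to harvest the structural data directly from the commutator relations: iterating $[a_i,b_j]=b_{j-1}^{\pm1}$ (nontrivial only when $i\leq j$) yields $\gamma_k(G_n)=\langle b_0,\ldots,b_{n-k}\rangle$, $Z(G_n)=\langle b_0\rangle$, and $\nil(G_n)=n+1$. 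For $r\in\{0,1,\ldots,n\}$, set $H_r:=\langle a_{r+1},\ldots,a_n,b_0,\ldots,b_{n-r}\rangle$; one checks directly that each $H_r$ is a normal abelian subgroup of $G_n$ with $[G_n:H_r]=p^{2r}$, and that $\langle b_0,\ldots,b_{n-r-1}\rangle\subseteq[H_r,G_n]$.

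The heart of the proof is a Clifford-theoretic analysis of $\Irr(G_n)$ with respect to the normal subgroup $N=G_{n-1}\times\langle a_n\rangle$ of index $p$. Every $\psi\in\Irr(N)$ factors as $\psi=\phi\otimes\lambda$ with $\phi\in\Irr(G_{n-1})$ of some degree $p^s$ (by induction) and $\lambda\in\Irr(\langle a_n\rangle)$. Since $\langle b_n\rangle$ has prime order $p$, every $b_n$-invariant $\psi$ extends to $p$ distinct characters of $G_n$ of the same degree $p^s$, while each non-invariant $\psi$ generates a $b_n$-orbit of length $p$ that induces irreducibly to a character of $G_n$ of degree $p^{s+1}$. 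The action of $b_n$ on $N$, computed from $b_n^{-1}a_ib_n=a_ib_{n-1}^{\pm1}$ and $b_n^{-1}b_jb_n=b_j$, gives an explicit invariance criterion on $(\phi,\lambda)$ in terms of the values of $\phi$ and $\lambda$ at powers of $b_{n-1}$. Combining this with the inductive descriptions of $Z(\phi)$ and $\ker(\phi)$, one checks that the extensions of invariant $\psi$ of degree $p^s$ have center exactly $H_s$, and the induced characters from non-invariant $\psi$ of degree $p^s$ have center exactly $H_{s+1}$. This yields $\cd(G_n)=\{1,p,\ldots,p^n\}$ and part (2)(a) simultaneously.

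Once $Z(\chi)=H_r$ is established, part (2)(b) is automatic: for any fully ramified character $[Z(\chi),G_n]\subseteq\ker(\chi)$ (since $Z(\chi)$ acts by scalars on the underlying representation), and the preliminary structural computation gave $\langle b_0,\ldots,b_{n-r-1}\rangle\subseteq[H_r,G_n]=[Z(\chi),G_n]$. The nested and GVZ properties follow simultaneously: the subgroups $H_r$ form a strictly decreasing chain $H_0=G_n\supsetneq H_1\supsetneq\cdots\supsetneq H_n=Z(G_n)$ linearly ordered by inclusion, and the equality $\chi(1)^2=p^{2r}=[G_n:H_r]$ together with \cite[Corollary~2.30]{I} forces every $\chi\in\Irr_{p^r}(G_n)$ to vanish off $Z(\chi)$.

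The main obstacle is the bookkeeping inside the Clifford step: one must identify, for each pair $(\phi,\lambda)$ with $\phi\in\Irr_{p^s}(G_{n-1})$, whether $\phi\otimes\lambda$ is $b_n$-invariant, and then verify in both the extension and induction cases that the resulting center equals exactly $H_s$ (respectively $H_{s+1}$), not merely a subgroup of it or something larger. This reduces to evaluating the $b_n$-invariance criterion using the inductive kernel data $\langle b_0,\ldots,b_{n-1-s}\rangle\subseteq\ker(\phi)$, and then applying the standard formulas for the characters of an induced representation and of an extension; a degree count then matches each $H_r$ to the correct population of irreducibles, closing the induction.
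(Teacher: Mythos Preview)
The paper does not prove this lemma; it quotes it verbatim as \cite[Theorem~5.1]{NenciuGVZ2} and uses it as a black box in the proof of Theorem~\ref{thm:WedderburnGVZ1}. There is therefore no ``paper's own proof'' to compare against.

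That said, your inductive Clifford-theoretic plan is the natural one and is essentially the route taken in Nenciu's original paper. A couple of small points to tighten. First, your lower-central-series formula $\gamma_k(G_n)=\langle b_0,\ldots,b_{n-k}\rangle$ is off by one under the usual convention $\gamma_1=G$; from $[a_i,b_j]=b_{j-1}^{\pm1}$ one gets $\gamma_2=G_n'=\langle b_0,\ldots,b_{n-1}\rangle$ and in general $\gamma_k=\langle b_0,\ldots,b_{n-k+1}\rangle$. This does not affect the rest of the argument. Second, in the Clifford step the conjugation by $b_n$ moves $a_n$ as well as the $a_i$ with $i<n$ (since $[a_n,b_n]=b_{n-1}$), so the invariance criterion for $\psi=\phi\otimes\lambda$ involves both $\phi(b_{n-1})$ and $\lambda$; make sure your bookkeeping tracks this cross-term when you match the centers to $H_s$ and $H_{s+1}$. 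With those adjustments the outline goes through.
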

	
	We are now ready to prove the following theorem, which gives a combinatorial formula for the Wedderburn decomposition of rational group algebras of the family of nested GVZ $p$-groups described in~\eqref{presentation:GVZ1}.
	\begin{theorem}\label{thm:WedderburnGVZ1}
		Let $G=G_n$ be as defined in~\eqref{presentation:GVZ1}, a nested GVZ $p$-group of order $p^{2n+1}$, exponent $p$, and class $n+1$, where $p> n+1$ is an odd prime. Then the Wedderburn decomposition of $\mathbb{Q}G$ is given by
		$$\mathbb{Q}G \cong \mathbb{Q} \bigoplus (1+p+p^2+\cdots+p^n)\,\mathbb{Q}(\zeta_p) \bigoplus_{r=1}^n p^{\,n-r}\, M_{p^r}(\mathbb{Q}(\zeta_p)).$$
	\end{theorem}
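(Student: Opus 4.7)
The plan is to apply Theorem~\ref{thm:WedderburnGVZ} to $G=G_n$, using the structural information in Lemma~\ref{lemma:propertyGVZ1}. First, I handle the linear contribution: from the commutator relations in~\eqref{presentation:GVZ1}, $G'=\langle b_0,b_1,\dots,b_{n-1}\rangle$ has order $p^n$, so $G/G'\cong C_p^{\,n+1}$ on the images of $a_1,\dots,a_n,b_n$. The number of cyclic subgroups of order $p$ in $C_p^{\,n+1}$ is $(p^{n+1}-1)/(p-1)=1+p+\cdots+p^n$, so Perlis--Walker (Theorem~\ref{Perlis-walker}) gives
\[
\mathbb{Q}(G/G')\;\cong\;\mathbb{Q}\;\oplus\;(1+p+\cdots+p^n)\,\mathbb{Q}(\zeta_p),
\]
accounting for the first two summands of the claimed formula.

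Next, fix $r\in\{1,\dots,n\}$. By Lemma~\ref{lemma:propertyGVZ1}(2a), $Z_r=\langle a_{r+1},\dots,a_n,b_0,\dots,b_{n-r}\rangle$. Enumerating $[a_i,G]$ for $i\geq r+1$ and $[b_k,G]$ for $k\leq n-r$ from~\eqref{presentation:GVZ1}, one obtains
\[
[Z_r,G]=\langle b_0,\dots,b_{n-r-1}\rangle\qquad\text{and}\qquad[Z_{r-1},G]=\langle b_0,\dots,b_{n-r}\rangle,
\]
both visibly contained in $Z_r$. Since $G$ has exponent $p$, the quotients
\[
Z_r/[Z_r,G]\;\cong\;C_p^{\,n-r+1},\qquad Z_r/[Z_{r-1},G]\;\cong\;C_p^{\,n-r}
\]
are elementary abelian, so $m_r=p$ for every $r$, while $m_r'=p$ for $r<n$ and $m_r'=1$ for $r=n$. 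The only divisor of $m_r$ to test is $d_r=p$.

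For $1\leq r<n$ we have $d_r\mid m_r'$, so the degree-$p^r$ contribution is $(a_p-a_p')\,M_{p^r}(\mathbb{Q}(\zeta_p))$, where
\[
a_p-a_p'=\frac{p^{n-r+1}-1}{p-1}-\frac{p^{n-r}-1}{p-1}=p^{n-r}.
\]
For $r=n$, $d_n\nmid m_n'=1$ and the contribution is $a_p\,M_{p^n}(\mathbb{Q}(\zeta_p))=p^{0}\,M_{p^n}(\mathbb{Q}(\zeta_p))$. Combining these pieces with the linear part via Theorem~\ref{thm:WedderburnGVZ} yields exactly the stated decomposition. The principal technical step is the explicit computation of $[Z_r,G]$ and $[Z_{r-1},G]$, which requires a careful index-based case analysis of which commutators $[a_i,b_j]$ and $[b_k,a_i]$ are nontrivial and which initial segments of $b_\ell$'s they generate; once this is in hand the remaining counting is immediate from Lemma~\ref{lemma:CyclicCounting}.
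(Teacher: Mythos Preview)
Your proof is correct and follows essentially the same route as the paper: compute $G'$ and $G/G'$ to obtain the linear part via Perlis--Walker, then use Lemma~\ref{lemma:propertyGVZ1} to identify $Z_r$, $[Z_r,G]$, and $[Z_{r-1},G]$, observe that the relevant quotients are elementary abelian of ranks $n-r+1$ and $n-r$, and feed the resulting cyclic-subgroup counts into Theorem~\ref{thm:WedderburnGVZ}. Your write-up is in fact slightly more explicit than the paper's, since you spell out the difference $a_p-a_p'=p^{\,n-r}$ and treat the boundary case $r=n$ separately, whereas the paper simply invokes Theorem~\ref{thm:WedderburnGVZ} and Lemma~\ref{lemma:CyclicCounting} at that point.
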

	\begin{proof}
		Let $G=G_n$ be defined as above. Note that $G'=\langle b_0, b_1, \ldots, b_{n-1}\rangle$. Hence,
		$$G/G'=\langle a_1G', a_2G', \ldots, a_nG', b_nG'\rangle \cong \underbrace{C_p \times \cdots \times C_p}_{n+1\ \text{factors}}.$$ 
		By Theorem~\ref{Perlis-walker} and Lemma~\ref{lemma:CyclicCounting}, it follows that
		$$\mathbb{Q}(G/G') \cong \mathbb{Q} \bigoplus (1+p+p^2+\cdots+p^n)\,\mathbb{Q}(\zeta_p).$$
		
		Moreover, $\cd(G)=\{1,p,\ldots,p^n\}$. For $\chi \in \nl(G)$ with $\chi(1)=p^r$ ($1 \leq r \leq n$), set $Z_{r}=Z(\chi)$. From Lemma~\ref{lemma:propertyGVZ1}, we obtain
		$$Z_{r}=\langle a_{r+1}, \ldots, a_n, b_0, b_1, \ldots, b_{n-r}\rangle,$$
		and
		$$[Z_{r}, G]=\langle b_0, b_1, \ldots, b_{n-r-1}\rangle.$$
		Hence, we have the quotient group
		$$Z_{r}/[Z_{r}, G] = \big \langle a_{r+1}[Z_{r}, G], \ldots, a_n[Z_{r}, G],~ b_{n-r}[Z_{r}, G] \big \rangle \cong \underbrace{C_p \times \cdots \times C_p}_{n-r+1\ \text{factors}}.$$
		Also, since $[Z_{r-1}, G]=\langle b_0, b_1, \ldots, b_{n-r}\rangle$, we obtain the quotient group
		$$Z_{r}/[Z_{r-1}, G] \cong \underbrace{C_p \times \cdots \times C_p}_{n-r\ \text{factors}}.$$
		
		Thus, by applying Theorem~\ref{thm:WedderburnGVZ} together with Lemma~\ref{lemma:CyclicCounting}, the claimed decomposition follows. This completes the proof of Theorem~\ref{thm:WedderburnGVZ1}.
	\end{proof}

	\subsection{Nested GVZ $p$-groups of arbitrary large nilpotency class and arbitrary large exponent}\label{subsec:GVZ2}
	
	In the previous subsection, we considered a family of nested GVZ $p$-groups of exponent $p$. We now turn to another important family, introduced by Lewis~\cite{LewisGVZ2}, which provides nested GVZ $p$-groups of arbitrarily large nilpotency class and simultaneously arbitrarily large exponent. Let $p$ be an odd prime and $n$ a positive integer. Denote by $C_{p^{n+1}}$ the cyclic group of order $p^{n+1}$. It is well known that the Sylow $p$-subgroup of $\aut(C_{p^{n+1}})$ is cyclic of order $p^n$. Define
	$$G_n=C_{p^{n+1}} \rtimes \Syl_p(\aut(C_{p^{n+1}})).$$
	Then $|G_n|=p^{2n+1}$. Let $C_{p^{n+1}}=\langle x \rangle$ and $\Syl_p(\aut(C_{p^{n+1}}))=\langle y \rangle$. With these generators, a presentation of $G_n$ is
	\begin{equation}\label{presentation:GVZ2}
		G_n=\langle x, y : x^{p^{n+1}}=y^{p^n}=1,~ y^{-1}xy=x^{1+p}\rangle.
	\end{equation}
	Here, $G_n'=\langle x^p \rangle \cong C_{p^n}$ and $Z(G_n)=\langle x^{p^n} \rangle \cong C_p$. Moreover, $\cd(G)=\{1, p, p^2, \ldots, p^n\}$. Next, we have Lemma~\ref{lemma:propertyGVZ2}, which summaries some properties of $G_n$.
	
	\begin{lemma}\cite[Example~3]{LewisGVZ2}\label{lemma:propertyGVZ2}
		For every $n \geq 1$, the group $G=G_n$ defined in~\eqref{presentation:GVZ2} is a nested GVZ $p$-group of order $p^{2n+1}$ with the following properties:
		\begin{enumerate}
			\item $G$ has nilpotency class $n+1$ and exponent $p^{n+1}$.
			\item $\cd(G)=\{1, p, \ldots, p^n\}$.
			\item If $\chi \in \Irr(G)$ with $\chi(1)=p^r$ for some $r \in \{0,1,\ldots,n\}$, then 
			$$Z(\chi)=\langle x^{p^r}, y^{p^r}\rangle \quad \text{and} \quad [Z(\chi), G]=\langle x^{p^{r+1}} \rangle.$$
		\end{enumerate}
	\end{lemma}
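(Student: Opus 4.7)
The plan is to verify all assertions by direct computation from the presentation~\eqref{presentation:GVZ2}, combined with an application of Clifford theory to the normal cyclic subgroup $N = \langle x\rangle$ of index $p^n$. The key preliminary identity, proved by induction on $j$, is $y^{-j} x y^j = x^{(1+p)^j}$, which gives $[x, y^j] = x^{(1+p)^j - 1}$; since $p$ is odd, the lifting-the-exponent lemma yields $v_p((1+p)^j - 1) = v_p(j) + 1$. This identity drives all subsequent commutator calculations.

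For part~(1), it follows that $\gamma_k(G) = \langle x^{p^{k-1}} \rangle$ for $k \geq 2$, so $\gamma_{n+1}(G) = \langle x^{p^n}\rangle$ is nontrivial while $\gamma_{n+2}(G) = 1$, giving nilpotency class $n+1$. Since $x$ has order $p^{n+1}$, the exponent is at least $p^{n+1}$; the reverse inequality follows by writing an arbitrary element as $x^a y^b$ and using the collection formula together with the commutator identity above to verify $(x^a y^b)^{p^{n+1}} = 1$.

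For part~(2), let $\lambda_k \in \Irr(N)$ be the character $x \mapsto \zeta_{p^{n+1}}^{k}$. The $G$-action satisfies $\lambda_k^y = \lambda_{k(1+p)}$, so for $k \neq 0$ with $v_p(k) = r \in \{0, 1, \ldots, n\}$, the inertia group is $I_G(\lambda_k) = \langle x, y^{p^{n-r}}\rangle$. Since $I_G(\lambda_k)/N$ is cyclic, $\lambda_k$ extends to $I_G(\lambda_k)$, and by Clifford's correspondence these extensions (tensored with the $p^r$ linear characters of $I_G(\lambda_k)/N$) induce to distinct irreducible characters of $G$ of degree $[G : I_G(\lambda_k)] = p^{n-r}$. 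Letting $r$ vary over $\{0, 1, \ldots, n\}$ yields $\cd(G) = \{1, p, \ldots, p^n\}$ together with an explicit description of every irreducible character.

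For part~(3), fix $\chi \in \Irr_{p^s}(G)$, so that $\chi$ is induced from an extension of some $\lambda_k$ with $v_p(k) = n-s$, and set $H = \langle x^{p^s}, y^{p^s}\rangle$. Normality follows from $y^{-1} x^{p^s} y \in \langle x^{p^s}\rangle$, and the semidirect-product structure gives $|H| = p^{2n+1-2s}$; the relations $[x^{p^s}, y] = x^{p^{s+1}}$ and $v_p(1 - (1+p)^{p^s}) = s+1$ then directly yield $[H, G] = \langle x^{p^{s+1}}\rangle$. The main obstacle is proving $\chi(g) = 0$ for $g \notin H$: this reduces, via the induced-character formula applied to coset representatives of $I_G(\lambda_k)$ in $G$, to cancellation in a geometric sum of nontrivial $p^s$-th roots of unity coming from the conjugation action on the extension of $\lambda_k$. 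Once this vanishing is established, it simultaneously verifies that $G$ is a GVZ-group, forces $|Z(\chi)| = |G|/\chi(1)^2 = |H|$ and hence $Z(\chi) = H$, and --- since the subgroups $\langle x^{p^t}, y^{p^t}\rangle$ form a descending chain in $t$ --- confirms that $G$ is in fact nested.
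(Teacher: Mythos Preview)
The paper does not prove this lemma; it is quoted from \cite[Example~3]{LewisGVZ2} without argument, so there is no proof in the paper to compare against.

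Assessed on its own terms, your plan is sound. The conjugation identity $y^{-j}xy^{j}=x^{(1+p)^{j}}$ together with $v_{p}\bigl((1+p)^{j}-1\bigr)=v_{p}(j)+1$ gives $\gamma_{k}(G)=\langle x^{p^{k-1}}\rangle$ and hence part~(1); the Clifford analysis over $N=\langle x\rangle$ is standard and yields part~(2) with the inertia groups you state; and the identifications of $H=\langle x^{p^{s}},y^{p^{s}}\rangle$, its order, and $[H,G]=\langle x^{p^{s+1}}\rangle$ are immediate from the commutator formula. For the vanishing step you flag as the main obstacle: with $\psi$ a linear extension of $\lambda_{k}$ (where $v_{p}(k)=n-s$) to $I=\langle x,y^{p^{s}}\rangle$ and coset representatives $y^{i}$ for $0\le i<p^{s}$, the induced-character formula gives $\chi(x^{a}y^{b})=0$ automatically when $p^{s}\nmid b$, while for $p^{s}\mid b$ it reduces to a nonzero constant times $\sum_{i=0}^{p^{s}-1}\zeta_{p^{n+1}}^{\,ka(1+p)^{i}}$. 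Setting $t=v_{p}(a)$, the root $\omega=\zeta_{p^{n+1}}^{ka}$ is a primitive $p^{s+1-t}$-th root of unity, and since $\{(1+p)^{i}\bmod p^{s+1-t}:0\le i<p^{s}\}$ covers the arithmetic progression $\{1+pm:0\le m<p^{s-t}\}$ exactly $p^{t}$ times each, the sum collapses to $p^{t}\omega\sum_{m=0}^{p^{s-t}-1}(\omega^{p})^{m}$, a geometric series in the nontrivial root $\omega^{p}$ which vanishes whenever $t<s$. From this vanishing, the equality $|G/H|=p^{2s}=\chi(1)^{2}$ forces $Z(\chi)=H$, and the chain structure of the subgroups $\langle x^{p^{t}},y^{p^{t}}\rangle$ gives nestedness, exactly as you outline.
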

	
	We now establish the Wedderburn decomposition of rational group algebras for this family.
	
	\begin{theorem}\label{thm:WedderburnGVZ2}
		Let $G=G_n=\langle x, y : x^{p^{n+1}}=y^{p^n}=1,~ y^{-1}xy=x^{1+p}\rangle$ be a nested GVZ $p$-group of order $p^{2n+1}$, exponent $p^{n+1}$, and nilpotency class $n+1$, where $p$ is an odd prime. Then the Wedderburn decomposition of $\mathbb{Q}G$ is given by
		\begin{align*}
			\mathbb{Q}G \cong \mathbb{Q} \bigoplus (p+1)\mathbb{Q}(\zeta_p) \bigoplus_{r=2}^{n}p \mathbb{Q}(\zeta_{p^r}) \bigoplus_{r=1}^{n-1} pM_{p^r}(\mathbb{Q}(\zeta_{p})) \bigoplus_{r=1}^{n-2}\bigoplus_{m=2}^{n-r} (p-1)M_{p^r}(\mathbb{Q}(\zeta_{p^m})) \bigoplus M_{p^n}(\mathbb{Q}(\zeta_{p})).
		\end{align*}
	\end{theorem}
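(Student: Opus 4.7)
The plan is to apply Theorem~\ref{thm:WedderburnGVZ} directly, so the work reduces to identifying, for each $r$, the abelian groups $Z_{r}/[Z_{r},G]$ and $Z_{r}/[Z_{r-1},G]$, their exponents $m_{r}$ and $m_{r}'$, and the counts $a_{p^{m}}, a'_{p^{m}}$ of cyclic subgroups of each order. From Lemma~\ref{lemma:propertyGVZ2} I already have $Z_{r}=\langle x^{p^{r}},y^{p^{r}}\rangle$ and $[Z_{r},G]=\langle x^{p^{r+1}}\rangle$ for $0\leq r\leq n$, together with $\cd(G)=\{1,p,\ldots,p^{n}\}$, so the relevant data are fully determined by the presentation.

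The key structural step is to show that $Z_{r}/[Z_{r},G]\cong C_{p}\times C_{p^{n-r}}$ for $1\leq r\leq n-1$, with the degenerate case $Z_{n}/[Z_{n},G]\cong C_{p}$, and $Z_{r}/[Z_{r-1},G]\cong C_{p^{n-r}}$ for $1\leq r\leq n$. I would argue this by an order count: since $|G/Z_{r}|=p^{2r}$, one has $|Z_{r}|=p^{2n+1-2r}$, and $|[Z_{r},G]|=|\langle x^{p^{r+1}}\rangle|=p^{n-r}$, giving $|Z_{r}/[Z_{r},G]|=p^{n+1-r}$; similarly $|Z_{r}/[Z_{r-1},G]|=p^{n-r}$. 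In the first quotient, the image of $x^{p^{r}}$ has order $p$ (as $(x^{p^{r}})^{p}\in[Z_{r},G]$) while the image of $y^{p^{r}}$ has order exactly $p^{n-r}$ (because $\langle x\rangle\cap\langle y\rangle=1$ in $G=\langle x\rangle\rtimes\langle y\rangle$, so no power $(y^{p^{r}})^{k}$ with $k<p^{n-r}$ can lie in $\langle x^{p^{r+1}}\rangle$), and the product of these cyclic orders matches $p^{n+1-r}$. In the second quotient, the generator $x^{p^{r}}$ becomes trivial, leaving the cyclic group generated by $y^{p^{r}}$, of the asserted order.

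Having identified these groups, the computation is routine. For the linear part, $G/G'=G/\langle x^{p}\rangle\cong C_{p}\times C_{p^{n}}$, and Theorem~\ref{Perlis-walker} together with Lemma~\ref{lemma:CyclicCounting} (with $k=2$, $\alpha_{1}=1$, $\alpha_{2}=n$) yield $a_{p}=p+1$ and $a_{p^{m}}=p$ for $2\leq m\leq n$, giving the $\mathbb{Q}\oplus(p+1)\mathbb{Q}(\zeta_{p})\oplus\bigoplus_{r=2}^{n}p\,\mathbb{Q}(\zeta_{p^{r}})$ summand. For each $1\leq r\leq n-1$ we have $m_{r}=m_{r}'=p^{n-r}$, so the first sum in Theorem~\ref{thm:WedderburnGVZ} is empty and in the second sum Lemma~\ref{lemma:CyclicCounting} gives $a_{p}-a'_{p}=(p+1)-1=p$ and $a_{p^{m}}-a'_{p^{m}}=p-1$ for $2\leq m\leq n-r$. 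Finally, for $r=n$ we have $m_{n}=p$ and $m_{n}'=1$, so the first sum contributes $M_{p^{n}}(\mathbb{Q}(\zeta_{p}))$ and the second is empty. Collecting these contributions produces precisely the stated decomposition.

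I expect the main obstacle to be the structural claim in the second paragraph, specifically verifying that $x^{p^{r}}$ and $y^{p^{r}}$ generate independent cyclic factors in $Z_{r}/[Z_{r},G]$ of the correct orders; one must rule out relations between them modulo $\langle x^{p^{r+1}}\rangle$, and this is where the semidirect-product structure $G=\langle x\rangle\rtimes\langle y\rangle$ with trivial intersection of the cyclic factors is genuinely used. The remaining steps (the linear part via Perlis--Walker and the two-case analysis for $r<n$ versus $r=n$) are mechanical applications of Lemma~\ref{lemma:CyclicCounting}, entirely parallel to the arguments in the proof of Theorem~\ref{thm:Wedderburn2generator5}.
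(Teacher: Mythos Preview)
Your proposal is correct and follows essentially the same approach as the paper: identify $G/G'\cong C_p\times C_{p^n}$ via Perlis--Walker, use Lemma~\ref{lemma:propertyGVZ2} to obtain $Z_r/[Z_r,G]\cong C_p\times C_{p^{n-r}}$ and $Z_r/[Z_{r-1},G]\cong C_{p^{n-r}}$, compute the differences $a_{p^m}-a'_{p^m}$ via Lemma~\ref{lemma:CyclicCounting}, and feed everything into Theorem~\ref{thm:WedderburnGVZ}. The only difference is that you supply more detail on the independence of the images of $x^{p^r}$ and $y^{p^r}$ in the quotient (using the semidirect-product structure), which the paper simply asserts.
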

	
	\begin{proof}
		Consider $G=G_n$ as defined in~\eqref{presentation:GVZ2}. Then $G'=\langle x^p \rangle \cong C_{p^n}$, and hence
		$$G/G'=\langle xG', yG'\rangle \cong C_p \times C_{p^n}.$$
		Therefore, by Theorem~\ref{Perlis-walker} and Lemma~\ref{lemma:CyclicCounting}, we obtain
		$$\mathbb{Q}(G/G') \cong \mathbb{Q} \bigoplus (p+1)\mathbb{Q}(\zeta_p) \bigoplus_{r=2}^n p\,\mathbb{Q}(\zeta_{p^r}).$$
		
		Next, note that $\cd(G)=\{1,p,\ldots,p^n\}$. For $\chi \in \nl(G)$ with $\chi(1)=p^r$ ($1 \leq r \leq n$), define $Z_{r}=Z(\chi)$. Then by Lemma~\ref{lemma:propertyGVZ2}, we have 
		$$Z_{r}=\langle x^{p^r}, y^{p^r}\rangle \quad \text{and} \quad [Z_{p^r}, G]=\langle x^{p^{r+1}}\rangle.$$
		Thus, for $1 \leq r \leq n$, we have the quotient group
		$$Z_{r}/[Z_{r}, G] = \big \langle x^{p^r}[Z_{r}, G], ~y^{p^r}[Z_{r}, G] \big \rangle \cong C_p \times C_{p^{\,n-r}}.$$
		Similarly, we have $[Z_{r-1}, G]=\langle x^{p^{r}}\rangle$ (see Lemma~\ref{lemma:propertyGVZ2}). Thus, the quotient group
		$$Z_{r}/[Z_{r-1}, G] =\big \langle y^{p^r}[Z_{r}, G] \big \rangle \cong C_{p^{\,n-r}}.$$

Let $a_{p^s}$ and $a'_{p^s}$ denote the number of cyclic subgroups of order $p^s$ in $Z_{r}/[Z_{r}, G]$ and $Z_{r}/[Z_{r-1}, G]$, respectively. 
For $1\leq r\leq n-1$ and $1\leq s\leq n-r$, by Lemma~\ref{lemma:CyclicCounting}, we have 
$$a_{p^s}=\begin{cases}p+1 & \text{if}~ s=1,\\
                p & \text{if}~ 2\leq s\leq n-r
\end{cases}
\quad \text{and} \quad a'_{p^s}=1. $$
Furthermore, for $r=n$, we have $a_p=1$.  Therefore, by applying Theorem~\ref{thm:WedderburnGVZ}, we obtain the claimed Wedderburn decomposition of $\mathbb{Q}G$. This completes the proof of Theorem~\ref{thm:WedderburnGVZ2}.
	\end{proof}
	
	\begin{remark}
		\textnormal{The family of groups described in~\eqref{presentation:GVZ2} are split metacyclic $p$-groups. The decomposition formula for the Wedderburn decomposition of rational group algebras of this family of groups given in Theorem~\ref{thm:WedderburnGVZ2} can alternatively be deduced from~\cite[Theorem~1]{Ram3}.}
	\end{remark}

\section{Isoclinic nested GVZ $p$-groups}\label{sec:isoGVZ}
In this section, we first establish that the property of being a GVZ-group (respectively, a nested GVZ-group) is preserved under \emph{isoclinism}. Furthermore, we classify all nested GVZ $p$-groups of order at most $p^5$. We then explicitly compute the Wedderburn decomposition of the rational group algebras of all nested GVZ $p$-groups of order $\leq p^5$, where $p$ is an odd prime. These computations, carried out using our main theorem, provide one more concrete illustration of Theorem~\ref{thm:WedderburnGVZ}. We begin with the following definition.

\begin{definition}
	Two finite groups $G$ and $H$ are said to be \emph{isoclinic} if there exist two isomorphisms $\theta : G/Z(G) \longrightarrow H/Z(H)$ and $\phi : G' \longrightarrow H'$
	such that for all $g_1, g_2 \in G$, if $h_1 \in \theta(g_1Z(G))$ and $h_2 \in \theta(g_2Z(G))$, then
	\[
	\phi([g_1, g_2]) = [h_1, h_2].
	\]
\end{definition}

The pair $(\theta, \phi)$ is referred to as an \emph{isoclinism} from $G$ onto $H$. The concept was originally introduced by Hall~\cite{PH} in the context of classifying $p$-groups and may be viewed as a generalization of group isomorphism. It is a standard fact that two isoclinic nilpotent groups share the same nilpotency class.

We now proceed to the proof of Theorem~\ref{thm:isoGVZ}.
\begin{proof}[Proof of Theorem~\ref{thm:isoGVZ}]
	Let $G$ and $H$ be two finite isoclinic groups. Suppose that $G$ is a GVZ-group. Since $G$ and $H$ are isoclinic, there exist two isomorphisms $\theta : G/Z(G) \longrightarrow H/Z(H)$ and $\phi : G' \longrightarrow H'$ such that 
	\[
	\phi([g_1, g_2]) = [h_1, h_2]
	\]
	whenever $\theta(g_1Z(G)) = h_1Z(H)$ and $\theta(g_2Z(G)) = h_2Z(H)$ for $g_1, g_2 \in G$ and $h_1, h_2 \in H$.
	
	Let $h \in H$. Choose $g \in G$ such that $\theta(gZ(G)) = hZ(H)$. Since $\theta$ is surjective, we obtain
	\[
	\phi(\{[g, x] : x \in G\}) = \{\phi([g, x]) : x \in G\} = \{[h, y] : y \in H\}.
	\]
	
	Recall that $G$ is a GVZ-group if and only if $\cl_G(g) = g[g, G]$ for every $g \in G$ (see \cite{Burkett}). In particular,
	\[
	[g, G] = \{[g, x] : x \in G\}.
	\]
	Hence,
	\[
	\phi([g, G]) = \{[h, y] : y \in H\}.
	\]
	This shows that $\{[h, y] : y \in H\}$ is a subgroup of $H'$, i.e., $[h, H] = \{[h, y] : y \in H\}$.
	Consequently, $\cl_H(h) = h[h, H]$, and therefore $H$ is also a GVZ-group.
	
	Next, let $G$ is a nested GVZ-group. Let $h_1, h_2 \in H$, and choose $g_1, g_2 \in G$ such that $\theta(g_1Z(G)) = h_1Z(H)$ and $\theta(g_2Z(G)) = h_2Z(H)$. Since $G$ is a nested GVZ-group, it follows that either $[g_1, G] \leq [g_2, G]$ or $[g_2, G] \leq [g_1, G]$ (see \cite{Burkett2}). Without loss of generality, suppose $[g_1, G] \leq [g_2, G]$. Then we have
	\[
	[g_1, G] \leq [g_2, G] 
	\implies \phi([g_1, G]) \leq \phi([g_2, G]) 
	\implies [h_1, H] \leq [h_2, H].
	\]
	Thus, $H$ is also a nested GVZ-group (see~\cite[Theorem~2]{Burkett2}). This completes the proof of Theorem~\ref{thm:isoGVZ}.
\end{proof}

It is well known that a non-abelian $p$-group $G$ of order at most $p^4$ is a nested GVZ-group if and only if the nilpotency class of $G$ is $2$. The groups of order $p^5$ for an odd prime $p$ were classified by James~\cite{RJ} based on isoclinism, and we adopt the notations and presentations of these groups as given in~\cite{RJ}. This classification yields $10$ isoclinism families, denoted by $\Phi_i$ for $1 \leq i \leq 10$. We now establish the proof of Corollary~\ref{cor:isoGVZp^5}, which provides a classification of all nested GVZ $p$-groups of order $p^5$. Before proceeding, we recall a fundamental result concerning GVZ-groups.

\begin{lemma}\cite[Theorem~B]{Burkett}\label{lemma:nilclassGVZ}
	If $G$ is a GVZ-group, then $\nil(G) \leq |\cd(G)|$.
\end{lemma}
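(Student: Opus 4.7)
The plan is to prove the bound $\nil(G)\le |\cd(G)|$ by induction on $|\cd(G)|$. If $|\cd(G)|=1$ then $G$ is abelian and $\nil(G)=1$, settling the base case. For the inductive step, let $G$ be a GVZ-group with $c:=\nil(G)\ge 2$, let $N$ be a central subgroup of $G$ of prime order contained in the last nontrivial term $\gamma_c(G)$ of the lower central series, and aim to deduce the bound from the corresponding bound for $G/N$.

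The first step is to observe that $G/N$ is again a GVZ-group. Every $\bar\chi\in\Irr(G/N)$ inflates to a character $\chi\in\Irr(G)$ with $N\le\ker(\chi)$; since $G$ is GVZ, $\chi$ vanishes off $Z(\chi)$, and deflation gives that $\bar\chi$ vanishes off $Z(\chi)/N$, which is the center of $\bar\chi$ in $G/N$. By the choice $N\le\gamma_c(G)$ we have $\nil(G/N)=c-1$, so the induction hypothesis yields $c-1\le |\cd(G/N)|$. The goal is therefore to show that $|\cd(G/N)|\le |\cd(G)|-1$, i.e.\ to exhibit at least one character degree of $G$ that does not appear in $\cd(G/N)$.

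The natural candidate is $\chi(1)$ for some $\chi\in\Irr(G)$ with $N\not\le\ker(\chi)$; such $\chi$ exists because $\bigcap_{\psi\in\Irr(G)}\ker(\psi)=1<N$. Since $G$ is GVZ, $\chi(1)^2=|G:Z(\chi)|$, and restricting $\chi$ to the central subgroup $N\cong C_p$ yields a nontrivial multiple of a faithful linear character of $N$. Using the description of conjugacy classes in a GVZ-group as $\cl_G(g)=g[g,G]$ (which gives that $N$ meets a nontrivial $[g,G]$ since $N\le \gamma_c(G)=[G,\gamma_{c-1}(G)]$), I would argue that any character of $G$ with $N\le\ker$ has its center strictly larger than $Z(\chi)$, forcing its degree to be strictly smaller than $\chi(1)$. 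This produces the required degree of $G$ absent from $\cd(G/N)$ and closes the induction via $c = 1+(c-1) \le 1+|\cd(G/N)| \le |\cd(G)|$.

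The main obstacle is the comparison of degrees in the last paragraph. In the nested GVZ case this step is immediate from the monotone chain of centers indexed by degrees, but for a general (possibly non-nested) GVZ-group, two distinct characters of the same degree can have different centers, so one cannot simply match a unique center to each degree. To push the argument through one must use the GVZ characterization more carefully: specifically, the fact that for every $\chi\in\Irr(G)$ and every $g\in G$, either $\chi(g)=0$ or $|\chi(g)|=\chi(1)$, combined with $Z(\chi)/\ker(\chi)\cong Z(G/\ker(\chi))$, should produce a strict gap in the set of degrees between those realized by $\Irr(G/N)$ and those realized by characters nontrivial on $N$. This last, somewhat delicate, degree-separation step is the technical heart of the proof and is where the GVZ hypothesis is used essentially.
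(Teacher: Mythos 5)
The paper does not actually prove this lemma: it is imported verbatim as Theorem~B of Burkett and Lewis \cite{Burkett}, so there is no internal argument to measure yours against. On its own terms, your induction skeleton (pass to $G/N$ for a central subgroup $N$ of prime order inside $\gamma_c(G)$, use $\nil(G/N)\ge c-1$, and try to show $|\cd(G/N)|\le|\cd(G)|-1$) is reasonable, and the easy pieces are fine: quotients of GVZ-groups are GVZ, and the class drops by at most one. (Your stronger claim $\nil(G/N)=c-1$ is false whenever $|\gamma_c(G)|>p$, since then $\gamma_c(G)N/N\neq 1$; but only the inequality $\nil(G/N)\ge c-1$ is needed, so this is harmless. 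You also silently use that GVZ-groups are nilpotent, which is itself a theorem rather than a definition.)

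The genuine gap is exactly the step you flag: proving $\cd(G/N)\subsetneq\cd(G)$. You do not prove it, and the mechanism you propose --- that any character of $G$ with $N\le\ker$ has center strictly larger than $Z(\chi)$ and hence degree strictly smaller than $\chi(1)$ --- is false. Take $G=E_1\times E_2$ with each $E_i$ extraspecial of order $p^3$ (a class-$2$, hence GVZ, group) and let $N$ be a diagonal subgroup of $Z(E_1)\times Z(E_2)=\gamma_2(G)$: a central subgroup of order $p$ inside the last nontrivial term of the lower central series. The character $\chi=\theta_1\times\lambda$ (with $\theta_1\in\nl(E_1)$ and $\lambda\in\lin(E_2)$) has degree $p$ and is nontrivial on $N$, whereas $\psi=\theta_1\times\theta_2$ for a suitably chosen $\theta_2\in\nl(E_2)$ is trivial on $N$, has degree $p^2>p$, and has center $Z(E_1)\times Z(E_2)$, which is strictly \emph{smaller} than $Z(\chi)=Z(E_1)\times E_2$. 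In this example $\cd(G/N)=\{1,p^2\}\subsetneq\{1,p,p^2\}=\cd(G)$ does hold, but the degree that disappears is the middle one, not the one your argument points at, and other choices of $N$ in the same group make a different degree disappear --- so no single uniform choice of $\chi$ (maximal degree, minimal degree, etc.) can be read off from your sketch. The degree-separation step therefore remains both unproved and unprovable by the route you describe; closing it requires the finer GVZ machinery (the conjugacy-class characterization $\cl_G(g)=g[g,G]$ and the behaviour of the subgroups $[g,G]$ along the central series) that Burkett and Lewis actually deploy.
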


\begin{proof}[Proof of Corollary~\ref{cor:isoGVZp^5}]
	Let $G$ be a non-abelian group of order $p^5$, where $p$ is an odd prime. Then $|\cd(G)|=2$ except when $G \in \Phi_{7} \cup \Phi_{8} \cup \Phi_{10}$ (see \cite[Subsection~4.1]{RJ}). Note that a nested GVZ $p$-group $G$ with $|\cd(G)|=2$ is, in fact, a VZ $p$-group. It is well known that a non-abelian $p$-group $G$ of order $p^5$ is a VZ-group if and only if $G \in \Phi_{2} \cup \Phi_{5}$.
	
	Next, suppose $G \in \Phi_{10}$. In this case, we have $\cd(G)=\{1, p, p^2\}$ and the nilpotency class of $G$, denoted by $\nil(G)$, is $4$. Thus, $|\cd(G)| < \nil(G)$. Therefore, by Lemma~\ref{lemma:nilclassGVZ}, $G$ is not a GVZ-group.
	
	Finally, suppose $G \in \Phi_{7} \cup \Phi_{8}$. Then we have $\cd(G)=\{1, p, p^2\}$, $Z(G)\cong C_p$, and
	\[	G/Z(G)\cong 
	\begin{cases}
		\Phi_{2}(1^4) & \text{if } G \in \Phi_{7}, \\
		\Phi_{2}(22) & \text{if } G \in \Phi_{8}
	\end{cases}
	\]
	(see \cite[Subsection~4.1]{RJ}). Note that there is a bijection between the sets $\Irr_{p}(G)$ and $\nl(G/Z(G))$. Let $\chi \in \Irr_{p}(G)$ be the lift of $\bar{\chi} \in \nl(G/Z(G))$. Observe that $G/Z(G)$ is a VZ-group, and hence $\bar{\chi} \in \nl(G/Z(G))$ is of central type. This implies that $\chi \in \Irr_{p}(G)$ is of central type.
	
	Furthermore, the pair $(G, Z(G))$ is a Camina pair (see \cite[Lemmas~5.5 and~5.6]{SKP}). Therefore, there is a bijection between the sets $\Irr_{p^2}(G)$ and $\Irr(Z(G)) \setminus \{1_{Z(G)}\}$, where $1_{Z(G)}$ denotes the trivial character of $Z(G)$. Moreover, for $1_{Z(G)} \neq \mu \in \Irr(Z(G))$, the corresponding $\chi_\mu \in \Irr_{p^2}(G)$ is given by
	\[
	\chi_\mu(g) = 
	\begin{cases}
		p^2 \mu(g) & \text{if } g \in Z(G),\\
		0          & \text{otherwise.}
	\end{cases}
	\]
	Thus, $Z(\chi_\mu)=Z(G)$. This implies that 
	\[
	|G/Z(\chi_\mu)|^{\frac{1}{2}}=p^2=\chi_\mu(1).
	\]
	Therefore, $\chi_\mu \in \Irr_{p^2}(G)$ is of central type. Moreover, notice that $G$ is strictly nested by degrees. Hence, $G \in \Phi_{7} \cup \Phi_{8}$ is a nested GVZ-group. This completes the proof of Corollary~\ref{cor:isoGVZp^5}.
\end{proof}

Recall that a nilpotent group of class~$2$ is a GVZ-group (see~\cite[Theorem~2.31]{I}). 
The groups of order $p^5$ in the isoclinic family $\Phi_{4}$ have nilpotency class~$2$ (see~\cite[Subsection~4.1]{RJ}). Hence, these groups are GVZ-groups that are not nested GVZ-groups. We now prove Theorem~\ref{thm:WedderburnGVZp^5}, which gives the Wedderburn decomposition of rational group algebras for all nested GVZ $p$-groups of order at most $p^5$.
\begin{theorem}\label{thm:WedderburnGVZp^5}
	Let $G$ be a non-abelian nested GVZ $p$-group of order $\leq p^5$, where $p$ is an odd prime. Then $\nil(G) \in \{2,3\}$. Moreover, we have the following.
	\begin{enumerate}
		\item {\bf Case ($\nil(G)=2$).} In this case, let $p^m$ and $p^n$ denote the exponents of $G/G'$ and $Z(G)$, respectively. Then the Wedderburn decomposition of $\mathbb{Q}G$ is given by
		$$\mathbb{Q}G \cong \bigoplus_{\lambda=0}^m a_{p^\lambda}\mathbb{Q}(\zeta_{p^\lambda}) \bigoplus_{\lambda=1}^{n}(b_{p^\lambda}-c_{p^\lambda})M_{|G/Z(G)|^{1/2}}\!\left(\mathbb{Q}(\zeta_{p^\lambda})\right),$$
		where $a_{p^\lambda}$, $b_{p^\lambda}$ and $c_{p^\lambda}$ are the number of cyclic subgroups of order $p^\lambda$ of $G/G'$, $Z(G)$ and $Z(G)/G'$, respectively.
		
		\item {\bf Case ($\nil(G)=3$).} In this case, let $p^m$ denote the exponent of $G/G'$. Then the Wedderburn decomposition of $\mathbb{Q}G$ is given by
		\[\mathbb{Q}G \cong \bigoplus_{\lambda=0}^m a_{p^\lambda}\mathbb{Q}(\zeta_{p^\lambda}) \bigoplus p M_p(\mathbb{Q}(\zeta_p)) \bigoplus M_{p^2}(\mathbb{Q}(\zeta_p)),\]
		where $a_{p^\lambda}$ is the number of cyclic subgroups of order $p^\lambda$ of $G/G'$.
	\end{enumerate}
\end{theorem}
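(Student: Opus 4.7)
The strategy is to reduce the result to two previously established decomposition theorems, splitting on the nilpotency class. By Corollary~\ref{cor:isoGVZp^5} together with the paragraph immediately preceding Theorem~\ref{thm:WedderburnGVZp^5}, a non-abelian nested GVZ $p$-group of order $\leq p^5$ either is a VZ-group (which forces $\nil(G)=2$ by~\cite{FM}) or lies in $\Phi_7 \cup \Phi_8$, and the members of the latter two families have order $p^5$ and class~$3$ (see~\cite{RJ}). This immediately justifies $\nil(G)\in\{2,3\}$ and identifies Case~(1) with the VZ setting and Case~(2) with $G\in\Phi_7\cup\Phi_8$.

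For Case~(1), I would apply Theorem~\ref{lemma:Wedderburn VZ} and then reconcile notation. The abelian summand $\mathbb{Q}(G/G')$ unfolds by Theorem~\ref{Perlis-walker} as $\bigoplus_{\lambda=0}^{m} a_{p^\lambda}\mathbb{Q}(\zeta_{p^\lambda})$. The two matrix-algebra sums in Theorem~\ref{lemma:Wedderburn VZ} (one ranging over $d\mid m$ with $d\nmid m'$, and one over $d\mid m$ with $d\mid m'$) can be merged into the single sum $\bigoplus_{\lambda=1}^{n}(b_{p^\lambda}-c_{p^\lambda})M_{|G/Z(G)|^{1/2}}(\mathbb{Q}(\zeta_{p^\lambda}))$ by observing that $c_{p^\lambda}=0$ whenever $p^\lambda\nmid m'$, so that $a_{p^\lambda}=b_{p^\lambda}=b_{p^\lambda}-c_{p^\lambda}$ on the first index set. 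This step is pure bookkeeping.

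For Case~(2), I would apply Theorem~\ref{thm:WedderburnGVZ} with $\cd(G)=\{1,p,p^2\}$, so $n=2$ and $\delta_1=1, \delta_2=2$. Using the analysis from the proof of Corollary~\ref{cor:isoGVZp^5}, one has $Z(G)\cong C_p$ and $(G,Z(G))$ is a Camina pair; hence for $\chi\in\Irr_{p^2}(G)$ the quasi-kernel is $Z_{\delta_2}=Z(G)$ with $[Z_{\delta_2},G]=1$, so $m_2=p$ and $m_2'=1$, producing the single term $M_{p^2}(\mathbb{Q}(\zeta_p))$. For $\chi\in\Irr_p(G)$ I would lift $\chi$ from a nonlinear character of the VZ quotient $G/Z(G)$ of class~$2$, whose quasi-kernel is $Z(G/Z(G))=Z_2(G)/Z(G)$, giving $Z_{\delta_1}=Z_2(G)$ with $|Z_2(G)|=p^3$; since $\gamma_3(G)\neq 1$ and $\gamma_3(G)\subseteq [Z_2(G),G]\subseteq Z(G)\cong C_p$, necessarily $[Z_{\delta_1},G]=Z(G)$. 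Reading off $|G'|=p^2$ and $Z(G/Z(G))\cong C_p\times C_p$ from the presentations of $\Phi_2(1^4)$ and $\Phi_2(22)$ in~\cite{RJ}, one obtains $Z_{\delta_1}/[Z_{\delta_1},G]\cong C_p\times C_p$ and $Z_{\delta_1}/[Z_{\delta_0},G]=Z_2(G)/G'\cong C_p$, yielding $a_p-a'_p=(p+1)-1=p$ at level $r=1$. Combining with the Perlis--Walker expansion of $\mathbb{Q}(G/G')$ gives exactly the stated formula.

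The main obstacle is the uniformity required in Case~(2): one must verify that $Z_2(G)/Z(G)$ has exponent $p$ (i.e., is elementary abelian of rank two) across both isoclinism families $\Phi_7$ and $\Phi_8$, which amounts to confirming the same for the centers of $\Phi_2(1^4)$ and $\Phi_2(22)$. There is no abstract shortcut for this; it requires direct inspection of the explicit James presentations. Every other step is either a direct invocation of Theorems~\ref{lemma:Wedderburn VZ} or~\ref{thm:WedderburnGVZ}, or a routine application of Lemma~\ref{lemma:CyclicCounting} to cyclic and elementary abelian factors.
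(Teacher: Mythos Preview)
Your proposal is correct and follows essentially the same route as the paper: both reduce Case~(1) to Theorem~\ref{lemma:Wedderburn VZ} via the observation that a nested GVZ $p$-group of order $\leq p^5$ with class~$2$ is VZ, and both handle Case~(2) by applying Theorem~\ref{thm:WedderburnGVZ} after identifying $Z_{\delta_1}=Z_2(G)$, $Z_{\delta_2}=Z(G)$, $[Z_{\delta_1},G]=Z(G)$, and reading off $Z_2(G)/Z(G)\cong C_p\times C_p$ and $Z_2(G)/G'\cong C_p$ from the James presentations of $\Phi_2(1^4)$ and $\Phi_2(22)$. The only cosmetic difference is that the paper deduces $\nil(G)\in\{2,3\}$ directly from $\cd(G)\subseteq\{1,p,p^2\}$ and Lemma~\ref{lemma:nilclassGVZ}, whereas you invoke the classification in Corollary~\ref{cor:isoGVZp^5}; either argument works.
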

\begin{proof}
	Let $G$ be a non-abelian nested GVZ $p$-group of order at most $p^{5}$, where $p$ is an odd prime. Then $\cd(G) \subseteq \{1, p, p^{2}\}$. Hence, by Lemma~\ref{lemma:nilclassGVZ}, we obtain that $\nil(G) \in \{2,3\}$. 
	\begin{enumerate}
		\item First, observe that every non-abelian nested GVZ $p$-group of order at most $p^{5}$ with nilpotency class $2$ is a VZ-group. Therefore, by Theorems~\ref{Perlis-walker} and~\ref{lemma:Wedderburn VZ}, the result follows in this case.
		
		\item Now suppose that $G$ is a nested GVZ-group of nilpotency class $3$. Then necessarily $|G| = p^{5}$ and $G \in \Phi_{7} \cup \Phi_{8}$ (see the proof of Corollary~\ref{cor:isoGVZp^5}). For $i \in \{0,1,2\}$, let $Z_i = Z(\chi)$ for some $\chi \in \Irr_{p^{i}}(G)$. In addition, let $N$ be a normal subgroup of $G$ such that
		$$\{e\} < Z(G) < N < G$$
		is part of the upper central series, where $e$ denotes the identity element of $G$. Since $G/N = Z(G/N)$, the quotient $G/N$ is abelian, which implies $G' \subseteq N$. Moreover, we have $N/Z(G) = Z(G/Z(G))$. By \cite[Subsection~4.1]{RJ},
		$$
		G/Z(G) \cong 
		\begin{cases}
			\Phi_{2}(1^{4}) & \text{if } G \in \Phi_{7}, \\
			\Phi_{2}(22) & \text{if } G \in \Phi_{8}.
		\end{cases}
		$$
		There is a natural bijection between $\Irr_{p}(G)$ and $\nl(G/Z(G))$. Since $G/Z(G)$ is a VZ-group, it follows that $Z_{1} = N$. Next, $Z_{2} = Z(G)$ as $\cd(G) = \{1,p,p^{2}\}$. Finally, $Z_{0} = G$.
		
		From $Z(G/Z(G)) = N/Z(G)$, we deduce that $[N,G] = Z(G)$. Therefore,
		$$[Z_{0}, G] = G', \quad [Z_{1}, G] = Z(G), \quad [Z_{3}, G] = \{e\}.$$
		
		Next, note that
		$$Z_{1}/[Z_{1}, G] = N/Z(G) = Z(G/Z(G)) \cong C_{p} \times C_{p}$$
		(see \cite[Subsections~4.1 and 4.5]{RJ}). Furthermore, $|N| = p^{3}$ and $|G'| = p^{2}$. Hence, we get
		$$Z_{1}/[Z_{0}, G] = N/G' \cong C_{p}.$$
		
		Thus, by Theorems~\ref{Perlis-walker} and~\ref{thm:WedderburnGVZ}, the result follows.
	\end{enumerate}
	This completes the proof of Theorem~\ref{thm:WedderburnGVZp^5}.
\end{proof}

	\section{Primitive central idempotents}\label{sec:pci}
		
	In this section, we begin by recalling some basic notions concerning primitive central idempotents in rational group algebras, and then proceed to the proof of Theorem \ref{thm:pciGVZ}.  
	
	Let $G$ be a finite group. An element $e \in \mathbb{Q}G$ is called an \emph{idempotent} if $e^2=e$. A \emph{primitive central idempotent} in $\mathbb{Q}G$ is an idempotent that lies in the center of $\mathbb{Q}G$ and cannot be decomposed into a sum of two nonzero orthogonal idempotents, i.e., there do not exist $e', e'' \in \mathbb{Q}G$ such that $e = e' + e''$ and $e'e''=0$.  
	
	It is a standard fact that the collection of primitive central idempotents of $\mathbb{Q}G$ determines the Wedderburn decomposition of $\mathbb{Q}G$ into simple components. More precisely, if $e$ is a primitive central idempotent of $\mathbb{Q}G$, then $\mathbb{Q}Ge$ is a simple algebra. For $\chi \in \Irr(G)$, the element  
	\[
	e(\chi) := \frac{\chi(1)}{|G|} \sum_{g \in G} \chi(g) g^{-1}
	\]
	is a primitive central idempotent of $\mathbb{C}G$, and the set $\{e(\chi) : \chi \in \Irr(G)\}$ forms a complete set of primitive central idempotents of $\mathbb{C}G$. Moreover, for $\chi \in \Irr(G)$ one defines  
	\[
	e_{\mathbb{Q}}(\chi) := \sum_{\sigma \in \operatorname{Gal}(\mathbb{Q}(\chi)/\mathbb{Q})} e(\chi^{\sigma}),
	\]
	which gives a primitive central idempotent of $\mathbb{Q}G$.  
	
	For a subset $X \subseteq G$, we write  
	\[
	\widehat{X} := \frac{1}{|X|} \sum_{x \in X} x \in \mathbb{Q}G.
	\]
	If $N \trianglelefteq G$, define  
	\[
	\epsilon(G,N) :=
	\begin{cases}
		\widehat{G} & \text{if } N=G, \\[0.3em]
		\prod_{D/N \in M(G/N)} (\widehat{N}-\widehat{D}) & \text{otherwise},
	\end{cases}
	\]
	where $M(G/N)$ denotes the set of minimal nontrivial normal subgroups $D/N$ of $G/N$, with $D$ a subgroup of $G$ containing $N$.  
	
	In what follows, we determine a full set of primitive central idempotents of the rational group algebra of a nested GVZ $p$-group, where $p$ is an odd prime. We begin with a general lemma.
	
	\begin{lemma}\cite[Lemma 3.3.2]{JR}\label{lemma:pcilin}
		Let $G$ be a finite group and $\chi \in \lin(G)$ with kernel $N=\ker(\chi)$. Then 
		\begin{enumerate}
			\item $e_{\mathbb{Q}}(\chi)=\epsilon(G,N)$;
			\item $\mathbb{Q}G\epsilon(G,N) \cong \mathbb{Q}(\zeta_{|G/N|})$.
		\end{enumerate}
	\end{lemma}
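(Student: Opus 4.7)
The plan is to exploit the GVZ structure of $\chi$ to reduce the computation of $e_{\mathbb{Q}}(\chi)$ to the computation of a primitive central idempotent in the subgroup algebra $\mathbb{Q}Z(\chi)$, where Lemma \ref{lemma:pcilin} applies directly.

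First, I would invoke Lemma \ref{lemma:GVZcharacter} to write $\chi = \chi_\mu$, where $\mu$ is the lift to $Z(\chi)$ of a faithful linear character $\bar{\mu}$ of $Z(\chi)/[Z(\chi), G]$; in particular $\chi\!\downarrow_{Z(\chi)} = \chi(1)\mu$ and $\ker(\mu) = \ker(\chi) = N$. Since $\chi$ vanishes off $Z(\chi)$ and $\chi(1)^2 = |G/Z(\chi)|$, an elementary rearrangement of $e(\chi) = \frac{\chi(1)}{|G|}\sum_{g \in G}\chi(g^{-1})g$ collapses to
\[
e(\chi) \;=\; \frac{\chi(1)^2}{|G|}\sum_{g\in Z(\chi)}\mu(g^{-1})g \;=\; \frac{1}{|Z(\chi)|}\sum_{g\in Z(\chi)}\mu(g^{-1})g \;=\; e_{Z(\chi)}(\mu),
\]
where the right-hand side is the primitive central idempotent of $\mathbb{C}Z(\chi)$ attached to $\mu$, embedded naturally in $\mathbb{C}G$. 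The same manipulation applied to any Galois conjugate $\chi^{\sigma}$ gives $e(\chi^{\sigma}) = e_{Z(\chi)}(\mu^{\sigma})$, because $Z(\chi^{\sigma}) = Z(\chi)$ and $\chi^{\sigma}\!\downarrow_{Z(\chi)} = \chi(1)\mu^{\sigma}$.

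Next, I would observe that $\mathbb{Q}(\chi) = \mathbb{Q}(\mu)$, since the nonzero values of $\chi$ are exactly $\chi(1)$ times the values of $\mu$. Hence $\gal(\mathbb{Q}(\chi)/\mathbb{Q}) = \gal(\mathbb{Q}(\mu)/\mathbb{Q})$, and summing over this Galois orbit yields
\[
e_{\mathbb{Q}}(\chi) \;=\; \sum_{\sigma}e(\chi^{\sigma}) \;=\; \sum_{\sigma}e_{Z(\chi)}(\mu^{\sigma}) \;=\; e_{\mathbb{Q}}(\mu),
\]
the last expression being computed inside $\mathbb{Q}Z(\chi)$. Since $\mu \in \lin(Z(\chi))$ with $\ker(\mu) = N$, Lemma \ref{lemma:pcilin} applied to $(Z(\chi), N)$ gives $e_{\mathbb{Q}}(\mu) = \epsilon(Z(\chi), N)$, which establishes part (1).

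For part (2), I would note that $\mu$ is a faithful linear character of the abelian quotient $Z(\chi)/N$, which forces $Z(\chi)/N$ to be cyclic and gives $\mathbb{Q}(\chi) = \mathbb{Q}(\mu) = \mathbb{Q}(\zeta_{|Z(\chi)/N|})$. Combining Lemma \ref{Reiner} with $m_{\mathbb{Q}}(\chi) = 1$ from Lemma \ref{lemma:schurindexpgroup} (as in the proof of Theorem \ref{thm:WedderburnGVZ}), the simple component $\mathbb{Q}G\,e_{\mathbb{Q}}(\chi)$ is isomorphic to $M_{\chi(1)}(\mathbb{Q}(\chi))$, and the central-type condition gives $\chi(1) = |G/Z(\chi)|^{1/2}$, so
\[
\mathbb{Q}G\,\epsilon(Z(\chi), N) \;=\; \mathbb{Q}G\,e_{\mathbb{Q}}(\chi) \;\cong\; M_{|G/Z(\chi)|^{1/2}}\!\left(\mathbb{Q}(\zeta_{|Z(\chi)/N|})\right).
\]

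The main obstacle is ensuring that Galois conjugation on $\chi$ corresponds cleanly to Galois conjugation on $\mu$ under the bijection of Lemma \ref{lemma:GVZcharacter}, so that the sum of $e(\chi^{\sigma})$ over $\gal(\mathbb{Q}(\chi)/\mathbb{Q})$ really coincides with $e_{\mathbb{Q}}(\mu)$. This hinges on the identity $\mathbb{Q}(\chi) = \mathbb{Q}(\mu)$, which follows immediately from the character values but must be recorded explicitly in order to justify reindexing the Galois sum.
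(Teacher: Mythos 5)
Your proposal does not prove the statement in question. Lemma \ref{lemma:pcilin} concerns a \emph{linear} character $\chi$ of an \emph{arbitrary} finite group $G$, and asserts $e_{\mathbb{Q}}(\chi)=\epsilon(G,N)$ and $\mathbb{Q}G\epsilon(G,N)\cong\mathbb{Q}(\zeta_{|G/N|})$. What you have written is an argument for Theorem \ref{thm:pciGVZ} --- the statement about a \emph{nonlinear} character of a nested GVZ $p$-group, with conclusion $e_{\mathbb{Q}}(\chi)=\epsilon(Z(\chi),N)$ and $\mathbb{Q}G\epsilon(Z(\chi),N)\cong M_{|G/Z(\chi)|^{1/2}}(\mathbb{Q}(\zeta_{|Z(\chi)/N|}))$. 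Worse, in the middle of your argument you explicitly invoke ``Lemma \ref{lemma:pcilin} applied to $(Z(\chi),N)$'' to obtain $e_{\mathbb{Q}}(\mu)=\epsilon(Z(\chi),N)$. As a proof of Lemma \ref{lemma:pcilin} this is circular: the target statement is used as an ingredient, and nothing in the proposal addresses it independently.

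A genuine proof of Lemma \ref{lemma:pcilin} has to work at the level of a linear character of a general finite group: since $G/N$ embeds in $\mathbb{C}^{\times}$ it is cyclic; one checks that $\widehat{N}\,e(\chi^{\sigma})=e(\chi^{\sigma})$ while $\widehat{D}\,e(\chi^{\sigma})=0$ for every $D$ with $N\lneq D\trianglelefteq G$ (because $\chi^{\sigma}$ is nontrivial on $D$), and that the Galois orbit of $\chi$ consists exactly of the linear characters of $G$ with kernel $N$, whence $\sum_{\sigma}e(\chi^{\sigma})=\prod_{D/N\in M(G/N)}(\widehat{N}-\widehat{D})=\epsilon(G,N)$; part (2) then follows by identifying $\mathbb{Q}G\epsilon(G,N)$ with the corresponding simple component of $\mathbb{Q}(G/N)$ for the cyclic group $G/N$. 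None of this appears in your write-up. For what it is worth, the paper itself does not prove this lemma either --- it is quoted from \cite[Lemma 3.3.2]{JR} --- and your argument essentially reproduces the paper's proof of Theorem \ref{thm:pciGVZ} (including the reduction $e_{\mathbb{Q}}(\chi)=e_{\mathbb{Q}}(\mu)$ via $\mathbb{Q}(\chi)=\mathbb{Q}(\mu)$), so the content is not wasted; it is simply attached to the wrong statement.
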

	
	We are now ready to establish Theorem \ref{thm:pciGVZ}.  
	
	\begin{proof}[Proof of Theorem \ref{thm:pciGVZ}]
		Let $G$ be a finite nested GVZ $p$-group, with $p$ an odd prime. Suppose $\cd(G) = \{p^{\delta_i} : 0 \leq i \leq n,\; 0=\delta_0 < \delta_1 < \cdots < \delta_n\}$, and define $Z_{\delta_i} := Z(\chi)$ for some $\chi \in \Irr_{p^{\delta_i}}(G)$.  
		
		Take $\chi \in \nl(G)$ with $\chi(1)=p^{\delta_r}$ for some $r \in \{1,2,\dots,n\}$. By Lemma~\ref{lemma:GVZcharacter}, there exists $\mu \in \lin(Z_{\delta_r})$ such that  
		\[
		\chi = \chi_\mu(g) = 
		\begin{cases}
			p^{\delta_r}\mu(g) & \text{if } g \in Z(\chi); \\[0.3em]
			0 & \text{otherwise},
		\end{cases}
		\]
		where $\mu$ is the lift of $\bar{\mu}\in \Irr(Z_{\delta_r}/[Z_{\delta_r},G]\mid [Z_{\delta_{r-1}},G]/[Z_{\delta_r},G])$ to $Z_{\delta_r}$.  
		
		Let $N=\ker(\chi)$. Then $N=\ker(\chi_\mu)=\ker(\mu)$. Since $e(\chi)=e(\chi_\mu)=e(\mu)$, we compute
		\begin{align*}
			e_{\mathbb{Q}}(\chi) &= e_{\mathbb{Q}}(\chi_\mu) \\
			&= \sum_{\sigma \in \operatorname{Gal}(\mathbb{Q}(\chi_\mu)/\mathbb{Q})} e(\chi_\mu^{\sigma}) \\
			&= \sum_{\sigma \in \operatorname{Gal}(\mathbb{Q}(\mu)/\mathbb{Q})} e(\chi_{\mu^{\sigma}}) \\
			&= \sum_{\sigma \in \operatorname{Gal}(\mathbb{Q}(\mu)/\mathbb{Q})} e(\mu^{\sigma}) \\
			&= e_{\mathbb{Q}}(\mu) \\
			&= \epsilon(Z(\chi),N),
		\end{align*}
		using Lemma~\ref{lemma:pcilin}.  
		
		It is known that $G$ admits an irreducible character $\chi$ with $\chi(1) = |G/Z(\chi)|^{1/2}$ if and only if $\chi(g)=0$ for all $g \in G\setminus Z(\chi)$ (see \cite[Corollary 2.30]{I}). Thus, $\chi(1) = \chi_\mu(1) = p^{\delta_r} = |G/Z(\chi)|^{1/2}$. By Theorem~\ref{thm:WedderburnGVZ}, the simple component of $\mathbb{Q}G$ corresponding to $e_{\mathbb{Q}}(\chi)$ is  
		\[
		\mathbb{Q}G e_{\mathbb{Q}}(\chi) = \mathbb{Q}G \epsilon(Z(\chi),N) \;\cong\; M_{|G/Z(\chi)|^{1/2}} \!\left(\mathbb{Q}(\zeta_{|Z(\chi)/N|})\right).
		\]
		This completes the proof of Theorem \ref{thm:pciGVZ}.
	\end{proof}


\end{document}